\newtheorem{Teo}{Theorem}[section]
\newtheorem{Prop}[Teo]{Proposition}
\newtheorem{Claim}[Teo]{Claim}
\theoremstyle{definition}
\newtheorem{Def}[Teo]{Definition}
\newtheorem{Obs}[Teo]{Remark}
\newtheorem{Que}[Teo]{Question}
\newtheorem{Exa}[Teo]{Example}
\newcommand{\Q}{\mathbb{Q}}
\newcommand{\N}{\mathbb{N}}
\newcommand{\F}{\mathbb{F}}
\newcommand{\Llr}{\Longleftrightarrow}
\newcommand{\lra}{\longrightarrow}
\newcommand{\Lra}{\Longrightarrow}
\newcommand{\SU}{\mbox{\rm supp}}
\begin{document}
\title{Extensions of a valuation from $K$ to $K[x]$}
\author{Josnei Novacoski}
\thanks{During part of the realization of this project the author was supported by a research grant from Funda\c c\~ao de Amparo \`a Pesquisa do Estado de S\~ao Paulo (process number 2017/17835-9).}

\begin{abstract}
In this paper we give an introduction on how one can extend a valuation from a field $K$ to the polynomial ring $K[x]$ in one variable over $K$. This follows a similar line as the one presented by the author in his talk at ALaNT 5. We will discuss the objects that have been introduced to describe such extensions. We will focus on key polynomials, pseudo-convergent sequences and minimal pairs. Key polynomials have been introduced and used by various authors in different ways. We discuss these works and the relation between them. We also discuss a recent version of key polynomials developed by Spivakovsky. This version provides some advantages, that will be discussed in this paper. For instance, it allows us to relate key polynomials, in an explicit way, to pseudo-convergent sequences and minimal pairs. This paper also provides examples that ilustrate these objects and their properties. Our main goal when studying key polynomials is to obtain more accurate results on the problem of local uniformization. This problem, which is still open in positive characteristic, was the main topic of the paper of the author and Spivakovsky in the proceedings of ALaNT 3.
\end{abstract}

\keywords{Valuations, Key polynomials, Pseudo-convergent sequences, Minimal pairs}
\subjclass[2010]{Primary 13A18}
\maketitle

\section{Introduction}
If $\nu_0$ is a valuation on a field $K$, what are the possible extensions $\nu$ of $\nu_0$ to $K[x]$? This question has been extensively studied and many objects have been introduced to describe such extensions. Three of the more relevant are key polynomials, pseudo-convergent sequences and minimal pairs. The main goal of this paper is to describe these objects and present the relation between them.

Throughout this paper, we will fix the following notations and assumptions:
\begin{equation}                           \label{sit}
\left\{\begin{array}{ll}
K & \mbox{is a field,}\\
\overline K & \mbox{is a fixed algebraic closure of }K,\\
K[x] & \mbox{is the ring of polynomials with one indeterminate over }K,\\
\nu & \mbox{is a valuation on }K[x],\\
\mu  &\mbox{is an extension of }\nu\mbox{ to }\overline{K}[x].\\
\end{array}\right.
\end{equation}

We start by defining key polynomials. These objects were introduced by MacLane in \cite{Mac} and refined by Vaqui\'e in \cite{Vaq}. The definition that we present here is slightly different and is due to Spivakovsky. The basic properties of Spivakovsky's key polynomials were developed in \cite{f} and will be sumarized in Section \ref{spivkegpplo}. In Section \ref{secpolchave} we will discuss the MacLane-Vaqui\'e key polynomials and in Section \ref{spivkegpplo} we discuss how they are related to Spivakovsky's key polynomials.

For a positive integer $b$, let
\[
\partial_b:=\frac{1}{b!}\frac{\partial^b}{\partial x^b}.
\]
For a polynomial $f\in K[x]$ let
\[
\epsilon(f)=\max_{b\in \N}\left\{\frac{\nu(f)-\nu(\partial_bf)}{b}\right\}.
\]
A monic polynomial $Q\in K[x]$ is said to be a (Spivakovsky's) \textbf{key polynomial} for $\nu$ if for every $f\in K[x]$,
\[
\epsilon(f)\geq \epsilon(Q)\Lra\deg(f)\geq\deg(Q).
\]

In \cite{Kap}, Kaplansky introduced the concept of pseudo-convergent sequences. For a valued $(K,\nu)$, a \textbf{pseudo-convergent sequence} is a well-ordered subset $\{a_{\rho}\}_{\rho<\lambda}$ of $K$, without last element, such that
\[
\nu(a_\sigma-a_\rho)<\nu(a_\tau-a_\sigma)\mbox{ for all }\rho<\sigma<\tau<\lambda.
\]
Let $R$ be a ring with $K\subseteq R$ and consider an extension of $\nu$ to $R$, which we call again $\nu$. An element $a\in R$ is said to be a limit of $\{a_\rho\}_{\rho<\lambda}\subseteq K$ if for every $\rho<\lambda$ we have $\nu(a-a_\rho)=\nu(a_{\rho+1}-a_\rho)$.

One of the main goals of \cite{f} is to compare key polynomials and pseudo-convergent sequences. These results are presented in Section \ref{compasraccai}.

Another theory that has been developed to study extensions of a given valuation to the the ring of polynomials in one variable is the theory of minimal pairs of definition of a valuation (see \cite{Kand1}). A \textbf{minimal pair for $\nu$} is a pair $(a,\delta)\in\overline K \times \mu(\overline K[x])$ such that for every $b\in\overline K$
\[
\mu(b-a)\geq \delta\Lra [K(b):K]\geq [K(a):K].
\]
If in addition,
\[
\mu(x-a)=\delta\geq \mu(x-b)
\]
for every $b\in \overline K$, then $(a,\delta)$ is called a \textbf{minimal pair of definition for $\nu$}.

The main goal of \cite{g} is to compare key polynomials and minimal pairs. These relations will be presented in Section \ref{compasraccai}.

For a valued field $(K,\nu)$ we denote by $K\nu$ the residue field and by $\nu K$ the value group of $\nu$, respectively. A valuation $\nu$ on $K[x]$ is called \textbf{valuation-algebraic} if $\nu(K(x))/\nu K$ is a torsion group and $K(x)\nu\mid K\nu$ is an algebraic extension. Otherwise, it is called \textbf{valuation-transcendental}. If $\nu$ is valuation-transcendental, then it is \textbf{residue-trascendental} if $K(x)\nu\mid K\nu$ is a transcendental extension and \textbf{value-transcendetal} if $\nu(K(x))/\nu K$ is not a torsion group.

Given two polynomials $f,q\in K[x]$ with $q$ monic, we call the \textbf{$q$-expansion of $f$} the expression
\[
f(x)=f_0(x)+f_1(x)q(x)+\ldots+f_n(x)q^n(x)
\]
where for each $i$, $0\leq i\leq n$, $f_i=0$ or $\deg(f_i)<\deg(q)$. For a polynomial $q(x)\in K[x]$, the \textbf{$q$-truncation of $\nu$} is defined as
\[
\nu_q(f):=\min_{0\leq i\leq n}\{\nu(f_iq^i)\}
\]
where $f=f_0+f_1q+\ldots+f_nq^n$ is the $q$-expansion of $f$.

We point out that the original definition of minimal pairs, presented in \cite{Kand1}, is slightly different than the one appearing here. The reason is because, with the original definition, one can prove that a valuation on $K[x]$ admits a pair of definition if and only if it is residue-transcendental. On the other hand, from the results in \cite{f}, one can prove that an extension admits a minimal pair of definition (as presented here) if and only if it is valuation-transcendental. Hence, with our definition we are considering all the valuations which are somehow simpler to handle. This result will follow from the following:
 
\begin{Teo}[Theorem 1.3 of \cite{g}]\label{Thmsobretruncvalutrans}
A valuation $\nu$ on $K[x]$ is valuation-transcendental if and only if there exists a polynomial $q\in K[x]$ such that $\nu=\nu_q$.
\end{Teo}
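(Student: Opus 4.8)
The plan is to prove the two implications separately. Throughout, I would extend $\nu$ to $\mu$ on $\overline K[x]$ as in \eqref{sit} and keep track, for each candidate monic $q$, of the value group $\nu_q K(x)$ and the residue field $K(x)\nu_q$. Note first that for any monic $q\in K[x]$ and any $f\in K[x]$ one has $\nu_q(f)\le\nu(f)$, immediately from the ultrametric inequality applied to the $q$-expansion $f=\sum_i f_iq^i$; hence the real content of an identity $\nu=\nu_q$ is that for every $f$ no cancellation occurs among the terms of least value in the $q$-expansion of $f$.

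For the implication ``$\nu=\nu_q\Rightarrow\nu$ valuation-transcendental'', I would take $q$ of minimal degree $d$ with $\nu=\nu_q$ and induct on $d$. If $d=1$, write $q=x-a$ with $a\in K$ and put $\gamma=\nu(x-a)$; the $(x-a)$-expansion has pairwise distinct powers of $(x-a)$, so there is genuinely no cancellation, $\nu$ is the monomial valuation determined by $\gamma$, and a direct computation shows that either $\gamma$ is non-torsion modulo $\nu K$ (so $\nu$ is value-transcendental) or, writing $e$ for the least positive integer with $e\gamma\in\nu K$ and choosing $b\in K$ with $\nu(b)=e\gamma$, the residue of $(x-a)^e/b$ is transcendental over $K\nu$ (so $\nu$ is residue-transcendental). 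For $d>1$, minimality of $d$ (together with the relation between $\epsilon$ and truncations recorded in Section \ref{spivkegpplo}) forces $q$ to be a key polynomial over the truncation $\nu':=\nu_{q_0}$ at a suitable polynomial $q_0$ of smaller degree, so that $\nu$ is the MacLane--Vaqui\'e augmented valuation $[\nu';\,\nu(q)=\gamma]$; by induction $\nu'$ is already valuation-transcendental, and the standard formulas for the value group and residue field of an augmented valuation relative to its base (in particular Vaqui\'e's description of $K(x)\nu$) show that $\nu$ inherits this property.

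For the converse, ``$\nu$ valuation-transcendental $\Rightarrow\nu=\nu_q$ for some $q$'', the plan is to produce $q$ as a key polynomial of maximal degree. Using the theory of Spivakovsky's key polynomials summarized in Section \ref{spivkegpplo}, one constructs a well-ordered family $Q_1,Q_2,\dots$ of key polynomials of strictly increasing degrees whose truncations $\nu_{Q_1}<\nu_{Q_2}<\cdots$ approximate $\nu$ from below and which is complete, in the sense that $\nu$ is recovered from the $\nu_{Q_i}$. The key point is then a dichotomy: this family can fail to be finite only by calling for a ``limit'' key polynomial, and—via the comparison with pseudo-convergent sequences of Section \ref{compasraccai}—a limit key polynomial reflects a pseudo-convergent sequence of algebraic type, hence an algebraic residue extension together with torsion value-group growth, i.e.\ it forces $\nu$ to be valuation-algebraic. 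Since we are assuming $\nu$ is valuation-transcendental, the family is finite, it has a member $Q$ of maximal degree, and completeness gives $\nu=\nu_Q$; we take $q=Q$.

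The step I expect to be the main obstacle is precisely this last dichotomy: showing that in the valuation-transcendental case the inductive construction of key polynomials halts after finitely many steps, i.e.\ that unbounded degrees of key polynomials are incompatible with $\nu$ being valuation-transcendental. Equivalently, one must upgrade the trivial inequality $\nu_q\le\nu$ to an equality for a well-chosen $q$, and this is exactly where the minimality built into the definition of a key polynomial—and, at the level of Section \ref{spivkegpplo}, the precise behavior of the function $\epsilon$—has to be used decisively, rather than merely the formal properties of $q$-expansions. The bookkeeping of value groups and residue fields through successive augmentations, needed both for the converse implication and for the inductive step of the direct one, is routine in principle but is where most of the technical input imported from \cite{f} and \cite{g} is spent.
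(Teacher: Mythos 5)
There are genuine gaps in both implications, and the central claim of your converse direction is contradicted by the paper's own example in Section \ref{Exaampkes}. For that direction, your dichotomy fails on two counts. First, a complete well-ordered set of key polynomials need not have strictly increasing degrees, and it can be infinite without ever calling for a limit key polynomial: for a pseudo-convergent sequence of transcendental type, Theorem \ref{compthemkppsc} produces the infinite complete set $\{x-a_\rho\}$ of degree-one key polynomials, no limit key polynomial occurs, and the resulting valuation is an immediate (hence valuation-algebraic) extension. So ``infinite $\Rightarrow$ a limit key polynomial is needed'' is false, and finiteness is not even the relevant condition --- what matters is whether $\textbf{Q}$ has a \emph{last} element. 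Second, and more seriously, the presence of a limit key polynomial does not force $\nu$ to be valuation-algebraic: in Section \ref{Exaampkes}, $\phi_\omega$ is a limit key polynomial for $\{\nu_n\}$, yet $\nu_{\omega+1}$ with finite $\gamma$ coincides by construction with its own truncation at $\phi_\omega$ and is valuation-transcendental (indeed the example then continues augmenting past it, and $\nu_{2\omega+1}$ has two limit key polynomials in its sequence). What actually has to be proved here is that if the complete sequence has no last element --- equivalently, $\nu\neq\nu_Q$ for every $Q$ --- then $\nu$ is valuation-algebraic; the route of \cite{g} is instead via Theorem \ref{main1}: valuation-transcendence forces the set $\{\mu(x-b)\mid b\in\overline K\}$ to attain a maximum (this is the Kuhlmann-type input the paper alludes to), one picks $a$ attaining it with $[K(a):K]$ minimal, and the minimal polynomial $q$ of $a$ satisfies $\nu=\nu_q$.

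For the direct implication, your induction breaks exactly when $q$ is a limit key polynomial. Conditions \textbf{(K1)}--\textbf{(K4)} of Theorem \ref{definofkeypol} then guarantee that for every candidate $q_0$ with $\deg(q_0)<\deg(q)$ there is some $Q'\in\Psi(Q_-)$, still of degree $<\deg(q)$, with $\nu_{q_0}(Q')<\nu(Q')$; hence $\nu$ is \emph{not} of the form $[\nu_{q_0};\nu(q)=\gamma]$ for any single $q_0$ of smaller degree, and the inductive step has nothing to recurse on. The argument has to be made directly rather than by descent through single augmentations: using the minimal-pair property of $(a,\delta(q))$ from Theorem \ref{main1}, one shows $\nu(f)=\mu(f(a))$ for all $f$ with $\deg(f)<\deg(q)$, so the values and residues of such $f$ stay in the torsion/algebraic closure of $(K,\nu_0)$ inside $(\overline K,\mu)$, and the single generator $q$ of the truncation then contributes either a non-torsion value or a transcendental residue. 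Your degree-one base case and the observation $\nu_q\leq\nu$ are correct, but the two structural claims above are where the proof actually lives, and as stated neither holds.
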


The theorem above can be seen as the version of Theorem 3.11 of \cite{Kuhbadplaces} for key polynomials and truncations. In Section \ref{spivkegpplo}, we describe a \textit{complete sequence of key polynomials for $\nu$}. If $\textbf{Q}$ is such sequence and $Q$ is a largest element for it, then $\nu=\nu_Q$. Hence, we conclude from Theorem \ref{Thmsobretruncvalutrans}, that if $\textbf{Q}$ has a last element, then $\nu$ is valuation-transcendental.

This paper is divided as follows. In Section \ref{secpolchave}, we describe the theory of MacLane-Vaqui\'e key polynomials. In Section \ref{spivkegpplo}, we describe some of the most important properties of Spivakovsky's key polynomials. Also in Section \ref{spivkegpplo}, we describe the relation of MacLane-Vaqui\'e and Spivakovsky's key polynomials. In Section \ref{pseudoconsequ}, we describe some of the main properties of pseudo-convergent sequences. Section \ref{compasraccai} is devoted to present the comparison between these three objects. In Section \ref{Exaampkes}, we present some examples that ilustrate the theory.

\section{Key polynomials}\label{secpolchave}

Take a commutative ring $R$ and an ordered abelian group $\Gamma$. Take $\infty$ to be an element not in $\Gamma$ and set $\Gamma_\infty$ to be $\Gamma\cup\{\infty\}$ with extensions of addition and order as usual.

\begin{Def}
A \textbf{valuation on $R$} is a map $\nu:R\lra \Gamma_\infty$ such that the following holds:
\begin{description}
\item[(V1)] $\nu(ab)=\nu(a)+\nu(b)$ for every $a,b\in R$,
\item[(V2)] $\nu(a+b)\geq \min\{\nu(a),\nu(b)\}$ for every $a,b\in R$,
\item[(V3)] $\nu(1)=0$ and $\nu(0)=\infty$.
\end{description}
\end{Def}

One can show that the \textbf{support of $\nu$}, defined by $\SU(\nu):=\{a\in R\mid \nu(a)=\infty\}$, is a prime ideal of $R$. Hence, if $R$ is a field, then \textbf{(V3)} is equivalent to
\[
\nu(x)=\infty\Llr x=0,
\]
which is the usual assumption for valuations defined on a field.

If $R=K[x]$, then valuations on $R$ describe all the valuations extending $\nu_0=\nu|_K$ to simple extensions $K(a)$ of $K$. Indeed, if $\SU(\nu)$ is the zero ideal, then $\nu$ extends in an obvious way to $K(x)$ where $x$ is a transcendental element.  If
\[
\SU(\nu)\neq (0),
\]
then there exists $p(x)\in K[x]$ monic and irreducible such that $\SU(\nu)= (p)$. Hence, $\nu$ defines a valuation on
\[
K[x]/(p)=K(a)
\]
for some element $a\in\overline K$ with minimal polynomial $p(x)$.

Let $\nu_0$ be a valuation of $K$ and $\nu$ a valuation of $K[x]$ extending $\nu_0$.
If $\gamma_1=\nu(x)$, we define
\[
\nu_1(a_0+a_1x+\ldots+a_rx^r)=\min\{\nu_0(a_i)+i\gamma_0\}.
\]
If $\nu=\nu_1$ we are done. If not, take a polynomial $\phi_1$ of smallest degree such that
\[
\gamma_1:=\nu(\phi_1)>\nu_1(\phi_1).
\]
For each $f\in K[x]$, write $f=f_0+f_1\phi_1+\ldots+f_r\phi_1^r$, with $\deg(f_i)<\deg(\phi_1)$ and define
\[
\nu_2(f)=\min\{\nu_1(f_i)+i\gamma_1\}.
\]
If $\nu=\nu_2$ we are done. Otherwise we continue the process.

\begin{Que}
Can we construct a ``sequence" of polynomials $\phi_i$ such that $\nu$ is the ``limit" of the maps $\nu_i$?
\end{Que}

Key polynomials were first introduced by MacLane in \cite{Mac}. In order to define Maclane key polynomials, we will need to define the graded algebra associated to a valuation. Let $R$ be a ring and $\nu$ a valuation on $R$. For every $\beta\in\nu R$, set
\[
P_\beta:=\{y\in R\mid\nu(y)\geq \beta\}\mbox{ and }P_\beta^+:=\{y\in R\mid\nu(y)> \beta\}.
\]
The graded algebra of $\nu$ is defined as
\[
\rm{gr}_\nu(R):=\bigoplus_{\beta\in\nu R}P_\beta/P_\beta^+.
\]
For an element $y\in R$ we denote by $\textrm{in}_\nu(y)$ the image of $y$ in $\rm{gr}_\nu(R)$, i.e.,
\[
\textrm{in}_\nu(y):=y+P_{\nu(y)}^+\in P_{\nu(y)}/P_{\nu(y)}^+\subset \textrm{gr}_\nu(R).
\]

Let $K$ be a field and let $\nu$ be valuation on $K[x]$, the polynomial ring in one variable over $K$. Given $f,g\in K[x]$, we say that \textbf{$f$ is $\nu$-equivalent to $g$} (and denote by $f\sim_\nu g$) if $\textrm{in}_\nu(f)=\textrm{in}_\nu(g)$. Moreover, we say that \textbf{$g$ $\nu$-divides $f$} (denote by $g\mid_\nu f$) if there exists $h\in K[x]$ such that $f\sim_\nu g\cdot h$.
\begin{Def}
A monic polynomial $\phi\in K[x]$ is a \textbf{Maclane-Vaqui\'e key polynomial for $\nu$} if it is $\nu$-irreducible (i.e., $\phi\mid_\nu f\cdot g \Lra \phi\mid_\nu f\mbox{ or }\phi\mid_\nu g$) and if for every $f\in K[x]$ we have
\[
\phi\mid_\nu f\Lra \deg(f)\geq \deg(\phi).
\]
\end{Def}
Let $\phi$ be a key polynomial for $\nu$, $\Gamma'$ be a group extension of $\nu(K[x])$ and $\gamma\in \Gamma'$ such that $\gamma>\nu(\phi)$. For every $f\in K[x]$, let
\[
f=f_0+f_1\phi+\ldots+f_n\phi^n
\]
be the $\phi$-expansion of $f$. Define the map
\[
\nu'(f):=\min_{0\leq i\leq n}\{\nu(f_i)+i\gamma\}.
\]
\begin{Teo}[Theorem 4.2 of \cite{Mac}]\label{Thmaugmentedval}
The map $\nu'$ is a valuation on $K[x]$.
\end{Teo}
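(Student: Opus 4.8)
The plan is to verify the valuation axioms \textbf{(V1)}--\textbf{(V3)} for $\nu'$ directly on $\phi$-expansions, in the spirit of MacLane's original argument. Axioms \textbf{(V3)} and \textbf{(V2)} are routine: $\nu'(1)=\nu(1)=0$, $\nu'(0)=\infty$, and since the $\phi$-expansion of $f+g$ is obtained by adding the coefficients of the $\phi$-expansions of $f$ and $g$ (the coefficients stay of degree $<\deg\phi$), \textbf{(V2)} for $\nu'$ reduces at once to \textbf{(V2)} for $\nu$. So the entire content is multiplicativity, \textbf{(V1)}: $\nu'(fg)=\nu'(f)+\nu'(g)$.

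Write $d:=\deg(\phi)$, $f=\sum_i f_i\phi^i$, $g=\sum_j g_j\phi^j$. Then $fg=\sum_k P_k\phi^k$ with $P_k=\sum_{i+j=k}f_ig_j$ and $\deg P_k<2d$, so dividing by $\phi$ gives $P_k=c_k+d_k\phi$ with $\deg c_k,\deg d_k<d$, and the genuine $\phi$-expansion of $fg$ is $fg=\sum_k h_k\phi^k$ with $h_k=c_k+d_{k-1}$ (set $d_{-1}:=0$). The one ingredient I would isolate is a \emph{no-cancellation lemma}: $\nu(c_k+d_k\phi)=\min\{\nu(c_k),\nu(d_k\phi)\}$ for every $k$. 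If this failed, we would have $\nu(c_k)=\nu(d_k\phi)$ finite with $\textrm{in}_\nu(c_k)=-\textrm{in}_\nu(d_k\phi)$, i.e.\ $c_k\sim_\nu\phi(-d_k)$, hence $\phi\mid_\nu c_k$ with $0\le\deg c_k<\deg\phi$ and $c_k\ne0$, contradicting that $\phi$ is a key polynomial.

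Granting the lemma, the inequality $\nu'(fg)\ge\nu'(f)+\nu'(g)=:V$ becomes pure bookkeeping: from $\nu(f_ig_j)+(i+j)\gamma\ge V$ we get $\nu(P_k)+k\gamma\ge V$, hence by the lemma $\nu(c_k)+k\gamma\ge V$ and $\nu(d_k\phi)+k\gamma\ge V$; since $\gamma>\nu(\phi)$ the latter yields $\nu(d_{k-1})+k\gamma>V$, so $\nu(h_k)+k\gamma\ge\min\{\nu(c_k)+k\gamma,\nu(d_{k-1})+k\gamma\}\ge V$. For the reverse inequality (the cases $f=0$ or $g=0$ being immediate) I would take $i_0$ and $j_0$ to be the \emph{largest} indices with $\nu(f_{i_0})+i_0\gamma=\nu'(f)$ and $\nu(g_{j_0})+j_0\gamma=\nu'(g)$, and set $k_0:=i_0+j_0$, $w:=V-k_0\gamma$. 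Among the summands of $P_{k_0}$ only the pair $(i_0,j_0)$ attains the value $w$ (all others give strictly more), so $\nu(P_{k_0})=w$ and $\textrm{in}_\nu(P_{k_0})=\textrm{in}_\nu(f_{i_0})\textrm{in}_\nu(g_{j_0})$. If $\nu(c_{k_0})>w$, the lemma forces $\textrm{in}_\nu(P_{k_0})=\textrm{in}_\nu(\phi d_{k_0})$, i.e.\ $\phi\mid_\nu f_{i_0}g_{j_0}$, and $\nu$-irreducibility then gives $\phi\mid_\nu f_{i_0}$ or $\phi\mid_\nu g_{j_0}$ — impossible, as both have degree $<d$. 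Hence $\nu(c_{k_0})=w$, while $\nu(d_{k_0-1})+k_0\gamma>V$ gives $\nu(d_{k_0-1})>w$, so $\nu(h_{k_0})=w$ and $\nu'(fg)\le\nu(h_{k_0})+k_0\gamma=V$.

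The step I expect to be the main obstacle is the passage from the ``naive'' coefficients $P_k$ to the true coefficients $h_k$ — controlling the carries $d_{k-1}$ — and ensuring that the product of the leading terms $f_{i_0}\phi^{i_0}$ and $g_{j_0}\phi^{j_0}$ survives in $h_{k_0}$ rather than being cancelled or pushed onto $\phi^{k_0+1}$. Both halves of the key-polynomial hypothesis enter precisely here: minimality of the degree yields the no-cancellation lemma, and $\nu$-irreducibility keeps the leading product outside the ideal generated by $\textrm{in}_\nu(\phi)$. Informally, the two conditions together say that $\textrm{in}_\nu(\phi)$ behaves like an indeterminate over the initial forms of polynomials of degree $<d$, which is exactly what makes $\nu'$ multiplicative once the powers $\phi^i$ are assigned the ``free'' values $i\gamma$ with $\gamma>\nu(\phi)$.
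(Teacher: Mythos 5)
Your argument is correct and complete: the no-cancellation lemma (from $\nu$-minimality of $\phi$) plus the use of $\nu$-irreducibility to keep $\mathrm{in}_\nu(f_{i_0}g_{j_0})$ out of the ideal generated by $\mathrm{in}_\nu(\phi)$ is exactly the content of MacLane's original proof of Theorem 4.2, which the paper cites without reproving. No gaps; the only cosmetic remark is that the ``reverse inequality'' case where $\nu'(f)$ or $\nu'(g)$ is infinite is trivial and could be mentioned alongside $f=0$ or $g=0$.
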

\begin{Def}
The map $\nu'$ is called an \textbf{augmented valuation} and denoted by
\[
\nu':=[\nu;\nu'(\phi)=\gamma].
\]
\end{Def}

Given a valuation $\nu$ on $K$, a group $\Gamma'$ containing $\nu K$ and $\gamma\in \Gamma'$ we define the map
\[
\nu_\gamma(a_0+a_1x+\ldots+a_nx^n):=\min_{0\leq i\leq n}\{\nu(a_i)+i\gamma\}
\]
\begin{Teo}[Theorem 4.1 of \cite{Mac}]\label{Thmmonval}
The map $\nu_\gamma$ is a valuation on $K[x]$.
\end{Teo}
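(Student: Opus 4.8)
The plan is to verify axioms \textbf{(V1)}, \textbf{(V2)}, \textbf{(V3)} directly from the formula $\nu_\gamma(a_0+a_1x+\ldots+a_nx^n)=\min_i\{\nu(a_i)+i\gamma\}$. Axiom \textbf{(V3)} is immediate: the constant polynomial $1$ has single coefficient $a_0=1$, so $\nu_\gamma(1)=\nu(1)+0=0$, and the zero polynomial yields an empty/zero set of coefficients, all $\infty$, so $\nu_\gamma(0)=\infty$. Axiom \textbf{(V2)} is also routine: if $f=\sum a_i x^i$ and $g=\sum b_i x^i$, then the coefficients of $f+g$ are $a_i+b_i$, and $\nu(a_i+b_i)+i\gamma\geq \min\{\nu(a_i),\nu(b_i)\}+i\gamma\geq \min\{\nu_\gamma(f),\nu_\gamma(g)\}$; taking the minimum over $i$ preserves the inequality.

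The substantive step is multiplicativity \textbf{(V1)}, $\nu_\gamma(fg)=\nu_\gamma(f)+\nu_\gamma(g)$. First I would establish the easy inequality $\nu_\gamma(fg)\geq \nu_\gamma(f)+\nu_\gamma(g)$: writing $fg=\sum_k c_k x^k$ with $c_k=\sum_{i+j=k}a_ib_j$, each term satisfies $\nu(a_ib_j)+k\gamma=(\nu(a_i)+i\gamma)+(\nu(b_j)+j\gamma)\geq \nu_\gamma(f)+\nu_\gamma(g)$, so $\nu(c_k)+k\gamma\geq \nu_\gamma(f)+\nu_\gamma(g)$ by \textbf{(V2)} applied in $K$, and then take the minimum over $k$. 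For the reverse inequality, the standard device is to pick the \emph{smallest} indices realizing the minima: let $r$ be minimal with $\nu(a_r)+r\gamma=\nu_\gamma(f)$ and $s$ minimal with $\nu(b_s)+s\gamma=\nu_\gamma(g)$. I would then examine the coefficient $c_{r+s}=\sum_{i+j=r+s}a_ib_j$ and argue that its "distinguished" term $a_rb_s$ strictly dominates all others: if $i+j=r+s$ with $i<r$, then $\nu(a_i)+i\gamma>\nu_\gamma(f)$ by minimality of $r$, hence $\nu(a_ib_j)+(r+s)\gamma>\nu_\gamma(f)+\nu_\gamma(g)$; symmetrically if $j<s$; and $i=r,j=s$ is the term in question, while $i>r$ forces $j<s$. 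Therefore $\nu(a_rb_s)+(r+s)\gamma=\nu_\gamma(f)+\nu_\gamma(g)$ is strictly smaller than the value of every other summand, so by the ultrametric equality case $\nu(c_{r+s})+(r+s)\gamma=\nu_\gamma(f)+\nu_\gamma(g)$, giving $\nu_\gamma(fg)\leq \nu_\gamma(f)+\nu_\gamma(g)$.

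The main obstacle — such as it is — is the reverse inequality in \textbf{(V1)}, and specifically the bookkeeping of choosing minimal indices $r,s$ and checking that no cancellation can occur in $c_{r+s}$; this is exactly where the hypothesis that $\gamma$ lies in an ordered group (so that strict inequalities in $\Gamma'$ behave well and the ultrametric "a strictly smallest term survives" principle applies in $K$) gets used. One should also note in passing that $\nu_\gamma$ takes values in the group generated by $\nu K$ and $\gamma$ inside $\Gamma'$, and that it restricts to $\nu$ on $K$, so it is genuinely an extension; these observations require no real argument beyond reading off the definition. (This is, of course, the special case $\phi=x$, $\nu$ replaced by $\nu_0$, of the augmented-valuation construction, but it is cleaner to prove it standalone rather than invoke Theorem \ref{Thmaugmentedval}.)
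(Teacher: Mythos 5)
Your proof is correct and complete in outline. The paper gives no proof of this statement at all --- it is quoted verbatim as Theorem 4.1 of MacLane's 1936 paper --- and your argument is precisely the classical one: \textbf{(V2)} and \textbf{(V3)} are immediate from the definition, the inequality $\nu_\gamma(fg)\geq\nu_\gamma(f)+\nu_\gamma(g)$ follows termwise, and the reverse inequality is obtained by taking the minimal indices $r,s$ realizing $\nu_\gamma(f)$ and $\nu_\gamma(g)$ and checking that $a_rb_s$ is the unique summand of $c_{r+s}$ of minimal value, so the ultrametric equality forces $\nu(c_{r+s})+(r+s)\gamma=\nu_\gamma(f)+\nu_\gamma(g)$. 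Your parenthetical remark is also apt: this is the $\phi=x$ case of the augmented-valuation construction (Theorem \ref{Thmaugmentedval}), and proving it standalone as you do is cleaner than deducing it from that more general statement.
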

This valuation is called a \textbf{monomial valuation} and denoted by
\[
\nu':=[\nu;\nu'(x)=\gamma].
\]

Consider now the set $\mathcal V$ of all valuations on $K[x]$ (extending a fixed valuation $\nu_0$ on $K$). The theorems above give us an algorithm to build valuations on $K[x]$. Namely, take a group $\Gamma_1$ containing $\nu(K)$ and $\gamma_1\in \Gamma_1$. Set
\[
\nu_1:=[\nu_0;\nu_1(x)=\gamma_1].
\]
Now, let $\phi_1$ be a key polynomial for $\nu_1$, $\Gamma_2$ an extension of $\Gamma_1$ and $\gamma_2\in \Gamma_2$ with $\gamma_2>\nu_1(\phi_1)$. Set
\[
\nu_2:=[\nu_1;\nu_2(\phi_1)=\gamma_2].
\]
Proceding interatively, we build groups
\[
\nu(K)\subseteq\Gamma_1\subseteq\Gamma_2\subseteq\ldots\subseteq\Gamma_n\subseteq\ldots,
\]
valuations
\[
\nu_1,\nu_2,\ldots,\nu_n,\ldots\in\mathcal V,
\]
polynomials
\[
\phi_1,\ldots,\phi_n,\ldots\in K[x]
\]
and $\gamma_i\in\Gamma_i$, $i\in\N$ such that $\phi_{i+1}$ is a key polynomial for $\nu_i$ and
\[
\nu_{i+1}=[\nu_i;\nu_{i+1}(\phi_{i})=\gamma_{i+1}].
\]

Assume that we have constructed an infinite sequence as above. Let $\Gamma_\infty$ be a group, such that $\cup \Gamma_i\subseteq \Gamma_\infty$ such that every non-empty subset of $\Gamma_\infty$ admits a supremum. For every $f\in K[x]$, we define
\[
\nu_\infty(f):=\sup\{\nu_i(f)\}.
\]
\begin{Teo}[Theorem 6.2 of \cite{Mac}]
The map $\nu_\infty$ is a valuation of $K[x]$.
\end{Teo}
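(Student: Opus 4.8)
The plan is to reduce the verification of the valuation axioms for $\nu_\infty$ to two elementary facts about suprema of monotone sequences, the crucial input being that for each fixed $f\in K[x]$ the sequence $\big(\nu_i(f)\big)_{i\in\N}$ is non-decreasing. Granting this, all the work takes place in $\Gamma_\infty$, whose completeness hypothesis guarantees that $\nu_\infty(f)=\sup_i\nu_i(f)$ is a well-defined element of $\Gamma_\infty$ (or equals $\infty$ when the sequence is unbounded).

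First I would establish monotonicity. By construction each $\nu_{i+1}$ is the augmented valuation $[\nu_i;\nu_{i+1}(\phi)=\gamma]$ for a key polynomial $\phi$ of $\nu_i$ and some $\gamma>\nu_i(\phi)$, and $\nu_{i+1}$ agrees with $\nu_i$ on every polynomial of degree $<\deg(\phi)$ (immediate from the $\phi$-expansion). Writing the $\phi$-expansion $f=f_0+\ldots+f_n\phi^n$ and using the standard property of key polynomials that $\nu_i$ coincides with its own $\phi$-truncation, namely $\nu_i(f)=\min_j\{\nu_i(f_j)+j\,\nu_i(\phi)\}$, we obtain
\[
\nu_i(f)=\min_{0\leq j\leq n}\{\nu_i(f_j)+j\,\nu_i(\phi)\}\leq\min_{0\leq j\leq n}\{\nu_i(f_j)+j\gamma\}=\min_{0\leq j\leq n}\{\nu_{i+1}(f_j)+j\gamma\}=\nu_{i+1}(f),
\]
since $\gamma>\nu_i(\phi)$ and $j\geq 0$; equivalently one may simply quote MacLane's comparison $\nu_i\leq\nu_{i+1}$ for augmented valuations. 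Hence $\big(\nu_i(f)\big)_i$ is non-decreasing, so $\nu_\infty(f)$ is in fact the limit of this sequence.

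Next I would verify \textbf{(V1)}--\textbf{(V3)}. Axiom \textbf{(V3)} is immediate: $\nu_\infty(1)=\sup_i 0=0$ and $\nu_\infty(0)=\sup_i\infty=\infty$; likewise $\nu_\infty$ restricts to $\nu_0$ on $K$ since every $\nu_i$ does. For \textbf{(V2)} I would use that for non-decreasing $\N$-indexed sequences $(a_i),(b_i)$ one has $\sup_i\min\{a_i,b_i\}=\min\{\sup_i a_i,\sup_i b_i\}$; applying this with $a_i=\nu_i(f)$, $b_i=\nu_i(g)$ and combining with $\nu_i(f+g)\geq\min\{\nu_i(f),\nu_i(g)\}$ yields $\nu_\infty(f+g)\geq\min\{\nu_\infty(f),\nu_\infty(g)\}$. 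For \textbf{(V1)} I would use the analogous identity $\sup_i(a_i+b_i)=\sup_i a_i+\sup_i b_i$ for non-decreasing $\N$-indexed sequences, whence $\nu_\infty(fg)=\sup_i\nu_i(fg)=\sup_i\big(\nu_i(f)+\nu_i(g)\big)=\sup_i\nu_i(f)+\sup_i\nu_i(g)=\nu_\infty(f)+\nu_\infty(g)$. The identity $\sup_i(a_i+b_i)=\sup_i a_i+\sup_i b_i$ I would prove by contradiction: if the left side equalled some $u$ strictly below $s+t$ with $s=\sup_i a_i$ and $t=\sup_i b_i$, then from $a_i+b_i\leq u$ one deduces (splitting into $i\geq j$ and $i<j$ and using monotonicity) that $a_i\leq u-b_j$ for every $i$ and every $j$, hence $s\leq u-b_j$ for all $j$, hence $t\leq u-s$, i.e.\ $s+t\leq u$, a contradiction; the inequality $\sup_i(a_i+b_i)\leq s+t$ is trivial. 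The $\sup$/$\min$ identity is proved in the same spirit, again using that any two indices admit a common upper bound in $\N$.

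The only genuinely non-formal point — and the one I would flag as the heart of the matter — is the monotonicity of $\big(\nu_i(f)\big)_i$: it rests on the fact that a key polynomial $\phi$ for $\nu_i$ makes $\nu_i$ equal to its $\phi$-truncation, so that passing to $[\nu_i;\nu_{i+1}(\phi)=\gamma]$ with $\gamma>\nu_i(\phi)$ can only increase values. Everything else is order-theoretic bookkeeping, the one standing assumption being that $\Gamma_\infty$ is rich enough for all the suprema in sight to exist, which is exactly the hypothesis placed on $\Gamma_\infty$.
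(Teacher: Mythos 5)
Your proof is correct. The paper itself gives no proof of this statement — it is quoted verbatim from MacLane (Theorem 6.2 of \cite{Mac}) — but your argument is essentially the one in that source: you correctly isolate the only substantive point, namely the monotonicity $\nu_i(f)\leq\nu_{i+1}(f)$, and reduce it to the fact that a valuation agrees with its own truncation at one of its key polynomials (which indeed follows from the minimality property of key polynomials via the integrality of the graded ring), after which the axioms \textbf{(V1)}--\textbf{(V3)} pass to the supremum by the directed-set bookkeeping you describe.
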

The valuation constructed in the theorem above is called a \textbf{limit valuation} (and we denote $\nu_\infty=\lim\nu_i$.

Consider now the subset $\mathcal V^c$ of $\mathcal V$ consisting of monomial, augmented and limit valuations (extending $\nu_0$).
\begin{Que}\label{QuestMaclane}
Is it true that $\mathcal V^c=\mathcal V$? In other words, given any valuation $\nu\in\mathcal V$, does there exist a sequence of valuation $\nu_1,\nu_2,\ldots, \nu_n,\ldots$ such that $\nu=\nu_i$ for some $i$ or $\nu=\lim\nu_i$?
\end{Que}
Let $\nu$ be any valuation on $K[x]$. Set
\[
\nu_0=[\nu;\nu_0(x)=\nu(x)].
\]
If $\nu=\nu_0$, then $\nu\in \mathcal V^c$. If not, then take $\phi_1\in K[x]$, monic and of smallest degree among polynomials $f$ satisfying $\nu_1(f)<\nu(f)$. One can prove that $\phi_1$ is a key polynomial for $\nu_1$. Consider then the valuation
\[
\nu_2=[\nu_1;\nu_2(\phi_1)=\nu(\phi_1)].
\]
If $\nu_2=\nu$, then $\nu\in \mathcal V^c$. If not, we choose $\phi_2\in K[x]$ monic and of smallest degree among polynomials satisfying $\nu_2(f)<\nu(f)$. Again, one can prove that $\phi_2$ is a key polynomial for $\nu_2$ and consider
\[
\nu_2=[\nu_1;\nu_2(\phi_2)=\nu(\phi_2)].
\]
We proceed iteratively until we find a valuation $\nu_n$ with $\nu_n=\nu$, or constructing an infinite sequence $\{\nu_i\}_{i\in\N}$ such that $\nu_i\neq\nu$ and $\nu_{i+1}$ is an augmented valuation of $\nu_i$. We have the following:
\begin{Teo}[Theorem 8.1 of \cite{Mac}]\label{Maclkpthem}
If $\nu_0$ is a discrete valuation of $K$, and the infinite sequence above has been constructed, then $\nu=\lim\nu_i$. In particular, if $\nu_0$ is a discrete valuation, then $\mathcal V^c=\mathcal V$.
\end{Teo}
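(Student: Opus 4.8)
The plan is to show that the valuations $\nu_i$ approximate $\nu$ monotonically from below, that under the discreteness hypothesis this approximation eventually computes $\nu(f)$ for every $f$, and that this is exactly what the statement $\nu=\lim\nu_i$ asserts once the sequence is infinite.

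The first step is to record the monotonicity. By induction on $i$, using the definition of the augmented valuation $\nu_{i+1}=[\nu_i;\nu_{i+1}(\phi_i)=\nu(\phi_i)]$ together with $\nu(\phi_i)>\nu_i(\phi_i)$, one checks that $\nu_i(f)\leq\nu_{i+1}(f)\leq\nu(f)$ for every $f\in K[x]$. For the upper bound the point is that, by the minimality in the choice of $\phi_i$, every $g$ with $\deg g<\deg\phi_i$ already satisfies $\nu_i(g)=\nu(g)$; applying this to the coefficients of the $\phi_i$-expansion $f=f_0+f_1\phi_i+\cdots+f_n\phi_i^n$ gives $\nu(f)\geq\min_j\{\nu(f_j)+j\nu(\phi_i)\}=\nu_{i+1}(f)$. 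An immediate consequence is that the degrees $d_i:=\deg\phi_i$ are non-decreasing: if $\nu_{i+1}(f)<\nu(f)$ then also $\nu_i(f)\leq\nu_{i+1}(f)<\nu(f)$, so $f$ is a defect polynomial for $\nu_i$ and hence $\deg f\geq d_i$; taking $f=\phi_{i+1}$ yields $d_{i+1}\geq d_i$.

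The second step is a dichotomy on whether the $d_i$ are bounded. If they are not, then $d_i\to\infty$, and for each $f$ one may choose $i$ with $d_i>\deg f$; then $f$ is not a defect for $\nu_i$, so $\nu_i(f)=\nu(f)$, and therefore $\nu_j(f)=\nu(f)$ for all $j\geq i$. Thus $\sup_j\nu_j(f)=\nu(f)$ for every $f$, the limit valuation $\lim\nu_i$ is defined, and it equals $\nu$; note that no hypothesis on $\nu_0$ was used here. Consequently the entire weight of the theorem rests on the remaining case, and the main obstacle is the claim: \emph{if $\nu_0$ is discrete and the $d_i$ stay bounded, then the sequence terminates with some $\nu_n=\nu$}. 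Granting this, an infinite sequence must have unbounded degrees, and we fall back into the case already treated, so $\nu=\lim\nu_i$.

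To prove this remaining claim one would work beyond the index $N$ where $d_i$ is constant, say equal to $d$; there all the $\phi_i$ are monic of degree $d$, so $\phi_{i+1}-\phi_i$ has degree $<d$, and analysing the $\phi_i$-expansion of $\phi_{i+1}$ (using that $\nu_i$ and $\nu$ agree on polynomials of degree $<d$) shows that each passage from $\nu_i$ to $\nu_{i+1}$ either strictly enlarges a numerical datum attached to the construction — the value group and residue field of $\nu_i$ relative to $\nu_0$ — or else is an ``inessential'' step that, in this optimal sequence, can occur only finitely often. The discreteness of $\nu_0$ is precisely what forces these data to be finite and, since only degree-$d$ polynomials intervene, bounded in terms of $d$ (the product of the ramification indices and residue degrees introduced is at most $d$); hence only finitely many genuine augmentations are possible and the process must stop. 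I expect the careful bookkeeping of these invariants, and the verification that discreteness really caps them, to be the delicate part of the argument, and it is the only place where the hypothesis on $\nu_0$ is needed. Finally, for the ``in particular'': applied to an arbitrary $\nu\in\mathcal V$, the construction either terminates, exhibiting $\nu$ as a monomial or an augmented valuation, or it does not, exhibiting $\nu=\lim\nu_i$ as a limit valuation; in every case $\nu\in\mathcal V^c$, whence $\mathcal V^c=\mathcal V$.
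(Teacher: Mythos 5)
The paper itself does not prove this statement --- it is quoted directly as Theorem 8.1 of MacLane's paper --- so there is no internal proof to compare against; I am assessing your argument on its own terms. Your first two steps (the monotonicity $\nu_i(f)\leq\nu_{i+1}(f)\leq\nu(f)$, the non-decreasing degrees $d_i$, and the observation that unbounded $d_i$ forces $\nu=\lim\nu_i$ with no hypothesis on $\nu_0$) are correct and are indeed the standard opening moves. The problem is the claim on which you hang everything else: \emph{if $\nu_0$ is discrete and the $d_i$ stay bounded, then the sequence terminates}. This is false, so the reduction ``infinite sequence $\Rightarrow$ unbounded degrees'' collapses. A counterexample: let $K=\Q$ with the $p$-adic valuation $\nu_0=v_p$, pick $z\in\Z_p$ transcendental over $\Q$, and set $\nu(f):=v_p(f(z))$. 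The optimal MacLane sequence for this $\nu$ consists entirely of the degree-one polynomials $\phi_i=x-z_i$, where $z_i$ runs over partial sums of the $p$-adic expansion of $z$; the sequence is infinite, $d_i=1$ for all $i$, no $\nu_n$ equals $\nu$, and yet $\nu=\lim\nu_i$. So the bounded-degree infinite case is not vacuous --- it is the generic case, and it is exactly where the theorem has content.

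What the argument must actually do in the bounded-degree case is show directly that $\sup_i\nu_i(f)=\nu(f)$ for every $f$. Discreteness enters as follows: once $d_i$ is constant equal to $d$, the value group of each $\nu_i$ is $\tfrac{1}{e_i}\Z$ with $e_i$ (times the residue degree) bounded by $d$, so all the values $\nu_i(f)$ lie in a single discrete group $\tfrac{1}{e}\Z$. The non-decreasing sequence $\nu_i(f)$ is bounded above by $\nu(f)$, hence stabilizes; the remaining (and genuinely delicate) step is to rule out stabilization at a value strictly below $\nu(f)$, which MacLane does by showing that a polynomial $f$ with $\nu_i(f)$ eventually constant and $<\nu(f)$ would force the strictly increasing values $\nu(\phi_i)\in\tfrac1e\Z$ to be bounded above --- impossible in a discrete group. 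Your invariant-counting heuristic (ramification and residue degrees capped by $d$) is relevant to bounding $e$, but it does not prove termination, and termination is not what happens. As written, the proposal proves the theorem only in the unbounded-degree case, where the discreteness hypothesis is not even needed.
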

If $\nu_0$ is not discrete, then $\mathcal V^c$ does not have to be equal to $\mathcal V$ (as it will be shown in Section \ref{Exaampkes}). This happens because we might need a sequence of key polynomials of order type greater than $\omega$. In order to find a sequence of ``augumented" valuations for a given valuation, Vaqui\'e introduced ``limit key valuations" (associated to a \textit{limit key polynomial}).

A family $\{\nu_\alpha\}_{\alpha\in A}$ of valuations of $K[x]$, indexed by a totally ordered set $A$, is called a \textbf{family of augmented iterated valuations} if for all $\alpha$ in $A$, except $\alpha$
the smallest element of $A$, there exists $\theta$ in $A$, $\theta<\alpha$, such that the valuation $\nu_\alpha$ is an augmented valuation of the form $\nu_\alpha=[\nu_\theta;\nu_\alpha(\phi_\alpha)=\gamma_\alpha]$, and if we have the following properties:
\begin{itemize}
\item If $\alpha$ admits an immediate predecessor in $A$, $\theta$ is that predecessor, and in the case when $\theta$ is not the smallest element of $A$, the polynomials $\phi_\alpha$ and $\phi_\theta$ are not $\nu_\theta$-equivalent and satisfy $\deg(\phi_\theta)\leq \deg(\phi_\alpha)$;
\item if $\alpha$ does not have an immediate predecessor in $A$, for all $\beta$ in $A$ such that $\theta<\beta<\alpha$, the valuations $\nu_\beta$ and $\nu_\alpha$ are equal to the augmented valuations
\[
\nu_\beta=[\nu_\theta;\nu_\beta(\phi_\beta)=\gamma_\beta]\mbox{ and }\nu_\alpha=[\nu_\beta;\nu_\alpha(\phi_\alpha)=\gamma_\alpha],
\]
respectively, and the polynomials $\phi_\alpha$ and $\phi_\beta$ have the same degree.
\end{itemize}

For $f,g\in K[x]$, we say that $f$ \textbf{$A$-divides $g$} ($f\mid_A g$) if there exists $\alpha_0\in A$ such that  $f\mid_{\nu_\alpha}g$ for every $\alpha\in A$ with $\alpha>\alpha_0$. A polynomial $\phi$ is said to be \textbf{$A$-minimal} if for any polynomial $f\in K[x]$ if $\phi\mid_A f$, then $\deg(\phi)\leq \deg(f)$. Also, we say that $\phi$ is \textbf{$A$-irreducible} if for every $f,g \in K[x]$, if $\phi\mid_A f\cdot g$, then $\phi\mid_A f$ or $\phi\mid_A g$.
\begin{Def}
A monic polynomial $\phi$ of $K[x]$ is said to be a \textbf{Maclane-Vaqui\'e limit key polynomial} for the family $\{\nu_\alpha\}_{\alpha\in A}$ if it is $A$-minimal and $A$-irreducible.
\end{Def}

Let $\{\nu_\alpha\}_{\alpha\in A}$ be a family of iterated valuations of $K[x]$ and, for each $\alpha\in A$, denote the value group of $\nu_\alpha$ by $\Gamma_{\nu_\alpha}$. Then
\[
\Gamma_A:=\bigcup_{\alpha\in A} \Gamma_{\nu_\alpha}
\]
is a totally ordered abelian group. For a polynomial $f\in K[x]$, the family $\{\nu_\alpha\}_{\alpha\in A}$ is said to be convergent for $f$ if $\{\nu_\alpha(f)\}_{\alpha\in A}$ admits a majorant in $\Gamma_A$.

\begin{Teo}[Th\'eor\`eme 2.4 of \cite{Vaq}]\label{vquithrm}
Let $\nu$ be a valuation of $K[x]$ extending a valuation $\nu_0$ of $K$. Then, there exists a family of iterated valuations $\{\nu_\alpha\}_{\alpha\in A}$ of $K[x]$, convergent for every $f\in K[x]$, such that
\[
\nu(f)=\lim_{\alpha\in A}\{\nu_\alpha(f)\}:=\sup_{\alpha\in A}\{\nu_\alpha(f)\}.
\]
\end{Teo}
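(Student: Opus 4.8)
The plan is to realize $\nu$ as the pointwise supremum of a transfinite chain of augmented iterated valuations, by pushing the iterative procedure indicated after Question~\ref{QuestMaclane} through limit ordinals with the help of Maclane-Vaqui\'e limit key polynomials, and then to check that the chain is indexed by a set and is convergent for every $f$.

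I would build the chain $\{\nu_\alpha\}_\alpha$ by transfinite recursion, maintaining $\nu_\alpha\leq\nu$ throughout. Start from the monomial valuation $\nu_0:=[\nu|_K;\nu_0(x)=\nu(x)]$ (Theorem~\ref{Thmmonval}); note $\nu_0\leq\nu$ by the valuation axioms. At a successor step, given $\nu_\alpha\leq\nu$ with $\nu_\alpha\neq\nu$, pick a monic $\phi_\alpha\in K[x]$ of smallest degree with $\nu_\alpha(\phi_\alpha)<\nu(\phi_\alpha)$ and set $\nu_{\alpha+1}:=[\nu_\alpha;\nu_{\alpha+1}(\phi_\alpha)=\nu(\phi_\alpha)]$. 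Two points must be verified here. First, $\phi_\alpha$ is a key polynomial for $\nu_\alpha$: it is $\nu_\alpha$-minimal by minimality of its degree, and $\nu_\alpha$-irreducible because a nontrivial factorization of $\mathrm{in}_{\nu_\alpha}(\phi_\alpha)$ in $\mathrm{gr}_{\nu_\alpha}(K[x])$ would produce a monic polynomial of degree $<\deg\phi_\alpha$ on which $\nu_\alpha$ and $\nu$ still disagree; by Theorem~\ref{Thmaugmentedval}, $\nu_{\alpha+1}$ is then a valuation. Second, $\nu_\alpha<\nu_{\alpha+1}\leq\nu$: the first inequality is intrinsic to the augmentation and is strict since $\nu_{\alpha+1}(\phi_\alpha)=\nu(\phi_\alpha)>\nu_\alpha(\phi_\alpha)$, while $\nu_{\alpha+1}\leq\nu$ holds because each coefficient $f_i$ of the $\phi_\alpha$-expansion of a given $f$ has degree $<\deg\phi_\alpha$, so $\nu(f_i)=\nu_\alpha(f_i)$ and hence $\nu(f)\geq\min_i\{\nu(f_i)+i\,\nu(\phi_\alpha)\}=\nu_{\alpha+1}(f)$.

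At a limit ordinal $\alpha$, by Theorem~6.2 of~\cite{Mac} the map $\widetilde\nu_\alpha:=\sup_{\beta<\alpha}\nu_\beta$ is again a valuation on $K[x]$, and $\widetilde\nu_\alpha\leq\nu$. If the degrees $\deg\phi_\beta$, $\beta<\alpha$, are unbounded, then for every $f$ there is $\beta<\alpha$ with $\deg f<\deg\phi_\beta$, whence $\nu_\beta(f)=\nu(f)$ and therefore $\widetilde\nu_\alpha(f)=\nu(f)$; thus $\widetilde\nu_\alpha=\nu$ and the recursion stops. If instead those degrees are bounded, hence eventually equal to some $d$, the corresponding tail of $\{\nu_\beta\}_{\beta<\alpha}$ is a continuous family of augmented iterated valuations with key polynomials all of degree $d$; if $\widetilde\nu_\alpha\neq\nu$, I would choose a monic $\phi_\alpha$ of smallest degree with $\widetilde\nu_\alpha(\phi_\alpha)<\nu(\phi_\alpha)$, use the structure of continuous families (the machinery of~\cite{Vaq}) to see that $\phi_\alpha$ is $A$-minimal and $A$-irreducible for that family — so it is a Maclane-Vaqui\'e limit key polynomial, necessarily of degree $>d$ — and set $\nu_\alpha:=[\widetilde\nu_\alpha;\nu_\alpha(\phi_\alpha)=\nu(\phi_\alpha)]$, checking $\nu_\alpha\leq\nu$ exactly as in the successor case.

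Finally I would assemble everything. If the recursion never reached $\nu$, it would define $\nu_\alpha$ for every ordinal $\alpha$, and then the non-decreasing function $\alpha\mapsto\deg\phi_\alpha$ into the positive integers would be eventually constant, say equal to $d$ for $\alpha\geq\alpha_0$; but at the limit ordinal $\alpha_0+\omega$ we would have $\widetilde\nu_{\alpha_0+\omega}\neq\nu$ and yet $\deg\phi_{\alpha_0+\omega}>d$ by the continuous-family analysis above, contradicting $\deg\phi_{\alpha_0+\omega}=d$. Hence the recursion stops, producing a set $A$ and a family $\{\nu_\alpha\}_{\alpha\in A}$ of augmented iterated valuations with $\nu_\alpha\leq\nu$ for all $\alpha$, where either the last $\nu_\alpha$ equals $\nu$ or the recursion halted at a limit ordinal $\alpha$ with $\widetilde\nu_\alpha=\nu$ (unbounded degrees). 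In either case, for each $f$ one has $\nu_\alpha(f)=\nu(f)$ for all sufficiently large $\alpha\in A$, so $\nu(f)\in\Gamma_A=\bigcup_\alpha\Gamma_{\nu_\alpha}$ is a majorant of $\{\nu_\alpha(f)\}_{\alpha\in A}$, and $\nu(f)=\lim_{\alpha\in A}\nu_\alpha(f)=\sup_{\alpha\in A}\nu_\alpha(f)$, as required. The main obstacle is precisely the limit-stage analysis invoked twice above: verifying that the preceding augmentations form a genuine continuous family, that the minimal-degree polynomial separating $\widetilde\nu_\alpha$ from $\nu$ is a limit key polynomial for it, and — crucially — that its degree strictly exceeds the common degree $d$ of the family, which is what keeps the whole construction inside a set. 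This is the technical heart of~\cite{Vaq}.
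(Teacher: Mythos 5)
The paper does not actually prove this statement---it is quoted as Th\'eor\`eme 2.4 of \cite{Vaq}---so there is no internal proof to compare against; your outline does follow the strategy of that reference (transfinite augmentation at successor steps, limit key polynomials at limit steps, a degree-based termination argument). But as written it has a genuine gap exactly at the point you defer to ``the machinery of \cite{Vaq}''. The claim that at a limit ordinal $\alpha$ the minimal-degree monic polynomial on which $\widetilde\nu_\alpha=\sup_{\beta<\alpha}\nu_\beta$ and $\nu$ disagree must have degree strictly greater than the eventual common degree $d$ of the $\phi_\beta$ is false in general: it holds only when the values $\gamma_\beta=\nu(\phi_\beta)$ are cofinal in the relevant set of values of $\nu$ on monic polynomials of degree $d$. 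If $\nu_0$ is not discrete this can fail after $\omega$ steps, and one is forced to keep augmenting in degree $d$ through order types larger than $\omega$---this is precisely the phenomenon the paper flags after Theorem \ref{Maclkpthem} as the reason Vaqui\'e's families need order type exceeding $\omega$. Consequently your termination argument (eventual constancy of $\alpha\mapsto\deg\phi_\alpha$ contradicted by a forced degree jump at $\alpha_0+\omega$) does not go through. Termination must come from elsewhere, e.g.\ from the observation that the $\nu_\alpha$ form a strictly increasing chain of functions on $K[x]$ bounded above by $\nu$, hence cannot be indexed by arbitrarily large ordinals, or from Vaqui\'e's finer organization into admissible and continuous families.

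A second, smaller issue: the family you produce does not obviously satisfy the definition of a family of augmented iterated valuations recalled in Section \ref{secpolchave}. At an index $\alpha$ without immediate predecessor, that definition requires $\nu_\alpha$ to be a simple augmentation $[\nu_\beta;\nu_\alpha(\phi_\alpha)=\gamma_\alpha]$ of every sufficiently large predecessor $\nu_\beta$, with $\deg\phi_\alpha=\deg\phi_\beta$; your construction instead augments the supremum $\widetilde\nu_\alpha$ by a polynomial of (allegedly) larger degree, which is the limit augmentation passing to the \emph{next} family rather than a member of the current one. The structure results you invoke---that once a continuous family has been made cofinal a limit key polynomial of strictly larger degree exists and realizes the minimal degree of disagreement with $\nu$---are the technical heart of the theorem and cannot be treated as a black box in a proof of it.
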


\begin{Obs}
Theorem \ref{vquithrm} is a generalization of Theorem \ref{Maclkpthem}. The difference is that, if $\nu$ is not discrete, we might need a sequence of key polynomials with order type greater than $\omega$.
\end{Obs}
\section{Spivakovsky's key polynomials}\label{spivkegpplo}
We start this section by presenting a characterization of $\epsilon(f)$ is terms of the fixed extension $\mu$ of $\nu$ to $\overline K[x]$. For a monic polynomial $f\in K[x]$, we define
\[
\delta(f):=\max\{\nu(x-a)\mid a\mbox{ is a root of }f\}.
\]
\begin{Exa}
Let $f(x)=(x-a_1)(x-a_2)(x-a_3)$. Then
\begin{displaymath}
\begin{array}{rcl}
\partial_1(f)&=&(x-a_1)(x-a_2)+(x-a_1)(x-a_3)+(x-a_2)(x-a_3)\\
\partial_2(f)&=&(x-a_1)+(x-a_2)+(x-a_3)\\
\partial_3(f)&=&1.
\end{array}
\end{displaymath}
\textbf{(i)} Assume that $\mu(x-a_i)=i$, for $i=1,2,3$, then
\[
\nu(f)=6,\ \nu(\partial_1f)=3,\ \nu(\partial_2f)=1\mbox{ and }\nu(\partial_3f)=0,
\]
and hence
\begin{displaymath}
\begin{array}{rcl}
\epsilon(f)&=&\max\left\{\frac{\nu(f)-\nu(\partial_1f)}{1},\frac{\nu(f)-\nu(\partial_2f)}{2},\frac{\nu(f)-\nu(\partial_3f)}{3}\right\}\\ \\
&=&\max\left\{3,\frac{5}{2},2\right\}=3=\delta(f).
\end{array}
\end{displaymath}
\textbf{(ii)} Assume that $\mu(x-a_1)=1$ and $\mu(x-a_2)=\mu(x-a_3)=2$, then
\[
\nu(f)=5,\ \nu(\partial_1f)\geq 3,\ \nu(\partial_2f)=1\mbox{ and }\nu(\partial_3f)=0,
\]
and hence
\begin{displaymath}
\begin{array}{rcl}
\epsilon(f)&=&\max\left\{\frac{\nu(f)-\nu(\partial_1f)}{1},\frac{\nu(f)-\nu(\partial_2f)}{2},\frac{\nu(f)-\nu(\partial_3f)}{3}\right\}\\ \\
&=&\max\left\{2,2,\frac{5}{3}\right\}=2=\delta(f).
\end{array}
\end{displaymath}
\end{Exa}

The examples above can be generalized to prove the following.
\begin{Prop}[Proposition 3.1 of \cite{g}]\label{Profmagica}
Let $f\in K[x]$ be a monic polynomial. Then $\delta(f)=\epsilon(f)$.
\end{Prop}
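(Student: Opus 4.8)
\emph{Proof proposal.} The plan is to pass to the algebraic closure and compute both quantities directly from the roots of $f$. Write $f=\prod_{i=1}^{n}(x-a_i)$ with $a_i\in\overline K$, ordering the roots so that $\mu(x-a_1)\geq\mu(x-a_2)\geq\cdots\geq\mu(x-a_n)$; then by definition $\delta(f)=\mu(x-a_1)$. Since $\mu$ extends $\nu$ and $f,\partial_b f\in K[x]$, we have $\nu(f)=\mu(f)=\sum_{i=1}^{n}\mu(x-a_i)$ and $\nu(\partial_b f)=\mu(\partial_b f)$, so it suffices to argue with $\mu$. The one computational input is the identity
\[
\partial_b f=\sum_{\substack{S\subseteq\{1,\dots,n\}\\ |S|=n-b}}\ \prod_{i\in S}(x-a_i),
\]
i.e.\ $\partial_b f$ is the elementary symmetric polynomial of degree $n-b$ in the $x-a_i$; this follows by comparing the coefficient of $T^b$ on the two sides of $f(x+T)=\prod_{i=1}^{n}\bigl((x-a_i)+T\bigr)$.

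For $\epsilon(f)\leq\delta(f)$, apply \textbf{(V2)} to the displayed expansion of $\partial_b f$: the term indexed by $S$ has value $\sum_{i\in S}\mu(x-a_i)$, and the minimum of this over all $S$ with $|S|=n-b$ is attained at the $n-b$ roots of smallest value, giving $\sum_{i=b+1}^{n}\mu(x-a_i)$. Hence $\nu(\partial_b f)\geq\sum_{i=b+1}^{n}\mu(x-a_i)$, so
\[
\nu(f)-\nu(\partial_b f)\ \leq\ \sum_{i=1}^{b}\mu(x-a_i)\ \leq\ b\,\mu(x-a_1)\ =\ b\,\delta(f)
\]
for every $b\geq 1$ (the bound is vacuous for $b>n$, where $\partial_b f=0$). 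Dividing by $b$ and taking the maximum gives $\epsilon(f)\leq\delta(f)$.

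For the reverse inequality we must exhibit a single $b$ realizing $\delta(f)$. Let $m=\#\{i:\mu(x-a_i)=\delta(f)\}\geq 1$, so $\mu(x-a_1)=\cdots=\mu(x-a_m)=\delta(f)>\mu(x-a_{m+1})\geq\cdots\geq\mu(x-a_n)$. In the expansion of $\partial_m f$ as a sum over subsets $S$ of size $n-m$, the term indexed by $S_0=\{m+1,\dots,n\}$ has value $\sum_{i=m+1}^{n}\mu(x-a_i)$, and it is \emph{strictly} smaller than every other term: for $S\neq S_0$ with $|S|=n-m$, the set $S\setminus S_0$ is a nonempty subset of $\{1,\dots,m\}$, so $\sum_{i\in S\setminus S_0}\mu(x-a_i)=|S\setminus S_0|\,\delta(f)$, whereas $\sum_{i\in S_0\setminus S}\mu(x-a_i)<|S_0\setminus S|\,\delta(f)=|S\setminus S_0|\,\delta(f)$ because every index in $S_0\setminus S$ lies in $\{m+1,\dots,n\}$ and hence has value $<\delta(f)$; adding $\sum_{i\in S\cap S_0}\mu(x-a_i)$ to both sides shows $\mu\bigl(\prod_{i\in S}(x-a_i)\bigr)>\mu\bigl(\prod_{i\in S_0}(x-a_i)\bigr)$. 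Since the minimal value among the terms is attained exactly once, \textbf{(V2)} becomes an equality, $\nu(\partial_m f)=\sum_{i=m+1}^{n}\mu(x-a_i)$, so $\nu(f)-\nu(\partial_m f)=\sum_{i=1}^{m}\mu(x-a_i)=m\,\delta(f)$ and thus $\frac{\nu(f)-\nu(\partial_m f)}{m}=\delta(f)$, giving $\epsilon(f)\geq\delta(f)$. Together with the previous paragraph this yields $\epsilon(f)=\delta(f)$.

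The only delicate point is the no-cancellation argument in the last paragraph: one genuinely needs the minimal-value term of $\partial_m f$ to be \emph{unique}, and this relies precisely on the strict drop $\mu(x-a_{m+1})<\delta(f)$ built into the choice of $m$. Everything else is a routine application of the ultrametric inequality.
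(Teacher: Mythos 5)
Your proof is correct and follows essentially the same route the paper indicates: it is precisely the generalization of the paper's two cubic examples, factoring $f$ over $\overline K$, identifying $\partial_b f$ with the elementary symmetric function of degree $n-b$ in the factors $x-a_i$, and then using the ultrametric inequality together with the uniqueness of the minimal-value term at $b=m$ to get equality. The attention to the strict-drop/no-cancellation point, and the derivation of the symmetric-function identity from $f(x+T)=\prod\bigl((x-a_i)+T\bigr)$ (which keeps the argument valid in positive characteristic, where $\partial_b$ must be read as the Hasse derivative), are exactly the details the paper leaves to the reader.
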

In particular, $\delta(f)$ does not depend on the choice of the extension $\mu$ of $\nu$ to $\overline K[x]$.

Let $q\in K[x]$ be any polynomial. Then $\nu_q$ does not need to be a valuation (Example 2.5 of \cite{f}). The first important property of key polynomials is the following.

\begin{Prop}[Proposition 2.6 of \cite{f}]
If $Q$ is a key polynomial, then $\nu_Q$ is a valuation.
\end{Prop}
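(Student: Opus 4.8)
The plan is to verify the two nontrivial valuation axioms (V1) and (V2) for $\nu_Q$, since (V3) is immediate from the $Q$-expansion ($1 = 1$, $0 = 0$). The inequality $\nu_Q(f+g) \geq \min\{\nu_Q(f), \nu_Q(g)\}$ (V2) is the easy half: if $f = \sum f_i Q^i$ and $g = \sum g_i Q^i$ are the $Q$-expansions, then $f+g = \sum (f_i+g_i) Q^i$ is the $Q$-expansion of $f+g$ (each $f_i + g_i$ is either zero or of degree $< \deg Q$), so $\nu_Q(f+g) = \min_i \nu((f_i+g_i)Q^i) \geq \min_i \min\{\nu(f_i Q^i), \nu(g_i Q^i)\} \geq \min\{\nu_Q(f), \nu_Q(g)\}$, using (V2) for $\nu$ itself. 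So the real content is multiplicativity (V1).

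For (V1), the standard strategy is to work in the graded algebra $\mathrm{gr}_\nu(K[x])$ and show that $\mathrm{in}_\nu(Q)$ is not a zero divisor; equivalently, that $Q$ is not $\nu$-divisible by any polynomial of smaller degree in a way that would let a ``collision'' occur. Concretely: first I would reduce to showing that for the $Q$-expansions $f = \sum f_i Q^i$ and $g = \sum g_j Q^j$, if we set $i_0 = \min\{i : \nu(f_i Q^i) = \nu_Q(f)\}$ and $j_0 = \min\{j : \nu(g_j Q^j) = \nu_Q(g)\}$, then in the product $fg = \sum_k (\sum_{i+j=k} f_i g_j) Q^k$ the coefficient of $Q^{i_0+j_0}$, after carrying out the necessary Euclidean divisions by $Q$ to bring it into $Q$-expansion form, still has $\nu$-value exactly $\nu_Q(f) + \nu_Q(g)$, and no lower term appears. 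The key technical input is that $\deg(f_i), \deg(g_j) < \deg(Q)$, so $f_{i_0} g_{j_0}$ has degree $< 2\deg(Q)$; writing $f_{i_0}g_{j_0} = c_0 + c_1 Q$ with $\deg c_0, \deg c_1 < \deg Q$, one needs that $\nu(f_{i_0} g_{j_0}) = \nu(f_{i_0}) + \nu(g_{j_0})$ — which requires a lemma of the form: \emph{$\nu$ restricted to polynomials of degree $< \deg Q$ is itself multiplicative up to the relevant degree bound}, because $\epsilon$ (equivalently $\delta$, by Proposition~\ref{Profmagica}) is controlled on low-degree polynomials by the key polynomial property. This is where the defining implication $\epsilon(f) \geq \epsilon(Q) \Rightarrow \deg(f) \geq \deg(Q)$ gets used: its contrapositive says every polynomial of degree $< \deg Q$ has $\epsilon$ strictly smaller than $\epsilon(Q)$, which pins down $\nu_Q = \nu$ on such polynomials and forces $\mathrm{in}_\nu(Q)$ to behave like a ``transcendental'' element over the graded ring generated by low-degree initial forms.

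I would organize the argument as: (1) a lemma that $\nu = \nu_Q$ on $\{f : \deg f < \deg Q\}$ and that $\nu$ is multiplicative there, deduced from the key polynomial property via $\epsilon = \delta$; (2) the observation that $\mathrm{in}_\nu(Q)$ is not a zero divisor in the subalgebra of $\mathrm{gr}_\nu(K[x])$ generated by initial forms of low-degree polynomials together with $\mathrm{in}_\nu(Q)$ — i.e., a "freeness" statement so that the leading term of a product of $Q$-expansions does not cancel; (3) a descending-induction or direct bookkeeping argument that combines (1) and (2) to conclude $\nu_Q(fg) = \nu_Q(f) + \nu_Q(g)$, handling the carries from $f_{i_0}g_{j_0} = c_0 + c_1 Q$ by noting $\nu(c_1 Q^{i_0+j_0+1}) > \nu_Q(f) + \nu_Q(g)$ cannot lower the value and $\nu(c_0 Q^{i_0+j_0})$ realizes it. The main obstacle I anticipate is step (2)/(3): ruling out a cancellation in which contributions $f_i g_j Q^{i+j}$ with $i + j = i_0 + j_0$ but $(i,j) \neq (i_0,j_0)$, together with carries from higher or lower terms, conspire to drop the total value below $\nu_Q(f)+\nu_Q(g)$ — this requires carefully tracking that all such off-diagonal terms have strictly larger $\nu$-value (by minimality of $i_0, j_0$) and that the Euclidean-division carries are always of strictly larger value, which is precisely the place the structure theory of key polynomials (and the fact that $\mathrm{in}_\nu(Q)$ generates a polynomial subring) does the work.
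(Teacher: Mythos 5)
This paper does not actually prove the proposition; it is imported verbatim from Proposition 2.6 of \cite{f}, so there is no in-text argument to compare against. Measured against the proof in \cite{f}, your strategy is essentially the right one: (V2) is disposed of exactly as you do, and (V1) is reduced to controlling the $Q$-expansion of products $f_{i}g_{j}$ of coefficients of degree $<\deg Q$, with the key-polynomial hypothesis entering through $\epsilon=\delta$ (Proposition~\ref{Profmagica}) and the multiplicativity of roots, which gives $\epsilon(ab)=\max\{\epsilon(a),\epsilon(b)\}<\epsilon(Q)$ for $\deg a,\deg b<\deg Q$. Your choice of \emph{minimal} indices $i_0,j_0$ is also the correct device for showing that every off-diagonal term $f_ig_jQ^{i+j}$ with $i+j=i_0+j_0$, and every carry arriving from level $i_0+j_0-1$, has value strictly above $\nu_Q(f)+\nu_Q(g)$.

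The gap is that the proof is never carried out where the actual content lies. Everything hinges on the lemma you name but defer: for $a,b$ of degree $<\deg Q$ with $Q$-expansion $ab=c_0+c_1Q$, one must establish $\nu(c_0)=\nu(a)+\nu(b)$ together with the \emph{strict} inequality $\nu(c_1)+\nu(Q)>\nu(a)+\nu(b)$. The non-strict version, which is all that follows formally from $\nu_Q(ab)=\nu(ab)$, does not suffice: if $\nu(c_1Q)$ could equal $\nu(ab)$, the carry $c_{i_0j_0,1}Q^{i_0+j_0+1}$ produced by the diagonal term itself would sit at value exactly $\nu_Q(f)+\nu_Q(g)$ at level $i_0+j_0+1$, and the minimality of $i_0,j_0$ gives you no control there --- so the cancellation you say you want to rule out is not ruled out. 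Deriving this strictness from the defining implication $\epsilon(f)\geq\epsilon(Q)\Rightarrow\deg f\geq\deg Q$ is precisely the technical heart of Proposition 2.6 of \cite{f}, and your text explicitly flags it as an anticipated obstacle rather than resolving it. A smaller slip: the lemma you request, that ``$\nu$ restricted to polynomials of degree $<\deg Q$ is itself multiplicative,'' is vacuous, since $\nu$ is by hypothesis a valuation on all of $K[x]$; what you actually need is that the truncation $\nu_Q$ agrees with $\nu$ on such products, plus the strict control of the carry term. As written, the proposal is a correct road map with the central leg of the journey missing.
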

We observe that the converse of the above Proposition is not true, i.e., there exists a valuation $\nu$ on $K[x]$ and polynomial $q\in K[x]$ such that $\nu_q$ is a valuation, but $q$ is not a key polynomial (Corollary 2.4 of \cite{g}).

For a key polynomial $Q\in K[x]$, let
\[
\alpha(Q):=\min\{\deg(f)\mid \nu_Q(f)< \nu(f)\}, \mbox{ and }
\]
\[
\Psi(Q):=\{f\in K[x]\mid f\mbox{ is monic},\nu_Q(f)< \nu(f)\mbox{ and }\alpha(Q)=\deg (f)\}.
\]

\begin{Teo}[Theorem 2.12 of \cite{f}]\label{definofkeypol}
A monic polynomial $Q$ is a key polynomial if and only if there exists a key polynomial $Q_-\in K[x]$ such that either $Q\in \Psi(Q_-)$ or the following conditions are satisfied:
\begin{description}
\item[(K1)] $\alpha(Q_-)=\deg (Q_-)$
\item[(K2)] the set $\{\nu(Q')\mid Q'\in\Psi(Q_-)\}$ does not contain a maximal element
\item[(K3)] $\nu_{Q'}(Q)<\nu(Q)$ for every $Q'\in \Psi(Q_-)$
\item[(K4)] $Q$ has the smallest degree among polynomials satisfying \textbf{(K3)}.
\end{description}
\end{Teo}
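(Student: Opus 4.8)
\textit{Proof proposal.} The plan is to prove both implications. Throughout I would work with the fixed extension $\mu$ and use Proposition~\ref{Profmagica} to replace $\epsilon$ by $\delta$: a monic $P$ is a key polynomial exactly when $\delta(g)<\delta(P)$ for every $g\in K[x]$ with $\deg g<\deg P$ (the contrapositive of the defining implication, since $\epsilon(cg)=\epsilon(g)$ for constants $c$). I would also use repeatedly that $\nu_q(f)\le\nu(f)$, with equality whenever $\deg f<\deg q$, and that if $Q_-$ is a key polynomial then $\nu_{Q_-}$ is a valuation (Proposition 2.6 of~\cite{f}) with $\nu=\nu_{Q_-}$ on every polynomial of degree $<\alpha(Q_-)$. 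From the last fact comes the reduction lemma that drives everything: if $\deg g<\alpha(Q_-)$ then $\deg\partial_b g<\alpha(Q_-)$ for all $b$, so $\epsilon(g)$ is computed entirely from $\nu_{Q_-}$; hence, in degrees below $\alpha(Q_-)$, being a key polynomial for $\nu$ is the same as being one for $\nu_{Q_-}$.

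For $(\Leftarrow)$ in the augmentation case ($Q_-$ a key polynomial, $Q\in\Psi(Q_-)$, so $\deg Q=\alpha(Q_-)$ and $\nu_{Q_-}(Q)<\nu(Q)$), take $g$ monic with $\deg g<\deg Q$; I want $\delta(g)<\delta(Q)$. By the reduction lemma $\delta(g)$ depends only on $\nu_{Q_-}$, and the roots of $g$ --- and of the coefficients of the relevant low-degree $q$-expansions --- are $\mu$-far from $x$ because $Q_-$ is a key polynomial. Estimating $\mu(Q(a))$ at such roots $a$ through the $Q_-$-expansion of $Q$ (and using that $Q$ is irreducible over $K$, which follows from $\deg Q=\alpha(Q_-)$ and multiplicativity of $\nu_{Q_-}$), one sees that the strict jump $\nu_{Q_-}(Q)<\nu(Q)$ is exactly what forces some root of $Q$ to be strictly $\mu$-closer to $x$ than every root of $g$; hence $\delta(g)<\delta(Q)$. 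For $(\Leftarrow)$ in the limit case (K1)--(K4): by (K1), $\Psi(Q_-)$ consists of monic polynomials of degree $\deg Q_-$, so the augmentation case just proved shows each $Q'\in\Psi(Q_-)$ is a key polynomial (with $\nu_{Q'}$ a valuation); by (K2) the $\nu(Q')$ form an unbounded increasing family, so $\{\nu_{Q'}\}$ behaves like a convergent family of iterated valuations below degree $\deg Q$ --- for $\deg g<\deg Q$ one gets $\nu_{Q'}(g)=\nu(g)$ once $\nu(Q')$ is large enough; and (K3), (K4) say $\deg Q$ is the least degree at which every $\nu_{Q'}$ still fails. Putting these together yields $\delta(g)<\delta(Q)$ for all $g$ of smaller degree, so $Q$ is a key polynomial.

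For $(\Rightarrow)$, let $Q$ be a key polynomial that is not of minimal degree (the minimal-degree key polynomials, coming from the monomial truncation, are checked by hand). If there is a key polynomial $Q_-$ with $\nu_{Q_-}(Q)<\nu(Q)$ and $\alpha(Q_-)=\deg Q$, then $Q\in\Psi(Q_-)$ and we are done. Otherwise I would construct $Q_-$ satisfying (K1)--(K4) by following a chain of successive augmentations of strictly increasing $\delta$-value sitting below $Q$: degrees along such a chain are non-decreasing and bounded by $\deg Q$, hence stabilise at some $\deg Q_-<\deg Q$; the polynomials of that stabilised degree form a cofinal part of $\Psi(Q_-)$, the failure to be in the augmentation case forces this family to have no maximal value --- which is (K2) --- and (K3), (K4) merely re-express that $\deg Q$ is the first degree at which $Q$ escapes all the $\nu_{Q'}$.

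The step I expect to be the main obstacle is the $(\Rightarrow)$ limit alternative: showing that a key polynomial which is not an augmentation must genuinely be the limit of an infinite chain of augmented valuations of a fixed degree, attached to the right $Q_-$ and with no maximal value in $\Psi(Q_-)$. This rests on careful bookkeeping with $q$-expansions, the truncations $\nu_{Q'}$, the value group, and the well-ordering arguments that forbid any further refinement of the chain. A secondary delicate point is the estimate in the augmentation case showing that the single jump $\nu_{Q_-}(Q)<\nu(Q)$ pushes $\delta(Q)$ strictly above $\delta(g)$ for all smaller $g$; this is precisely where one needs $Q_-$ to be an honest key polynomial and not merely a polynomial whose truncation $\nu_{Q_-}$ happens to be a valuation.
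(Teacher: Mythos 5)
The survey you are reading does not prove this theorem at all: it is quoted from Theorem 2.12 of \cite{f}, so there is no in-paper argument to compare against, and your proposal has to stand on its own. As an outline it correctly identifies several of the right ingredients: the translation $\epsilon=\delta$ via Proposition \ref{Profmagica}, the fact that $\nu$ and $\nu_{Q_-}$ agree on all polynomials of degree less than $\alpha(Q_-)$ (hence on all their derivatives), and the role of \textbf{(K4)} in guaranteeing that every $g$ with $\deg g<\deg Q$ is fixed by some truncation $\nu_{Q'}$. But it is not a proof, because the two steps that carry all the content are asserted rather than established. In the augmentation case you claim that the single inequality $\nu_{Q_-}(Q)<\nu(Q)$, together with $\deg Q=\alpha(Q_-)$, ``forces some root of $Q$ to be strictly $\mu$-closer to $x$ than every root of $g$''; this is precisely the comparison between truncation defects and $\epsilon$-values (statements of the form $\nu_{Q'}(f)<\nu(f)\Rightarrow\epsilon(f)>\epsilon(Q')$) that occupies the technical core of \cite{f} just before Theorem 2.12, and no actual estimate of $\mu(Q(a))$ through the $Q_-$-expansion is carried out. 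In the limit case, ``behaves like a convergent family of iterated valuations'' hides both the needed monotonicity statement (if $\nu_{Q'}(g)=\nu(g)$ and $\nu(Q'')\geq\nu(Q')$ for $Q',Q''\in\Psi(Q_-)$, then $\nu_{Q''}(g)=\nu(g)$) and, again, the unproved passage from ``$g$ is fixed by some $\nu_{Q'}$ while $Q$ is fixed by none'' to the inequality $\delta(g)<\delta(Q)$.

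The forward direction is the weakest part, as you yourself concede. You must produce, for an arbitrary key polynomial $Q$ not of minimal degree, a concrete $Q_-$ with either $Q\in\Psi(Q_-)$ or \textbf{(K1)}--\textbf{(K4)}. Your plan of ``following a chain of successive augmentations of strictly increasing $\delta$-value sitting below $Q$'' does not explain why such a chain exists, why its degrees stabilise strictly below $\deg Q$, why the stabilised stage yields $Q\in\Psi(Q_-)$ in the terminating case, or why non-termination produces exactly the absence of a maximal element in $\{\nu(Q')\mid Q'\in\Psi(Q_-)\}$ rather than some other degeneration. These points require the well-ordering of key polynomials by $\epsilon$ and a careful analysis of the sets $\Psi(\cdot)$, none of which is sketched. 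In short: the road map is sensible, but there is a genuine gap at every point where a quantitative argument is required, and those points are the theorem.
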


\begin{Def}
A key polynomial $Q$ is called a (Spivakovsky's) \textbf{limit key polynomial} if the conditions \textbf{(K1) - (K4)} of the theorem above are satisfied.
\end{Def}
For a set $\textbf{Q}\subseteq K[x]$ we denote by $\N^\textbf{Q}$ the set of mappings $\lambda:\textbf{Q}\lra \N$ such that $\lambda(q)=0$ for all, but finitely many $q\in\textbf{Q}$. For $\lambda\in \N^\textbf{Q}$ we denote
\[
\textbf{Q}^\lambda:=\prod_{q\in\textbf{Q}}q^{\lambda(q)}\in K[x].
\]
\begin{Def}
A set $\textbf{Q}\subseteq K[x]$ is called a complete set for $\nu$ if for every $p\in K[x]$ there exists $q\in \textbf{Q}$ such that
\begin{equation}\label{eqabotdegpol}
\deg(q)\leq\deg(p)\mbox{ and }\nu(p)=\nu_q(p).
\end{equation}
\end{Def}

\begin{Prop}\label{propsobrecomp}
If $\textbf Q\subseteq K[x]$ is a complete set for $\nu$, then for every $p\in K[x]$ there exist $a_1,\ldots,a_r\in K$ and $\lambda_1,\ldots, \lambda_r\in \N^\textbf{Q}$, such that
\[
p=\sum_{i=1}^ra_i\textbf{Q}^{\lambda_i}\mbox{ with }\nu\left(a_i\textbf{Q}^{\lambda_i}\right)\geq \nu(p),\mbox{ for every }i, 1\leq i\leq r,
\]
and the elements $Q\in\textbf{Q}$ appearing in the decomposition of $p$ (i.e., for which $\lambda_i(Q)\neq 0$ for some $i$, $1\leq i\leq r$) have degree smaller or equal than $\deg(p)$. In particular, for every $\beta\in\nu(K[x])$, the additive group $P_\beta$ is generated by the elements $a \textbf Q^\lambda\in P_\beta$ where $a\in K$ and $\lambda\in\N^\textbf{Q}$.
\end{Prop}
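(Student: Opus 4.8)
The plan is to prove the displayed decomposition by strong induction on $\deg(p)$, and then to read off the assertion about $P_\beta$ as an immediate corollary. The base case $\deg(p)\le 0$ (including $p=0$, where the sum is empty) will be handled by taking $r=1$, $a_1=p$ and $\lambda_1$ the zero mapping, so that $\textbf{Q}^{\lambda_1}=1$, no element of $\textbf{Q}$ appears, and $\nu(a_1\textbf{Q}^{\lambda_1})=\nu(p)$.

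For the inductive step I would fix $p$ with $d:=\deg(p)\ge 1$, assume the statement for all polynomials of degree $<d$, and use that $\textbf{Q}$ is complete to choose $q\in\textbf{Q}$ with $\deg(q)\le d$ and $\nu(p)=\nu_q(p)$. Passing to the $q$-expansion $p=f_0+f_1q+\ldots+f_nq^n$, each nonzero coefficient satisfies $\deg(f_j)<\deg(q)\le d$, hence $\deg(f_j)\le d-1$; and since $\nu$ is multiplicative, $\nu(f_jq^j)=\nu(f_j)+j\,\nu(q)$, so that $\nu(p)=\nu_q(p)=\min_j\nu(f_jq^j)$ forces $\nu(f_jq^j)\ge\nu(p)$ for every $j$. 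Because $\deg(f_j)<d$, the induction hypothesis applies to each nonzero $f_j$: there are $a_{j,i}\in K$ and $\lambda_{j,i}\in\N^{\textbf{Q}}$ with $f_j=\sum_i a_{j,i}\textbf{Q}^{\lambda_{j,i}}$, with $\nu(a_{j,i}\textbf{Q}^{\lambda_{j,i}})\ge\nu(f_j)$, and with every $Q\in\textbf{Q}$ occurring there of degree $\le\deg(f_j)<\deg(q)$.

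The final step is to substitute and collect. Introducing $e_q\in\N^{\textbf{Q}}$ with $e_q(q)=1$ and $e_q(q')=0$ for $q'\ne q$, one has $\textbf{Q}^{\lambda}q^{j}=\textbf{Q}^{\lambda+je_q}$, so (after reindexing the pairs $(j,i)$)
\[
p=\sum_{j}\Big(\sum_i a_{j,i}\textbf{Q}^{\lambda_{j,i}}\Big)q^{j}=\sum_{j,i}a_{j,i}\,\textbf{Q}^{\lambda_{j,i}+je_q}
\]
is a decomposition of the required form. The valuation bound follows from $\nu(a_{j,i}\textbf{Q}^{\lambda_{j,i}+je_q})=\nu(a_{j,i}\textbf{Q}^{\lambda_{j,i}})+j\,\nu(q)\ge\nu(f_j)+j\,\nu(q)=\nu(f_jq^j)\ge\nu(p)$, and the degree bound from the observation that every $Q\in\textbf{Q}$ appearing on the right is either $q$, with $\deg(q)\le d=\deg(p)$, or one inherited from some $f_j$, with $\deg(Q)<\deg(q)\le\deg(p)$. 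This closes the induction; and then for $\beta\in\nu(K[x])$ and $p\in P_\beta$ the summands $a_i\textbf{Q}^{\lambda_i}$ of the decomposition all satisfy $\nu(a_i\textbf{Q}^{\lambda_i})\ge\nu(p)\ge\beta$, hence lie in $P_\beta$, which yields the generation statement.

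The step I expect to require the most care is making the induction genuinely well-founded: everything hinges on the fact that the coefficients $f_j$ of the $q$-expansion have degree \emph{strictly} below $\deg(q)\le\deg(p)$, which is exactly what makes the induction hypothesis available for them. Beyond this, the argument is just bookkeeping together with the multiplicativity of $\nu$; notice that one never uses that $q$ is a key polynomial, only that $\textbf{Q}$ is complete.
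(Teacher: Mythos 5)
Your proof is correct and follows essentially the same route as the paper: induction on $\deg(p)$, using completeness to pick $q\in\textbf{Q}$ with $\deg(q)\le\deg(p)$ and $\nu_q(p)=\nu(p)$, applying the inductive hypothesis to the coefficients of the $q$-expansion, and then collecting exponents via the shift $\lambda\mapsto\lambda+je_q$ (the paper's $\lambda'_{ij}$). The only cosmetic difference is that you take constants as the base case where the paper starts at degree $1$; your choice is in fact slightly cleaner, since the paper's inductive step also needs the statement for the constant coefficients.
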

\begin{Obs}
The latter condition on the proposition above appears as the definition of \textit{generating sequence} in various works.
\end{Obs}
\begin{proof}[Proof of Proposition \ref{propsobrecomp}]
We will prove our result by induction on the degree of $p$. If $\deg(p)=1$, then $p=x-a$ for some $a\in K$. By our assumption, there exists $x-b\in \textbf{Q}$ such that
\[
\beta:=\nu(x-a)=\nu_{x-b}(x-a)=\min\{\nu(x-b),\nu(b-a)\}.
\]
This implies that $\nu(x-b)\geq \beta$, $\nu(b-a)\geq \beta$ and that $p=(x-b)+(b-a)$, which is what we wanted to prove.

Assume now that for $k\in\N$, for every $p\in K[x]$ of $\deg(p)<k$ our result is satisfied. Let $p$ be a polynomial of degree $k$. Since $\textbf{Q}$ is a complete set for $\nu$, there exists $q\in \textbf{Q}$ such that $\deg(q)\leq \deg(p)$ and $\nu_q(p)=\nu(p)$. Let
\[
p=p_0+p_1q+\ldots+p_sq^s
\]
be the $q$-expansion of $p$. Since $\deg(q)\leq \deg(p)$, we have $\deg(p_i)<\deg(p)=k$ for every $i$, $1\leq i\leq s$. By the induction hypothesis, there exist
\[
a_{11},\ldots, a_{1r_1},\ldots,a_{s1},\ldots,a_{sr_s}\in K\mbox{ and }\lambda_{11},\ldots, \lambda_{1r_1},\ldots,\lambda_{s1},\ldots,\lambda_{sr_s}\in \N^\textbf{Q},
\]
such that for every $i$, $0\leq i\leq s$,
\[
p_i=\sum_{j=1}^{r_i}a_{ij}\textbf{Q}^{\lambda_{ij}}\mbox{ with }\nu\left(a_{ij}\textbf{Q}^{\lambda_{ij}}\right)\geq \nu(p_i)\mbox{ for every }j, 1\leq j\leq r_i,
\]
and $\deg(Q)\leq \deg(p_i)\leq \deg(p)$ for every polynomial appearing in the decompostion of $p_i$.
This implies that
\[
p=\sum_{i=0}^s\left(\sum_{j=1}^{r_i}a_{ij}\textbf{Q}^{\lambda_{ij}}\right)q^i=\sum_{0\leq i\leq s, 1\leq j\leq r_i}a_{ij}\textbf Q^{\lambda_{ij}'},
\]
where
\begin{displaymath}
\lambda'_{ij}(q')=\left\{
\begin{array}{ll}
\lambda_{ij}(q')+i&\mbox{ if }q'=q\\
\lambda_{ij}(q')&\mbox{ if }q'\neq q
\end{array}
\right..
\end{displaymath}
Moreover, since $\nu_q(p)=\displaystyle\min_{0\leq i\leq s}\{\nu(p_iq^i)\}=\nu(p)$ and
\[
\nu\left(a_{ij}\textbf{Q}^{\lambda_{ij}}\right)\geq \nu(p_i),\mbox{ for every }i, 0\leq i\leq n\mbox{ and }j, 1\leq j\leq r_i,
\]
we have
\[
\nu(p)\leq \nu(p_i)+i\nu(q)\leq \nu\left(a_{ij}\textbf{Q}^{\lambda_{ij}}\right)+i\nu(q)=\nu\left(a_{ij}\textbf{Q}^{\lambda'_{ij}}\right),
\]
for every $i$, $0\leq i\leq s$ and $j$, $1\leq j\leq r_i$, which is what we wanted to prove.
\end{proof}

The next result gives us a converse for Proposition \ref{propsobrecomp}.

\begin{Prop}
Assume that $\textbf{Q}$ is a subset of $K[x]$ with the following properties:
\begin{itemize}
\item $\nu_Q$ is a valuation for every $Q\in\textbf{Q}$;
\item for every finite subset $\mathcal F\subseteq \textbf{Q}$, there exists $Q\in \mathcal F$ such that $\nu_Q(Q')=\nu(Q')$ for every $Q'\in\mathcal{F}$;
\item for every $p\in K[x]$ there exist $a_1,\ldots,a_r\in K$ and $\lambda_1,\ldots,\lambda_r\in\N^\textbf{Q}$ such that 
\[
p=\sum_{i=1}^ra_i\textbf{Q}^{\lambda_i}\mbox{ with }\nu\left(a_i\textbf{Q}^{\lambda_i}\right)\geq \nu(p),\mbox{ for every }i, 1\leq i\leq r,
\]
and $\deg(Q)\leq\deg(p)$ for every $Q\in \textbf{Q}$ for which $\lambda_i(Q)\neq 0$ for some $i$, $1\leq i\leq r$.
\end{itemize}
Then $\textbf Q$ is a complete set for $\nu$.
\end{Prop}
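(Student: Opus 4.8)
The goal is to show that the three stated hypotheses on $\textbf{Q}$ force it to be a complete set for $\nu$, meaning that for every $p \in K[x]$ there is some $q \in \textbf{Q}$ with $\deg(q) \leq \deg(p)$ and $\nu(p) = \nu_q(p)$. The plan is to fix $p$ and use the third hypothesis to write $p = \sum_{i=1}^r a_i \textbf{Q}^{\lambda_i}$ with $\nu(a_i \textbf{Q}^{\lambda_i}) \geq \nu(p)$ for all $i$, and with every $Q$ occurring in the $\lambda_i$ having degree at most $\deg(p)$. Only finitely many elements of $\textbf{Q}$ occur among the $\lambda_i$; collect them into a finite set $\mathcal{F}$, and if necessary enlarge $\mathcal{F}$ by throwing in finitely many more elements of $\textbf{Q}$ whose truncations I will want to control (in practice $\mathcal{F}$ can just be the set of $Q$ appearing in the decomposition). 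By the second hypothesis there is a distinguished $Q \in \mathcal{F}$ with $\nu_Q(Q') = \nu(Q')$ for every $Q' \in \mathcal{F}$; this $Q$ will be the witness, so one must check $\deg(Q) \leq \deg(p)$ and $\nu_Q(p) = \nu(p)$.

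The degree bound is immediate: $Q$ appears in the decomposition of $p$ (or can be chosen to), so by hypothesis $\deg(Q) \leq \deg(p)$. For the valuation equality, the inequality $\nu_Q(p) \leq \nu(p)$ holds for any polynomial (it is the general fact that truncation can only decrease values, which follows from \textbf{(V2)} applied to the $Q$-expansion; more precisely $\nu(p) = \nu(\sum p_i Q^i) \geq \min_i \nu(p_i Q^i) \geq \min_i \nu_Q(p_i Q^i) = \nu_Q(p)$ when $\nu_Q$ is a valuation, and actually the first inequality already gives what we need since $\deg p_i < \deg Q$ forces $\nu(p_i) = \nu_Q(p_i)$). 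So the content is the reverse inequality $\nu_Q(p) \geq \nu(p)$. Here I would use the $Q$-additive decomposition: since $\nu_Q$ is a valuation (first hypothesis), $\nu_Q(p) \geq \min_i \nu_Q(a_i \textbf{Q}^{\lambda_i})$, and $\nu_Q(a_i \textbf{Q}^{\lambda_i}) = \nu(a_i) + \sum_{q} \lambda_i(q)\nu_Q(q)$. For every $q$ appearing here, $q \in \mathcal{F}$, so $\nu_Q(q) = \nu(q)$ by the choice of $Q$; hence $\nu_Q(a_i \textbf{Q}^{\lambda_i}) = \nu(a_i \textbf{Q}^{\lambda_i}) \geq \nu(p)$. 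Taking the minimum over $i$ gives $\nu_Q(p) \geq \nu(p)$, and combined with the trivial inequality we get equality.

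The one genuinely delicate point — and what I expect to be the main obstacle — is justifying the step $\nu_Q(\textbf{Q}^\lambda) = \sum_q \lambda(q)\,\nu_Q(q)$ and, relatedly, that $\nu_Q$ is multiplicative on the products $\textbf{Q}^\lambda$; this is exactly where one needs $\nu_Q$ to be a genuine valuation (so \textbf{(V1)} applies termwise across the product) rather than merely the min-formula on $Q$-expansions, which is why the first hypothesis is stated. One must also be slightly careful that $\nu_Q$ being a valuation lets us pass from $p = \sum_i a_i \textbf{Q}^{\lambda_i}$ (an identity in $K[x]$, not a $Q$-expansion) to the inequality $\nu_Q(p) \geq \min_i \nu_Q(a_i\textbf{Q}^{\lambda_i})$ — this is just \textbf{(V2)} for $\nu_Q$, so it is fine. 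A final bookkeeping remark: if $Q$ happens not to occur in the decomposition of $p$ at all, then every $\textbf{Q}^{\lambda_i}$ is a product of elements strictly other than $Q$, all of degree $\leq \deg p$; the argument above still goes through verbatim, and the degree bound $\deg(Q) \leq \deg(p)$ must then be arranged by including $Q$ in $\mathcal{F}$ only among elements of degree $\leq \deg p$ — but since every element of $\textbf{Q}$ occurring in any decomposition has degree $\leq \deg p$, and we may always apply the second hypothesis to the finite set $\mathcal{F}$ of those elements, the chosen $Q$ automatically satisfies $\deg(Q) \leq \deg(p)$.
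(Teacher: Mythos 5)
Your proof is correct and follows essentially the same route as the paper: decompose $p=\sum_i a_i\textbf{Q}^{\lambda_i}$, apply the second hypothesis to the finite set $\mathcal F$ of elements occurring in the $\lambda_i$ to get a distinguished $Q$, and sandwich $\nu(p)\leq\min_i\nu_Q\left(a_i\textbf{Q}^{\lambda_i}\right)\leq\nu_Q(p)\leq\nu(p)$. In fact you spell out the step $\nu_Q\left(a_i\textbf{Q}^{\lambda_i}\right)=\nu\left(a_i\textbf{Q}^{\lambda_i}\right)$ (via multiplicativity of $\nu_Q$ and $\nu_Q(q)=\nu(q)$ for $q\in\mathcal F$) in more detail than the paper, which simply asserts it.
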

\begin{proof}
Take any polynomial $p\in K[x]$ and let $\beta:=\nu(p)$. Then, there exist $a_1,\ldots,a_r\in K$ and $\lambda_1,\ldots,\lambda_r\in\N^\textbf{Q}$ such that 
\[
p=\sum_{i=1}^ra_i\textbf{Q}^{\lambda_i}\mbox{ with }\nu\left(a_i\textbf{Q}^{\lambda_i}\right)\geq \beta,\mbox{ for every }i, 1\leq i\leq r,
\]
and $\deg(Q)\leq\deg(p)$ for every $Q\in \textbf{Q}$ for which $\lambda_i(Q)\neq 0$ for some $i$, $1\leq i\leq r$.
Let
\[
\mathcal F:=\{Q\in \textbf{Q}\mid \lambda_i(Q)\neq 0\mbox{ for some }i,1\leq i\leq n\}.
\]
Since $\mathcal F$ is finite, there exists $Q\in\mathcal F$ such that $\nu_Q(Q')=\nu(Q')$ for every $Q'\in \mathcal F$. In particular, $\nu\left(a_i\textbf{Q}^{\lambda_i}\right)=\nu_Q\left(a_i\textbf{Q}^{\lambda_i}\right)$ for every $i$, $1\leq i\leq n$. Then
\[
\beta\leq \min_{1\leq i\leq n}\left\{\nu\left(a_i\textbf Q^{\lambda_i}\right)\right\}=\min_{1\leq i\leq n}\left\{\nu_Q\left(a_i\textbf Q^{\lambda_i}\right)\right\}\leq \nu_Q(p)\leq \nu(p)=\beta.
\]
Therefore, $\nu_Q(p)=\nu(p)$ and this concludes our proof.
\end{proof}

\begin{Teo}[Theorem 1.1 of \cite{f}]\label{Theoremexistencecompleteseqkpol}
Let $\nu$ be a valuation on $K[x]$. Then there exists a set $\textbf{Q}\subseteq K[x]$ of key polynomials, well-ordered (with the order $Q<Q'$ if and only if $\epsilon(Q)<\epsilon(Q')$), such that $\textbf{Q}$ is complete set for $\nu$.
\end{Teo}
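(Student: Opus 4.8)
The plan is to construct $\textbf{Q}$ by transfinite recursion, maintaining at every stage a set of key polynomials well-ordered by $\epsilon$, and stopping only once the completeness condition has been achieved. I would start with $Q_1:=x$: any monic linear polynomial is a key polynomial, since the implication $\epsilon(f)\ge\epsilon(Q_1)\Rightarrow\deg(f)\ge\deg(Q_1)=1$ is vacuous for constant $f$ and trivial otherwise.

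For the recursion step, let $\textbf{Q}_\alpha$ be the set built so far. If $\textbf{Q}_\alpha$ has a largest element $Q$ with $\nu_Q=\nu$, I stop and set $\textbf{Q}:=\textbf{Q}_\alpha$. If $\textbf{Q}_\alpha$ has a largest element $Q$ with $\nu_Q\ne\nu$, then $\Psi(Q)\ne\emptyset$ and I extend using Theorem \ref{definofkeypol}: if $\{\nu(Q')\mid Q'\in\Psi(Q)\}$ has a maximum I adjoin a single $Q_+\in\Psi(Q)$ attaining it (a key polynomial by the first alternative of that theorem, with $\epsilon(Q_+)=\delta(Q_+)>\delta(Q)=\epsilon(Q)$ by Proposition \ref{Profmagica}); if it has no maximum I instead adjoin a whole sequence of elements of $\Psi(Q)$ cofinal in $\{\nu(Q')\mid Q'\in\Psi(Q)\}$, each a key polynomial of strictly increasing $\epsilon$. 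At a limit stage $\textbf{Q}_\alpha$ has no largest element; if its degrees are unbounded I will argue below that $\textbf{Q}_\alpha$ is already complete, and otherwise they stabilize at some $d$ and the top of $\textbf{Q}_\alpha$ is a family cofinal in $\Psi(Q_-)$ for an earlier key polynomial $Q_-$ of degree $d$ with $\alpha(Q_-)=d$, so that conditions \textbf{(K1)} and \textbf{(K2)} hold; a polynomial $Q_+$ of smallest degree satisfying \textbf{(K3)} is then a limit key polynomial by Theorem \ref{definofkeypol}, with $\deg(Q_+)>d$, and I adjoin it. Since $\textbf{Q}\subseteq K[x]$ is a set and each successor step adds a polynomial, the recursion halts, and $\textbf{Q}$ is well-ordered by $\epsilon$ by construction.

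For completeness, fix a nonconstant $p\in K[x]$, put $d=\deg(p)$, and let $\textbf{Q}_{\le d}:=\{q\in\textbf{Q}\mid\deg(q)\le d\}$; since degrees are nondecreasing along $\textbf{Q}$ this is an initial segment, on which $q\mapsto\nu_q(p)$ is nondecreasing and bounded above by $\nu(p)$. Suppose, for a contradiction, that $\nu_q(p)<\nu(p)$ for all $q\in\textbf{Q}_{\le d}$. If $\textbf{Q}_{\le d}=\textbf{Q}$, the recursion stopped at a last element $q_0$ with $\nu_{q_0}=\nu$, contradicting $\nu_{q_0}(p)<\nu(p)$. Otherwise set $m:=\min(\textbf{Q}\setminus\textbf{Q}_{\le d})$, so $\deg(m)>d$. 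If $m$ is the successor of $q_0:=\max\textbf{Q}_{\le d}$, then $\nu_{q_0}(p)<\nu(p)$ forces $\alpha(q_0)\le d$, so $m$, being chosen from $\Psi(q_0)$, has degree $\alpha(q_0)\le d$, a contradiction. If $m$ is a limit point, it was adjoined over a family cofinal in some $\Psi(Q_-)\subseteq\textbf{Q}_{\le d}$; cofinality and the monotonicity of truncations force $\nu_{Q'}(p)<\nu(p)$ for every $Q'\in\Psi(Q_-)$, so $p$ satisfies \textbf{(K3)}, and then $\deg(m)>d=\deg(p)$ contradicts the minimality \textbf{(K4)} in the choice of $m$. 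Hence some $q\in\textbf{Q}_{\le d}$ satisfies $\nu_q(p)=\nu(p)$ with $\deg(q)\le\deg(p)$, the required witness. Applying this same argument at the least element of $\textbf{Q}$ of degree $>d$ also settles the claim left open in the limit step.

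The step I expect to be the real obstacle is the limit step: showing that whenever the construction yields an infinite strictly increasing family of key polynomials of a fixed degree $d$, it genuinely arises as a family cofinal in $\Psi(Q_-)$ for a key polynomial $Q_-$ of degree $d$ with $\alpha(Q_-)=d$ --- so that \textbf{(K1)} and \textbf{(K2)} are available --- and that the limit key polynomial it produces has degree strictly greater than $d$, which is what forces the recursion to terminate. This is the technical heart of \cite{f}.
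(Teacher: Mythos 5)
The survey itself does not reproduce a proof of this theorem---it is quoted from \cite{f}---but your construction is essentially the one carried out there: transfinite recursion starting from a linear polynomial, extension via $\Psi(Q)$ at successor stages and via a limit key polynomial when the degrees stabilize, termination by a cardinality bound, and completeness by the case analysis you describe. The architecture is right, and you have correctly located the hardest point, namely the structure of the limit stage.

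That said, there are two load-bearing facts that you use repeatedly without proof and that are not available from anything stated in this paper. The first is monotonicity of truncations along the sequence: that $Q<Q'$ in $\textbf{Q}$ (i.e.\ $\epsilon(Q)<\epsilon(Q')$) implies $\nu_Q(f)\leq\nu_{Q'}(f)$ for all $f\in K[x]$. You invoke it when you assert that $q\mapsto\nu_q(p)$ is nondecreasing on $\textbf{Q}_{\le d}$, and again, crucially, in the limit-point case when you pass from ``$\nu_q(p)<\nu(p)$ for the adjoined cofinal family'' to ``$\nu_{Q'}(p)<\nu(p)$ for \emph{every} $Q'\in\Psi(Q_-)$,'' which is what lets you verify \textbf{(K3)} for $p$ and contradict \textbf{(K4)}. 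Without this lemma the completeness argument collapses. The second is that $Q'\in\Psi(Q)$ forces $\epsilon(Q')>\epsilon(Q)$, together with the fact that on $\Psi(Q)$ the order by $\nu$-value agrees with the order by $\epsilon$; you need both for $\textbf{Q}$ to be well-ordered by $\epsilon$ and for ``cofinal in $\{\nu(Q')\mid Q'\in\Psi(Q)\}$'' to mean anything for the $\epsilon$-order. Your appeal to Proposition \ref{Profmagica} only yields $\epsilon=\delta$, not the strict inequality $\delta(Q_+)>\delta(Q)$. Both facts are proved in Section 2 of \cite{f}, so the argument can be completed, but as written these are genuine gaps rather than bookkeeping. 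Two smaller omissions: your recursion gives no instruction at a limit stage where \textbf{(K1)} and \textbf{(K2)} hold but no polynomial satisfies \textbf{(K3)} (there one must halt and show the set is already complete, and this case must also be added to your argument that $\textbf{Q}_{\le d}=\textbf{Q}$ forces a last element with $\nu_{q_0}=\nu$); and the claim that an $\omega$-chain of degree-$d$ key polynomials produced by repeatedly taking the maximum of $\nu$ on $\Psi(Q^{(i)})$ is automatically cofinal in a single $\Psi(Q_-)$ itself requires the monotonicity lemma above, so it cannot be quarantined to the one step you flag.
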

\begin{Obs}
The definition of \textit{complete set of key polynomials} presented in \cite{f} does not require that the degree of the polynomial $Q$ for which $\nu_Q(p)=\nu(p)$ is smaller or equal than $\deg(p)$. This assumption is important and we use this opportunity to fix the definition presented there. The proof presented in \cite{f}, guarantees that this additional property is satisfied, hence the theorem above is still valid.
\end{Obs}

The relation between Spivakovsky's key polynomial and MacLane-Vaqui\'e is given by the following.

\begin{Teo}[Theorem 23 of \cite{wsj}]
Let $Q$ be a Spivakovsky's key polynomial for $\nu$. Then $Q$ is a MacLane-Vaqui\'e key polynomial for $\nu_Q$.
\end{Teo}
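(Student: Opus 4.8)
The plan is to prove that a Spivakovsky key polynomial $Q$ for $\nu$ is a MacLane--Vaqui\'e key polynomial for the truncation $\nu_Q$, by translating the two defining conditions of a MacLane--Vaqui\'e key polynomial---$\nu_Q$-irreducibility and $\nu_Q$-minimality---into statements about the graded algebra $\mathrm{gr}_{\nu_Q}(K[x])$, and then verifying them using the structure of $\nu_Q$ and the hypothesis on $\epsilon$. First I would recall the basic structural fact about truncations: for the $Q$-expansion $f=f_0+f_1Q+\dots+f_nQ^n$ one has $\nu_Q(f)=\min_i\{\nu(f_i)+i\nu(Q)\}$, and moreover $\nu_Q$ restricted to $K[x]_{<\deg Q}$ agrees with $\nu$. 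The key preliminary observation is that $\mathrm{in}_{\nu_Q}(Q)$ is transcendental over the subalgebra generated by the images $\mathrm{in}_{\nu_Q}(f)$ with $\deg f<\deg Q$, which is essentially a restatement of the fact that $\nu_Q(f)$ is computed as a strict minimum in the generic situation; this is the engine behind both required properties.

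The second step is $\nu_Q$-minimality: I must show that if $Q\mid_{\nu_Q}f$, then $\deg f\ge\deg Q$. Suppose for contradiction that $\deg f<\deg Q$. Then $\mathrm{in}_{\nu_Q}(f)$ lies in the subalgebra generated by images of polynomials of degree $<\deg Q$, while $Q\mid_{\nu_Q}f$ would force $\mathrm{in}_{\nu_Q}(Q)$ to divide $\mathrm{in}_{\nu_Q}(f)$ in $\mathrm{gr}_{\nu_Q}(K[x])$; using the transcendence/degree bookkeeping above, this is impossible since multiplication by $\mathrm{in}_{\nu_Q}(Q)$ strictly raises the ``$Q$-degree'' of a homogeneous element. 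Here the hypothesis on $Q$ enters: because $Q$ is a Spivakovsky key polynomial, $\epsilon(f)\ge\epsilon(Q)\Rightarrow\deg f\ge\deg Q$, equivalently $\delta(f)\ge\delta(Q)\Rightarrow\deg f\ge\deg Q$ by Proposition~\ref{Profmagica}, which is exactly the minimality one needs to rule out a $\nu_Q$-divisor of smaller degree; one translates $Q\mid_{\nu_Q}f$ into an inequality on $\delta$ (or on the relevant $\nu_Q$-values) and invokes the implication.

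The third step is $\nu_Q$-irreducibility: if $Q\mid_{\nu_Q}(fg)$, I want $Q\mid_{\nu_Q}f$ or $Q\mid_{\nu_Q}g$. Writing the $Q$-expansions of $f$ and $g$ and reducing modulo the maximal homogeneous ideal generated by $\mathrm{in}_{\nu_Q}(Q)$---which makes sense precisely because $\mathrm{in}_{\nu_Q}(Q)$ is a prime (indeed transcendental) element---one reduces to the statement that $\mathrm{in}_{\nu_Q}(Q)$ is a prime element of $\mathrm{gr}_{\nu_Q}(K[x])$. This in turn follows from the fact that the quotient of $\mathrm{gr}_{\nu_Q}(K[x])$ by the ideal generated by $\mathrm{in}_{\nu_Q}(Q)$ is (a localization of) the graded algebra of $\nu|_{K[x]_{<\deg Q}}$ extended by a value for $Q$, and this is a domain because $\nu$ itself is a valuation; alternatively one argues directly with leading terms of $Q$-expansions, noting that the ``residual polynomial'' attached to $fg$ is the product of those attached to $f$ and $g$ and that $\mathrm{in}_{\nu_Q}(Q)\mid_{\nu_Q}$ something iff the corresponding residual polynomial vanishes, which is a prime condition.

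I expect the main obstacle to be the irreducibility step, specifically making rigorous the claim that $\mathrm{in}_{\nu_Q}(Q)$ is prime in $\mathrm{gr}_{\nu_Q}(K[x])$. The subtlety is that $\mathrm{gr}_{\nu_Q}(K[x])$ need not be a domain---its zero divisors are exactly the obstruction---so one cannot simply say ``$\mathrm{in}_{\nu_Q}(Q)$ is prime because the graded algebra is a UFD''; instead one must carefully identify the homogeneous structure, showing that an element is $\nu_Q$-divisible by $Q$ iff, after writing everything in $Q$-adic form and extracting the minimal-value layer, the resulting polynomial in $\overline{\mathrm{in}_{\nu_Q}(Q)}$ over the residue structure of $\nu|_{K[x]_{<\deg Q}}$ is divisible by that variable. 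The Spivakovsky-key-polynomial hypothesis guarantees that this residue structure behaves well---in particular that degree-$<\deg Q$ polynomials have their $\nu_Q$-value equal to their $\nu$-value and interact cleanly---so the bookkeeping closes, but keeping track of which homogeneous components survive the passage to the graded algebra is where the real care is needed.
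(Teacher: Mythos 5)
The paper does not prove this statement itself (it is quoted from Theorem 23 of \cite{wsj}), so your proposal has to stand on its own. Its overall architecture --- verify $\nu_Q$-minimality and $\nu_Q$-irreducibility via the structure of $\mathrm{gr}_{\nu_Q}(K[x])$ as a ``polynomial ring'' in $\mathrm{in}_{\nu_Q}(Q)$ over the part generated by initial forms of polynomials of degree less than $\deg Q$ --- is the right one, and the minimality half does go through. In fact it goes through more cheaply than you suggest: if $\deg f<\deg Q$ and $f\sim_{\nu_Q}Qh$, then the $Q$-expansion of $f-Qh$ has constant term $f$ itself, so $\nu_Q(f-Qh)\leq\nu(f)=\nu_Q(f)$, contradicting $\nu$-equivalence; no appeal to $\epsilon$ or $\delta$ is needed there, only uniqueness of $Q$-expansions and the definition of $\nu_Q$.

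The genuine gap is in the irreducibility step, and it sits exactly where you wave your hands (``the bookkeeping closes''). Two points. First, your stated worry is misplaced: since $Q$ is a Spivakovsky key polynomial, $\nu_Q$ \emph{is} a valuation (Proposition 2.6 of \cite{f}, quoted in the paper), so $\mathrm{gr}_{\nu_Q}(K[x])$ is automatically an integral domain; zero-divisors are not the obstruction. The obstruction is primality of $\mathrm{in}_{\nu_Q}(Q)$, and your identification of the quotient $\mathrm{gr}_{\nu_Q}(K[x])/(\mathrm{in}_{\nu_Q}(Q))$ with the graded algebra of the restriction of $\nu$ to low degrees is precisely the assertion that needs proof --- it is false for a general monic $q$ in place of $Q$. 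Concretely, one must show: for $f,g$ with $\deg f,\deg g<\deg Q$, writing $fg=c+dQ$ with $\deg c,\deg d<\deg Q$, the minimum $\min\{\nu(c),\nu(dQ)\}$ is attained at $\nu(c)$, so that $\mathrm{in}_{\nu_Q}(fg)$ is not a multiple of $\mathrm{in}_{\nu_Q}(Q)$. This is the one place the hypothesis $\epsilon(h)\geq\epsilon(Q)\Rightarrow\deg h\geq\deg Q$ must be converted into graded information, via two facts you never state: $\epsilon(fg)=\max\{\epsilon(f),\epsilon(g)\}$ (immediate from Proposition \ref{Profmagica}, since the roots of $fg$ are the union of the roots of $f$ and $g$), and the implication that if the minimum in the $Q$-expansion of $h$ is not attained at the constant term then $\epsilon(h)\geq\epsilon(Q)$. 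Combining these, $\epsilon(fg)<\epsilon(Q)$ forces the constant term to dominate, and primality follows. Without isolating and proving that last implication, the irreducibility argument does not close.
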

We also have the following.
\begin{Teo}[Theorem 26 of \cite{wsj}] Let $Q$ and $Q'$ be two Spivakovsky's key polynomials for $\nu$ such that $Q'\in\Psi(Q)$. Then $Q'$ is a MacLane-Vaqui\'e key polynomial for $\nu_Q$.
\end{Teo}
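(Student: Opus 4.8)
The plan is to verify the two defining properties of a MacLane--Vaqui\'e key polynomial for $\nu_Q$ --- that $Q'$ is $\nu_Q$-minimal and $\nu_Q$-irreducible --- and the engine for both will be an identification of the truncation $\nu_{Q'}$ with an augmented valuation built on $\nu_Q$. To set up, recall that $\nu_Q$ is a valuation (Proposition 2.6 of \cite{f}) and that $Q'$ is itself a Spivakovsky key polynomial for $\nu$: this is immediate from Theorem \ref{definofkeypol} applied with $Q_-=Q$, since $Q'\in\Psi(Q)$ by hypothesis; in particular $\nu_{Q'}$ is also a valuation (Proposition 2.6 of \cite{f} again). Write $d:=\deg(Q)$ and $d':=\deg(Q')=\alpha(Q)$. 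By the very definition of $\alpha(Q)$ as the least degree on which $\nu_Q$ differs from $\nu$, one has $\nu_Q(f)=\nu(f)$ for every $f$ of degree $<d'$; hence $\nu$, $\nu_Q$ and $\nu_{Q'}$ all coincide in degrees $<d'$, while $\nu_Q(Q')<\nu(Q')=\nu_{Q'}(Q')$ because $Q'\in\Psi(Q)$.

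The first step is the identity
\[
\nu_{Q'}\;=\;\bigl[\,\nu_Q;\ \nu_{Q'}(Q')=\nu(Q')\,\bigr].
\]
Indeed, for any $g\in K[x]$ with $Q'$-expansion $g=g_0+g_1Q'+\cdots+g_sQ'^{\,s}$ every $g_i$ has degree $<d'$, so $\nu_Q(g_i)=\nu(g_i)$, and therefore
\[
\nu_{Q'}(g)=\min_{0\le i\le s}\{\nu(g_i)+i\,\nu(Q')\}=\min_{0\le i\le s}\{\nu_Q(g_i)+i\,\nu(Q')\};
\]
the right-hand side is precisely MacLane's augmentation formula over the base valuation $\nu_Q$ with key polynomial $Q'$ and value $\gamma:=\nu(Q')>\nu_Q(Q')$. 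Now $\nu_{Q'}$ is a valuation, so this formula does define a valuation on $K[x]$, and by the converse of Theorem \ref{Thmaugmentedval} --- a standard part of MacLane's theory: for a monic $\phi$ and $\gamma>w(\phi)$ the augmentation formula over a valuation $w$ yields a valuation only when $\phi$ is a MacLane--Vaqui\'e key polynomial for $w$ --- this forces $Q'$ to be a MacLane--Vaqui\'e key polynomial for $\nu_Q$, which is the assertion.

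If one prefers to be self-contained rather than invoking that converse, the argument splits as follows: (i) $\nu_Q$-minimality of $Q'$, using that $Q$ is a MacLane--Vaqui\'e key polynomial for $\nu_Q$ (Theorem 23 of \cite{wsj}), so that $\mathrm{gr}_{\nu_Q}(K[x])$ is a polynomial ring over the subalgebra generated by classes of polynomials of degree $<d$, with $\mathrm{in}_{\nu_Q}(Q)$ transcendental over it; expanding $Q'$ and any $f$ with $Q'\mid_{\nu_Q}f$ in powers of $Q$ and analysing the resulting identity in this polynomial ring then yields $\deg(f)\ge d'$; and (ii) $\nu_Q$-irreducibility, which amounts to showing that $\mathrm{in}_{\nu_Q}(Q')$ is a prime element of $\mathrm{gr}_{\nu_Q}(K[x])$, and here one again falls back on the identity above, since $\mathrm{in}_{\nu_Q}(Q')$ is prime exactly when the associated augmentation formula over $\nu_Q$ defines a valuation, which it does.

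The step I expect to be the main obstacle is the $\nu_Q$-irreducibility: ruling out $Q'\mid_{\nu_Q}(fg)$ with $Q'\nmid_{\nu_Q}f$ and $Q'\nmid_{\nu_Q}g$ for $f,g$ of arbitrary degree requires genuine control of $\mathrm{gr}_{\nu_Q}(K[x])$ and of the precise position of $\mathrm{in}_{\nu_Q}(Q')$ in it, and it cannot be settled by the degree bookkeeping that (with some care) handles minimality. The value of routing through the augmented-valuation identity is exactly that it transfers this irreducibility to the already-available fact that $\nu_{Q'}$ is a valuation: the hypothesis $Q'\in\Psi(Q)$ is precisely what makes $\nu_{Q'}$ equal to the augmentation $[\nu_Q;\nu_{Q'}(Q')=\nu(Q')]$, so the "new" key polynomial $Q'$ inherits from that augmentation both minimality and irreducibility over the base valuation $\nu_Q$.
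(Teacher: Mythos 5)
The paper itself gives no proof of this statement (it is quoted from Theorem 26 of \cite{wsj}), so your argument can only be judged on its own terms. Your reduction is correct and genuinely illuminating: since $\deg(Q')=\alpha(Q)$, every coefficient of a $Q'$-expansion has degree $<\alpha(Q)$, where $\nu_Q$ and $\nu$ agree, so indeed $\nu_{Q'}=[\nu_Q;\,\nu_{Q'}(Q')=\nu(Q')]$ with $\nu(Q')>\nu_Q(Q')$, and $\nu_{Q'}$ is known to be a valuation because $Q'$ is itself a Spivakovsky key polynomial (Theorem \ref{definofkeypol} with $Q_-=Q$). Up to this point everything checks.

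The gap is the step you lean on to finish: the claim that the augmentation formula over a valuation $w$ with a monic $\phi$ and $\gamma>w(\phi)$ defines a valuation \emph{only if} $\phi$ is a MacLane--Vaqui\'e key polynomial for $w$. This converse of Theorem \ref{Thmaugmentedval} is not among the results quoted in the paper (Theorem \ref{Thmaugmentedval} is only the forward implication), and after your reduction it carries the entire content of the theorem: both $\nu_Q$-minimality and $\nu_Q$-irreducibility of $Q'$ are exactly what it would deliver. The converse is in fact true, but it is not a triviality --- for instance, one cannot even conclude $w'(f)\ge w(f)$ pointwise without already knowing that $\phi$ is $w$-minimal, which blocks the most natural attempts to extract minimality and irreducibility from the multiplicativity of $w'$. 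Your ``self-contained'' fallback does not repair this: for irreducibility you assert that $\mathrm{in}_{\nu_Q}(Q')$ is prime ``exactly when the associated augmentation formula defines a valuation'', which is the same unproved converse restated, and the minimality argument via the polynomial-ring structure of $\mathrm{gr}_{\nu_Q}(K[x])$ is only sketched. To close the proof you must either locate and cite a precise reference for the converse, or prove directly (as \cite{wsj} does) that $Q'$ is $\nu_Q$-minimal and that $\mathrm{in}_{\nu_Q}(Q')$ divides a product of initial forms only if it divides one of the factors; the degree bookkeeping handles the former, but the latter requires genuine control of the graded algebra, exactly as you yourself anticipate in your last paragraph.
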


As for the converse, we have:

\begin{Teo}[Corollary 29 of \cite{wsj}] Let $Q$ be a MacLane-Vaqui\'e key polynomial for $\nu$ and $\nu'$ a valuation of $K[x]$ for which $\nu'(Q)>\nu(Q)$ and $\nu'(f)=\nu(f)$ for every $f\in K[x]$ with $\deg(f)<\deg(Q)$. The $Q$ is a Spivakovsky's key polynomial for $\nu'$.
\end{Teo}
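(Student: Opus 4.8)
The plan is to verify directly the implication that defines a Spivakovsky key polynomial for $\nu'$: that $\epsilon_{\nu'}(f)\geq\epsilon_{\nu'}(Q)$ forces $\deg(f)\geq\deg(Q)$ for every $f\in K[x]$. Since that conclusion is automatic as soon as $\deg(f)\geq\deg(Q)$, and since $\epsilon(cf)=\epsilon(f)$ for $c\in K^\times$ because $\nu'(cf)-\nu'(\partial_b(cf))=\nu'(f)-\nu'(\partial_bf)$, it is enough to fix a monic $f$ with $\deg(f)<\deg(Q)$ and prove $\epsilon_{\nu'}(f)<\epsilon_{\nu'}(Q)$.

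First I would reduce the computation of both $\epsilon$'s to $\nu$. If $\deg(g)\leq\deg(Q)$ then $\deg(\partial_bg)<\deg(Q)$ for all $b\geq 1$, so $\nu'(\partial_bg)=\nu(\partial_bg)$ by hypothesis; and if in addition $\deg(g)<\deg(Q)$ then also $\nu'(g)=\nu(g)$. Hence $\epsilon_{\nu'}(f)=\epsilon_\nu(f)$, whereas, using $\nu'(Q)>\nu(Q)$,
\[
\epsilon_{\nu'}(Q)=\max_{1\leq b\leq\deg(Q)}\frac{\nu'(Q)-\nu(\partial_bQ)}{b}>\max_{1\leq b\leq\deg(Q)}\frac{\nu(Q)-\nu(\partial_bQ)}{b}=\epsilon_\nu(Q).
\]
So it suffices to prove the purely $\nu$-theoretic inequality $\epsilon_\nu(f)\leq\epsilon_\nu(Q)$ for every monic $f$ with $\deg(f)<\deg(Q)$; the strictness we actually need is then supplied by the display above.

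By Proposition \ref{Profmagica} this is the same as $\delta_\nu(f)\leq\delta_\nu(Q)$: no monic polynomial of degree $<\deg(Q)$ has a root lying strictly closer to $x$ (with respect to $\mu$) than every root of $Q$. This is the heart of the matter and the step I expect to be the main obstacle. The approach I would take is a minimal-degree argument: suppose not, and let $f$ be a counterexample of least degree $d<\deg(Q)$; then $f$ is irreducible, since $\delta_\nu$ of a product is the maximum of the $\delta_\nu$'s, say $f$ is the minimal polynomial of a root $b$ with $\mu(x-b)=\delta_\nu(f)>\delta_\nu(Q)$. Minimality of $d$ gives $\delta_\nu(h)\leq\delta_\nu(Q)<\mu(x-b)$ for every $h$ with $\deg(h)<d$, and together with $\mu(x-a)\leq\delta_\nu(Q)$ for the roots $a$ of $Q$ this yields $\mu(b-c)=\mu(x-c)$ whenever $c$ is a root of $Q$ or of such an $h$. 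Dividing $Q=fh+r$ with $\deg(r)<d$, one then gets $\nu(r)=\mu(r(b))=\mu(Q(b))=\nu(Q)$, hence $\nu(fh)\geq\nu(Q)$; if this is strict, then $\textrm{in}_\nu(Q)=\textrm{in}_\nu(r)$ with $\deg(r)<\deg(Q)$, so $Q\mid_\nu r$, contradicting the $\nu$-minimality of $Q$.

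The one case this does not settle is $\nu(fh)=\nu(Q)$, and handling it is where the full strength of ``$Q$ is a MacLane--Vaqui\'e key polynomial'' has to be used; here I would combine the $\nu$-irreducibility of $Q$ with a closer look at the $Q$-expansion of $fh$ and at $\delta_\nu(h)$, and I expect this to be the most delicate point. An alternative that sidesteps the case analysis is to appeal to Theorem \ref{definofkeypol}: after reducing $\nu'$ to the augmented valuation $[\nu;\nu'(Q)=\gamma]$ (which agrees with $\nu'$ on polynomials of degree $<\deg(Q)$ and takes the same value at $Q$, so $\epsilon$ of $f$ and of $Q$ are unaffected), take a complete, well-ordered set $\textbf{Q}$ of key polynomials for $\nu$ from Theorem \ref{Theoremexistencecompleteseqkpol}; the elements of $\textbf{Q}$ of degree $<\deg(Q)$ form an initial segment, and $Q_-$ is either its maximum, in which case one checks $Q\in\Psi(Q_-)$ for $\nu'$ using $\nu'_{Q_-}(Q)\leq\nu(Q)<\gamma=\nu'(Q)$ and completeness of $\textbf{Q}$ to get $\alpha(Q_-)=\deg(Q)$, or this initial segment is cofinal in a family $\Psi(Q_-)$ of key polynomials of equal degree, in which case \textbf{(K1)}--\textbf{(K4)} are verified for $\nu'$ from the same completeness and value-accumulation properties. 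Either way $Q$ is a Spivakovsky key polynomial for $\nu'$.
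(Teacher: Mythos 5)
The paper does not actually prove this statement; it is quoted verbatim from Corollary 29 of \cite{wsj}, so there is no in-text argument to compare yours with, and I have to judge the proposal on its own. Your reduction is correct and efficient: for $\deg(f)<\deg(Q)$ the quantities $\nu'(f)$, $\nu'(\partial_bf)$ and $\nu'(\partial_bQ)$ all coincide with their $\nu$-values, while $\nu'(Q)>\nu(Q)$ forces $\epsilon_{\nu'}(Q)>\epsilon_\nu(Q)$, so everything does come down to the single inequality $\epsilon_\nu(f)\leq\epsilon_\nu(Q)$, i.e.\ (by Proposition \ref{Profmagica}) $\delta_\nu(f)\leq\delta_\nu(Q)$, for every monic $f$ with $\deg(f)<\deg(Q)$. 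That inequality is precisely the technical heart of the comparison theorems of \cite{wsj}, and it is the one thing you do not prove.

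Your minimal-degree argument genuinely stalls where you say it does. In the case $\nu(fh)=\nu(Q)$ you are left with $\textrm{in}_\nu(Q)=\textrm{in}_\nu(f)\,\textrm{in}_\nu(h)+\textrm{in}_\nu(r)$, all three terms having the same value, and nothing established so far produces a relation of the form $Q\mid_\nu g$ with $\deg(g)<\deg(Q)$ or a violation of $\nu$-irreducibility: the quotient $h$ can have degree $\geq d$, so its roots are not controlled by the minimality of $d$ and you cannot compare $\nu(h)$ with $\mu(h(b))$; and the $f$-expansion of $Q$ only yields statements about $\nu$-divisibility by $f$, not by $Q$. Note moreover that your Case A uses only the $\nu$-minimality of $Q$; since $\nu$-minimality alone cannot be expected to suffice, the missing case is exactly where $\nu$-irreducibility must do real work, and announcing that you ``would combine the $\nu$-irreducibility of $Q$ with a closer look at the $Q$-expansion'' names the difficulty without resolving it. The fallback via Theorem \ref{definofkeypol} has the same character: the claims that the degree-$<\deg(Q)$ part of a complete set forms an initial segment with the right supremum, and that \textbf{(K1)}--\textbf{(K4)} ``are verified from completeness,'' are each comparable in difficulty to the statement being proved. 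So the skeleton is right and consistent with the strategy of \cite{wsj}, but the proof is incomplete at its central step.
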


\section{Pseudo-convergent sequences}\label{pseudoconsequ}

Let $\{a_\rho\}_{\rho<\lambda}$ be a pseudo-convergent sequence for $(K,\nu)$. For every polynomial $f(x)\in K[x]$, there exists $\rho_f<\lambda$ such that either
\begin{equation}\label{condforpscstotra}
\nu(f(a_\sigma))=\nu(f(a_{\rho_f}))\mbox{ for every }\rho_f\leq \sigma<\lambda,
\end{equation}
or
\begin{equation}\label{condforpscstoalg}
\nu(f(a_\sigma))>\nu(f(a_{\rho}))\mbox{ for every }\rho_f\leq \rho< \sigma<\lambda.
\end{equation}
\begin{Def}
A pseudo-convergent sequence $\{a_\rho\}_{\rho<\lambda}$ is said to be of \textbf{transcendental type} if for every polynomial $f(x)\in K[x]$ the condition (\ref{condforpscstotra}) holds.
Otherwise, $\{a_\rho\}_{\rho<\lambda}$ is said to be of \textbf{algebraic type}.
\end{Def}

The next two theorems justify the definitions of algebraic and transcendental pseudo-convergent sequences.
\begin{Teo}[Theorem 2 of \cite{Kap}]
If $\{a_\rho\}_{\rho<\lambda}$ is a pseudo-convergent sequence of transcendental type, without a limit in $K$, then there exists an immediate transcendental extension $K(z)$ of $K$ defined by setting $\nu(f(z))$ to be the value $\nu(f(a_{\rho_f}))$ as in condition (\ref{condforpscstotra}). Moreover, for every valuation $\mu$ in some extension $K(u)$ of $K$, if $u$ is a pseudo-limit of $\{a_\rho\}_{\rho<\lambda}$, then there exists a value preserving $K$-isomorphism from $K(u)$ to $K(z)$ taking $u$ to $z$.
\end{Teo}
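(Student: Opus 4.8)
The plan is to build the desired extension directly on the ring $K[x]$ and then pass to its fraction field. First I would define a map $w$ on $K[x]$ by $w(f):=\nu(f(a_{\rho_f}))$, where $\rho_f<\lambda$ is an index as in (\ref{condforpscstotra}); such an index is available precisely because $\{a_\rho\}_{\rho<\lambda}$ is of transcendental type. The first thing to verify is that $w$ is a well-defined valuation. Well-definedness and the axioms \textbf{(V1)}--\textbf{(V2)} are obtained by evaluating at a single index: given finitely many polynomials (including the relevant products and sums), since $\lambda$ is a limit ordinal one picks one $\rho^\ast<\lambda$ exceeding all the corresponding $\rho_f$, and the required identities are then inherited from $\nu$ on $K$. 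For the support one notes that the $a_\rho$ are pairwise distinct (otherwise $\nu(a_\sigma-a_\rho)=\infty$ for some $\rho<\sigma$, against the defining inequality) and that there are infinitely many indices, so a nonzero $f$ satisfies $f(a_\rho)\neq 0$ for all large $\rho$ and hence $w(f)<\infty$; thus $\SU(w)=(0)$ and $w$ extends uniquely to $K(x)=\QF(K[x])$. Writing $z$ for the image of the indeterminate, this makes $K(z)$ a field extension of $K$ with $w|_K=\nu$ and $w(f(z))=\nu(f(a_{\rho_f}))$ for all $f\in K[x]$; moreover $z$ is transcendental over $K$, since $p(z)=0$ for $0\neq p\in K[x]$ would give $w(p(z))=\infty=\nu(p(a_{\rho_p}))$, impossible once $\rho_p$ is taken past the finitely many roots of $p$.

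Next I would check that $K(z)\mid K$ is immediate. Put $\gamma_\rho:=\nu(a_{\rho+1}-a_\rho)$, and recall that $\{\gamma_\rho\}$ is strictly increasing and that $\nu(a_\sigma-a_\rho)=\gamma_\rho$ for every $\sigma>\rho$. The value group is immediate since $w\bigl(f(z)/g(z)\bigr)=\nu(f(a_\rho))-\nu(g(a_\rho))\in\nu K$ for $\rho$ large. For the residue field it is enough to produce, for each $\theta\in K(z)$ with $w(\theta)=0$, an element $c\in K$ with $w(\theta-c)>0$. Writing $\theta=f(z)/g(z)$ one has $w(f(z))=w(g(z))=:\beta$, hence $\nu(f(a_\rho))=\nu(g(a_\rho))=\beta$ for $\rho$ large; I would take $c:=f(a_\rho)/g(a_\rho)$ and $h(x):=f(x)g(a_\rho)-f(a_\rho)g(x)\in K[x]$, so that $h(a_\rho)=0$ and $w(\theta-c)=w(h(z))-2\beta$. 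Expanding $h$ about $a_\rho$ (in the notation $\partial_i$ of the Introduction, so $h=\sum_i\partial_i h(a_\rho)(x-a_\rho)^i$) one has $\partial_i h(a_\rho)=g(a_\rho)\partial_i f(a_\rho)-f(a_\rho)\partial_i g(a_\rho)$ for $i\geq 1$; the crucial input, the Lemma stated below, yields $\nu(\partial_i f(a_\rho))+i\gamma_\rho>\beta$ and $\nu(\partial_i g(a_\rho))+i\gamma_\rho>\beta$ for all $i\geq1$ and $\rho$ large, whence $\nu(\partial_i h(a_\rho))+i\gamma_\rho>2\beta$. Since $\nu\bigl((a_\sigma-a_\rho)^i\bigr)=i\gamma_\rho$ for $\sigma>\rho$, evaluating $h$ at a large $a_\sigma$ gives $w(h(z))=\nu(h(a_\sigma))\geq\min_{i\geq1}\{\nu(\partial_i h(a_\rho))+i\gamma_\rho\}>2\beta$, so $w(\theta-c)>0$, as wanted.

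The Lemma I need is the following refinement of the dichotomy (\ref{condforpscstotra})--(\ref{condforpscstoalg}): if $\{a_\rho\}_{\rho<\lambda}$ is of transcendental type, then for every $f\in K[x]$, for all sufficiently large $\rho$ and all $i\geq1$ one has $\nu(\partial_i f(a_\rho))+i\gamma_\rho>\nu(f(a_\rho))$. I would prove it as follows. Applying transcendental type to $f$ and to each $\partial_i f$ yields an index past which $\nu(f(a_\rho))=:v_0$ and all $\nu(\partial_i f(a_\rho))=:v_i$ are constant (in $\nu K\cup\{\infty\}$), and each $v_i+i\gamma_\rho$ with $i\geq1$ is then strictly increasing in $\rho$. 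Let $j(\rho)$ be the least index achieving $\min_j\{v_j+j\gamma_\rho\}$; a short computation from $\gamma_\rho<\gamma_{\rho'}$ shows $j(\cdot)$ is eventually constant, say equal to $j^\ast$. If $j^\ast\geq1$, then for $\rho$ large $j^\ast$ is the unique minimizer (a persistent tie $v_{j^\ast}+j^\ast\gamma_\rho=v_{j'}+j'\gamma_\rho$ would force $\gamma_\rho$ constant), so the expansion of $f$ about $a_\rho$ gives $\nu(f(a_\sigma))=v_{j^\ast}+j^\ast\gamma_\rho$ for $\sigma>\rho$ large, while the left-hand side equals $v_0$; hence $v_0=v_{j^\ast}+j^\ast\gamma_\rho$ for all large $\rho$, contradicting strict monotonicity. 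Therefore $j^\ast=0$, i.e.\ $v_0\leq v_i+i\gamma_\rho$ for every $i\geq1$ and $\rho$ large; strictness for each of the finitely many relevant $i$ follows once more from strict monotonicity of $v_i+i\gamma_\rho$. This stabilization argument is the technical heart of the matter and the step I expect to be the main obstacle.

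Finally, for the uniqueness statement, let $\mu$ be a valuation on an extension $K(u)\mid K$ with $\mu|_K=\nu$ and with $u$ a pseudo-limit of $\{a_\rho\}_{\rho<\lambda}$, so $\mu(u-a_\rho)=\gamma_\rho$ for all $\rho$. I would first deduce from the Lemma that $\mu(f(u))=\nu(f(a_{\rho_f}))$ for every $f\in K[x]$: expanding $f(u)=\sum_i\partial_i f(a_\rho)(u-a_\rho)^i$ for $\rho$ large, the $i$-th summand has value $\nu(\partial_i f(a_\rho))+i\gamma_\rho$, which for $i\geq1$ strictly exceeds the $i=0$ summand $\nu(f(a_\rho))$, so $\mu(f(u))=\nu(f(a_\rho))=\nu(f(a_{\rho_f}))$. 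In particular $u$ is transcendental over $K$ (a minimal polynomial would have finite value at $u$), so $z\mapsto u$ is a $K$-algebra isomorphism $K[z]\to K[u]$; since $\mu(f(u))=\nu(f(a_{\rho_f}))=w(f(z))$ it is value preserving, hence extends to a value-preserving $K$-isomorphism $K(u)\to K(z)$ carrying $u$ to $z$. (One could, alternatively, derive immediacy of $K(z)\mid K$ from this last point: in a maximal immediate extension $M\mid K$ the sequence $\{a_\rho\}$ has a pseudo-limit $a$, necessarily not in $K$ by hypothesis and transcendental over $K$ by the same argument, so $(K(z),w)$ is $K$-isomorphic as a valued field to $K(a)$ with the restricted valuation, which, being a subextension of the immediate extension $M\mid K$, is immediate.)
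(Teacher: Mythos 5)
The paper states this result without proof --- it is quoted directly as Theorem 2 of Kaplansky's \emph{Maximal fields with valuations I} --- so there is no in-paper argument to compare against; judged on its own, your proposal is correct and in fact reconstructs Kaplansky's original proof, with the stabilization lemma on the values $v_i+i\gamma_\rho$ (eventually a unique minimizing index, which must be $i=0$ in the transcendental case) playing exactly the role of Kaplansky's key lemma. The only caveat is your parenthetical alternative for immediacy: the fact that a maximal immediate extension contains a pseudo-limit of every pseudo-convergent sequence is itself proved \emph{from} this theorem in Kaplansky's development, so that shortcut is circular; your main line of argument, which does not use it, is fine.
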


\begin{Teo}[Theorem 3 of \cite{Kap}]\label{thmonalgimmext}
Let $\{a_\rho\}_{\rho<\lambda}$ be a pseudo-convergent sequence of algebraic type, without a limit in $K$, $q(x)$ a polynomial of smallest degree for which (\ref{condforpscstoalg}) holds and $z$ a root of $q(x)$. Then there exists an immediate algebraic extension of $K$ to $K(z)$ defined as follows: for every polynomial $f(x)\in K[x]$, with $\deg f<\deg q$ we set $\nu(f(z))$ to be the value $\nu(f(a_{\rho_f}))$ as in condition (\ref{condforpscstotra}). Moreover, if $u$ is a root of $q(x)$ and $\mu$ is some extension $K(u)$ of $K$ making $u$ a pseudo-limit of $\{a_\rho\}_{\rho<\lambda}$, then there exists a value preserving $K$-isomorphism from $K(u)$ to $K(z)$ taking $u$ to $z$.
\end{Teo}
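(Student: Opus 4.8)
The plan is to construct the valuation on $K(z)$ by transporting the eventual values along the sequence, and then to verify immediacy directly. First I would show that $q$ is irreducible over $K$: if $q=q_1q_2$ with $1\le\deg q_i<\deg q$, then by the minimality of $\deg q$ neither $q_1$ nor $q_2$ can satisfy (\ref{condforpscstoalg}), so both satisfy (\ref{condforpscstotra}); hence $\nu(q(a_\sigma))=\nu(q_1(a_\sigma))+\nu(q_2(a_\sigma))$ is eventually constant, contradicting that $q$ satisfies (\ref{condforpscstoalg}). Thus $K(z)=K[x]/(q)$ is a field and every element of it is uniquely of the form $g(z)$ with $\deg g<\deg q$; since $\deg g<\deg q$ forces $g$ to satisfy (\ref{condforpscstotra}), the assignment $w(g(z)):=\nu(g(a_{\rho_g}))$ (the eventual value) is well defined, and $w$ restricts to $\nu$ on $K$.

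Next I would check that $w$ is a valuation. Evaluating at $a_\rho$ gives the triangle inequality immediately, so the only real work is multiplicativity. Given $g,h$ of degree $<\deg q$, divide $gh=qs+r$ with $\deg r<\deg q$; then $\deg s\le\deg q-2$, so $s$ also satisfies (\ref{condforpscstotra}), and $r\ne 0$ because $q$ is irreducible of degree larger than $\deg g$ and $\deg h$. From $r(a_\rho)=g(a_\rho)h(a_\rho)-q(a_\rho)s(a_\rho)$ note that $\nu(g(a_\rho)h(a_\rho))$ and $\nu(s(a_\rho))$ are eventually constant while $\nu(q(a_\rho))$ is eventually strictly increasing. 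The crux — and I expect this to be the main obstacle — is to show that $\nu(q(a_\rho)s(a_\rho))$ eventually exceeds $\nu(g(a_\rho)h(a_\rho))$ (the case $s=0$ being trivial, since then $r=gh$): otherwise, being eventually strictly increasing, $\nu(q(a_\rho)s(a_\rho))$ would remain eventually strictly below the eventually constant value $\nu(g(a_\rho)h(a_\rho))$, whence $\nu(r(a_\rho))$ would be eventually strictly increasing, i.e. $r$ would satisfy (\ref{condforpscstoalg}) with $\deg r<\deg q$, which is impossible. Therefore $w(r(z))$ is the eventual value of $\nu(g(a_\rho)h(a_\rho))$, which is $w(g(z))+w(h(z))$.

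It remains to show that $K(z)\mid K$ is immediate. The value group is unchanged because $w(g(z))\in\nu K$ for every $g$. For the residue field I would first observe that $z$ is a pseudo-limit of $\{a_\rho\}_{\rho<\lambda}$ in $(K(z),w)$, since $w(z-a_\rho)$ equals the eventual value of $\nu(a_\tau-a_\rho)=\nu(a_{\rho+1}-a_\rho)$ — in particular $\deg q\ge 2$, as otherwise $q=x-a$ would exhibit $a\in K$ as a limit of the sequence. Then, expanding $g(z)=\sum_i\partial_i g(a_\rho)(z-a_\rho)^i$ and using that every $\partial_i g$ has degree $<\deg q$ (so $\nu(\partial_i g(a_\rho))$ is eventually constant) while the exponents of $w(z-a_\rho)$ grow, one shows by induction on $\deg g$ that for $\rho$ large the degree-$0$ term of the expansion strictly dominates the others, so $w(g(z)-g(a_\rho))>w(g(z))$; equivalently $\mathrm{in}_w(g(z))\in\mathrm{gr}_\nu(K)\subseteq\mathrm{gr}_w(K(z))$. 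Consequently, when $w(g(z))=0$ the residue of $g(z)$ coincides with that of $g(a_\rho)\in K$, so $K(z)\nu=K\nu$. The bookkeeping in this last estimate — checking that the index achieving the minimum in the expansion stabilizes, which is again where the minimality of $\deg q$ is felt — is the delicate point after multiplicativity.

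For the final clause, let $u$ be another root of $q$ and $\mu$ an extension to $K(u)$ for which $u$ is a pseudo-limit of $\{a_\rho\}_{\rho<\lambda}$. Running the same Taylor-expansion argument in $(K(u),\mu)$, with $\mu(u-a_\rho)=\nu(a_{\rho+1}-a_\rho)$, forces $\mu(g(u))=\nu(g(a_{\rho_g}))=w(g(z))$ for every $g$ with $\deg g<\deg q$. Since $K(u)=\{g(u):\deg g<\deg q\}$ and the $K$-isomorphism $K(u)\to K(z)$ determined by $u\mapsto z$ carries $g(u)$ to $g(z)$, it is value preserving, which completes the plan.
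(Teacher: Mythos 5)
The paper states this result only as a citation of Kaplansky's Theorem 3 and gives no proof of its own, so there is no internal argument to compare against; what you propose is, in essence, Kaplansky's original proof. The parts you carry out in full are correct: the irreducibility of $q$ via the minimality of its degree, the well-definedness of the eventual-value assignment on representatives of degree $<\deg q$, and especially the multiplicativity argument through the division $gh=qs+r$, where the dichotomy (a strictly increasing net of values either eventually exceeds the eventually constant value of $\nu(g(a_\rho)h(a_\rho))$ or stays below it, and in the latter case $r$ would be a nonzero polynomial of degree $<\deg q$ satisfying (\ref{condforpscstoalg})) is exactly the right use of the minimality hypothesis.

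One step is under-justified as written. In the residue-field computation you argue that the degree-zero term of the Taylor expansion dominates because ``the exponents of $w(z-a_\rho)$ grow.'' The values $\gamma_\rho=w(z-a_\rho)$ are strictly increasing but need not be cofinal in the value group, so the growth of $i\gamma_\rho$ by itself does not force $\nu(\partial_i g(a_\rho))+i\gamma_\rho>\nu(g(a_\rho))$ for $i\geq 1$. The correct justification --- which you do gesture at when you mention that the minimizing index must stabilize --- is Kaplansky's lemma that for all sufficiently large $\rho$ a unique index $h\geq 1$ realizes $\min_{i\geq1}\{\beta_i+i\gamma_\rho\}$, whence $\nu(g(a_\tau)-g(a_\rho))=\beta_h+h\gamma_\rho$ for $\tau>\rho$; if this quantity did not eventually exceed $\beta_0$, then $\nu(g(a_\tau))$ would equal the strictly increasing value $\beta_h+h\gamma_\rho$ and so could not be eventually constant, contradicting $\deg g<\deg q$. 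With that lemma supplied, the immediacy claim and the final uniqueness clause (which reruns the same estimate in $(K(u),\mu)$ and only uses data computed in $K$) go through as you describe.
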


\section{Comparison results}\label{compasraccai}
In this section we describe explicitly the relation between key polynomials, pseudo-convegent sequences and minimal pairs.

\begin{Teo}[Theorem 1.2 of \cite{f}]\label{compthemkppsc}
Let $\{a_\rho\}_{\rho<\lambda}\subset K$ be a pseudo-convergent sequence, without a limit in $K$, for which $x$ is a limit. If $\{a_\rho\}_{\rho<\lambda}$ is of transcendental type, then
\[
\textbf{Q}:=\{x-a_\rho\mid \rho<\lambda\}
\]
is a complete set of key polynomials for $\nu$. On the other hand, if $\{a_\rho\}_{\rho<\lambda}$ is of algebraic type, then every polynomial $q(x)$ of minimal degree among the polynomials not fixed by $\{a_\rho\}_{\rho<\lambda}$ is a limit key polynomial for $\nu$.
\end{Teo}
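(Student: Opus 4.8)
The plan is to run both halves off a single mechanism. Write $\gamma_\rho:=\nu(x-a_\rho)$. Since $x$ is a limit of $\{a_\rho\}_{\rho<\lambda}$ we have $\gamma_\rho=\nu(a_{\rho+1}-a_\rho)$, and pseudo-convergence gives $\nu(a_\sigma-a_\rho)=\gamma_\rho$ for $\rho<\sigma<\lambda$ and $\gamma_\rho<\gamma_\sigma$ for $\rho<\sigma$; in particular the $a_\rho$ are pairwise distinct, and since no $b\in K$ is a limit one checks $\nu(x-b)<\gamma_\sigma$ for all $b\in K$ and all large $\sigma$. For monic $f\in K[x]$ the expansion of $f$ in powers of $x-a_\rho$ is $f=\sum_i\partial_if(a_\rho)\,(x-a_\rho)^i$, so $\nu_{x-a_\rho}(f)=\min_i\{\nu(\partial_if(a_\rho))+i\gamma_\rho\}\le\nu(f)$. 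The engine is the following dichotomy: \emph{$f$ is fixed by $\{a_\rho\}_{\rho<\lambda}$ if and only if $\nu_{x-a_\rho}(f)=\nu(f)$ for all large $\rho$, while $f$ is not fixed if and only if $\nu_{x-a_\rho}(f)<\nu(f)$ for every $\rho$.}

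To prove the dichotomy I would factor $f=c\prod_j(x-c_j)$ over $\overline K$; by multiplicativity of $\mu$, $\nu(f(a_\rho))=\nu(c)+\sum_j\mu(a_\rho-c_j)$ and $\nu(f(x))=\nu(c)+\sum_j\mu(x-c_j)$. A root $c_j$ that is a pseudo-limit of the sequence contributes $\mu(a_\rho-c_j)=\gamma_\rho$ and $\mu(x-c_j)>\gamma_\rho$ for every $\rho$, while any other root contributes a fixed value $s_j$, to which $\mu(a_\rho-c_j)$ stabilises and which equals $\mu(x-c_j)$, with $s_j<\gamma_\sigma$ for large $\sigma$. Hence $\nu(f(a_\rho))$ is eventually constant precisely when $f$ has no pseudo-limit root, i.e. precisely when $f$ is fixed, and then $\nu(f(a_\rho))=\nu(f(x))=\nu(f)$ for large $\rho$; using $\delta(f)=\epsilon(f)$ from Proposition~\ref{Profmagica} one moreover checks that for large $\rho$ every term with $i\ge 1$ in the expansion has value $>\nu(f)$ (here the $\partial_if$ are fixed as well), so $\nu_{x-a_\rho}(f)=\nu(f)$. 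If $f$ is not fixed, some pseudo-limit root gives $\nu(f(x))>\nu(f(a_\rho))\ge\nu_{x-a_\rho}(f)$ for every $\rho$. The point of invoking $\delta=\epsilon$ here is precisely to avoid pathologies when the set $\{\gamma_\rho\}$ happens to be bounded.

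In the transcendental case every $f\in K[x]$ is fixed, equivalently no element of $\overline K$ is a pseudo-limit of the sequence. Each $x-a_\rho$ is monic of degree one, hence automatically a key polynomial: a nonzero constant $c$ has $\epsilon(c)=-\infty<\epsilon(x-a_\rho)$, so only polynomials of degree $\ge 1$ can have $\epsilon\ge\epsilon(x-a_\rho)$. Since $\epsilon(x-a_\rho)=\delta(x-a_\rho)=\gamma_\rho$ is strictly increasing, the $\epsilon$-order on $\textbf{Q}=\{x-a_\rho\mid\rho<\lambda\}$ is the index order, so $\textbf{Q}$ is well-ordered. Finally, for nonconstant $p$ (we may take $p$ monic) the dichotomy produces $\rho$ with $\nu_{x-a_\rho}(p)=\nu(p)$ and $\deg(x-a_\rho)=1\le\deg p$; hence $\textbf{Q}$ is a complete set of key polynomials.

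In the algebraic case take $q$ monic of minimal degree $n$ among the polynomials not fixed by the sequence; $n\ge 2$ since degree-one polynomials are always fixed. I would verify \textbf{(K1)}--\textbf{(K4)} of Theorem~\ref{definofkeypol} with $Q_-:=x-a_{\rho_0}$, where $\rho_0$ is chosen large enough that $\gamma_{\rho_0}>\mu(x-c)$ for every root $c$ of $q$ that is not a pseudo-limit; since $Q_-$ is a key polynomial, Theorem~\ref{definofkeypol} and the definition of limit key polynomial then give that $q$ is a limit key polynomial for $\nu$. \textbf{(K1)}: no constant $f$ has $\nu_{Q_-}(f)<\nu(f)$, whereas $\nu_{Q_-}(x-a_{\rho_0+1})=\gamma_{\rho_0}<\gamma_{\rho_0+1}=\nu(x-a_{\rho_0+1})$, so $\alpha(Q_-)=1=\deg Q_-$. \textbf{(K2)}: $x-a_\sigma\in\Psi(Q_-)$ with $\nu(x-a_\sigma)=\gamma_\sigma$ for all $\sigma>\rho_0$, while each $Q'=x-b\in\Psi(Q_-)$ has $\nu(Q')<\gamma_\sigma$ for some $\sigma>\rho_0$, so $\{\nu(Q')\mid Q'\in\Psi(Q_-)\}$ has no largest element. \textbf{(K3)}: for $Q'=x-b\in\Psi(Q_-)$ one has $\nu(x-b)>\gamma_{\rho_0}$, so the choice of $\rho_0$ forces $\mu(b-c)=\mu(x-c)$ at every non-pseudo-limit root $c$ of $q$ and $\mu(b-c)<\mu(x-c)$ at the (at least one) pseudo-limit root, whence $\nu_{Q'}(q)\le\nu(q(b))=\sum_j\mu(b-c_j)<\sum_j\mu(x-c_j)=\nu(q)$. \textbf{(K4)}: any monic $Q$ of degree $<n$ is fixed by minimality of $n$, so the dichotomy gives $\nu_{x-a_\sigma}(Q)=\nu(Q)$ for large $\sigma$, and since $x-a_\sigma\in\Psi(Q_-)$ for every $\sigma>\rho_0$ such $Q$ cannot satisfy \textbf{(K3)}; thus $q$ has minimal degree among polynomials satisfying \textbf{(K3)}. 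The genuinely delicate point in all of this is the dichotomy — the bookkeeping that separates pseudo-limit roots, whose valuation contribution increases along the sequence, from the bounded contributions of the remaining roots, together with the passage from values at the $a_\rho$ to values at $x$; everything else reduces to routine manipulation of $q$-expansions and the monotonicity of $\gamma_\rho$.
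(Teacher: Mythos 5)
The paper itself does not prove Theorem \ref{compthemkppsc}; it is imported verbatim from Theorem 1.2 of \cite{f}, so there is no in-paper argument to compare against. Your route --- factor each polynomial over $\overline K$, separate pseudo-limit roots (contribution $\gamma_\rho$, strictly increasing) from the remaining roots (contribution stabilising at $\mu(x-c)<\gamma_\sigma$ for large $\sigma$), deduce the fixed/unfixed dichotomy in terms of truncations, and then verify \textbf{(K1)}--\textbf{(K4)} against $Q_-=x-a_{\rho_0}$ --- is exactly the mechanism of \cite{f}, and the verifications of \textbf{(K1)}, \textbf{(K2)}, \textbf{(K3)} and \textbf{(K4)} as you write them are correct.

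The one step that is not right as stated is the forward direction of your dichotomy for a \emph{general} fixed $f$, specifically the parenthetical ``here the $\partial_i f$ are fixed as well.'' That is false in general: $f$ can be fixed while $\partial_1 f$ has a pseudo-limit root (e.g.\ $f=(x-\eta)^2-u^2$ with $\eta$ a pseudo-limit and $\mu(u)$ small, so that neither root $\eta\pm u$ is a pseudo-limit but $\partial_1 f=2(x-\eta)$ vanishes at one). So you cannot argue via $\nu(\partial_i f(a_\rho))=\nu(\partial_i f)$ in general. The clean repair is to use the $\delta=\epsilon$ identity \emph{centered at $a_\rho$} rather than at $x$: the same root computation behind Proposition \ref{Profmagica} gives
\[
\max_{b\geq 1}\frac{\nu(f(a_\rho))-\nu(\partial_b f(a_\rho))}{b}=\max_j \mu(a_\rho-c_j),
\]
and when $f$ is fixed the right-hand side is $<\gamma_\rho$ for large $\rho$, which is precisely the statement that every term with $i\geq 1$ in the $(x-a_\rho)$-expansion has value $>\nu(f(a_\rho))=\nu(f)$, with no hypothesis on the derivatives. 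As it happens, your two applications of the dichotomy escape the problem anyway: in the transcendental case every polynomial (hence every derivative) is fixed, and in \textbf{(K4)} you only apply it to $Q$ of degree below the minimal unfixed degree $n$, so all $\partial_i Q$ have degree $<n$ and are fixed. So the proof is salvageable essentially as written, but the dichotomy should either be proved via the centered identity or be stated only in the generality in which you use it. A last cosmetic point: the completeness condition $\deg(q)\leq\deg(p)$ cannot hold for constant $p$ against degree-one $q$; you silently restrict to nonconstant $p$, which is the intended reading of the definition but is worth saying explicitly.
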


The theorem above gives us a way to interpret pseudo-convergent sequences as key polynomials. The next theorem gives us a way to obtain the opposite relation.

\begin{Prop}[Proposition 1.2 of \cite{g}]\label{propconvofnovaspiv}
Let $\textbf{Q}$ be a complete sequence of key polynomials for $\nu$, without last element. For each $Q\in\textbf{Q}$, let $a_Q\in \overline K$ be a root of $Q$ such that $\mu(x-a_Q)=\delta(Q)$. Then $\{a_Q\}_{Q\in \textbf{Q}}$ is a pseudo-convergent sequence of transcendental type, without a limit in $\overline K$, such that $x$ is a limit for it.
\end{Prop}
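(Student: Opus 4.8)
The plan is to verify the three requirements in turn: that $\{a_Q\}_{Q\in\textbf{Q}}$ is pseudo-convergent, that it has no limit in $\overline K$, and that $x$ is a limit for it. Throughout, I would use Proposition \ref{Profmagica} freely to replace $\delta(Q)=\max\{\nu(x-a)\mid a\text{ a root of }Q\}$ by $\epsilon(Q)$, since the indexing order on $\textbf{Q}$ is precisely $Q<Q'\Leftrightarrow\epsilon(Q)<\epsilon(Q')$. So the first order of business is to fix, for each $Q$, a root $a_Q$ realizing the maximum, i.e. $\mu(x-a_Q)=\delta(Q)=\epsilon(Q)$, and to record that $\epsilon$ is strictly increasing along $\textbf{Q}$.

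First I would show that $x$ is a limit: given $Q<Q'$ in $\textbf{Q}$, I claim $\mu(x-a_Q)=\epsilon(Q)<\epsilon(Q')=\mu(x-a_{Q'})$, which by the ultrametric inequality forces $\mu(a_{Q'}-a_Q)=\min\{\mu(x-a_Q),\mu(x-a_{Q'})\}=\epsilon(Q)=\mu(x-a_Q)$. Hence $\nu(a_{Q'}-a_Q)=\nu(x-a_Q)$ for all $Q<Q'$, which is exactly the condition that $x$ be a limit of the sequence (here $R=\overline K[x]$, $\nu$ replaced by $\mu$, using that $\textbf{Q}$ is well-ordered without last element so every $Q$ has a successor-type role, and more carefully: the "limit" condition as stated uses the successor $Q+1$, but since $\mu(a_{Q'}-a_Q)$ is the same value $\epsilon(Q)$ for every $Q'>Q$, it equals in particular $\mu(a_{Q^+}-a_Q)$ where $Q^+$ is the immediate successor). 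Pseudo-convergence then follows from the same computation: for $Q<Q'<Q''$ we get $\nu(a_{Q'}-a_Q)=\epsilon(Q)<\epsilon(Q')=\nu(a_{Q''}-a_{Q'})$, using strict monotonicity of $\epsilon$.

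Next, the transcendental-type assertion. A pseudo-convergent sequence for which $x$ is a limit is of algebraic type precisely when there is a polynomial $q\in K[x]$ that is "moved" by the sequence in the sense of \eqref{condforpscstoalg}; by Theorem \ref{compthemkppsc} (or rather its converse direction) such a $q$ of minimal degree would be a limit key polynomial, and more to the point, Theorem \ref{Thmsobretruncvalutrans} would give $\nu=\nu_q$ for some polynomial $q$, making $\nu$ valuation-transcendental. I would argue that if $\{a_Q\}$ were of algebraic type, then by Theorem \ref{thmonalgimmext} the value $\nu(f(a_Q))$ stabilizes for $f$ below the critical degree while a minimal-degree $q$ has values strictly increasing; one then shows $\textbf{Q}$ must have a largest element (a limit key polynomial built from $q$), contradicting the hypothesis that $\textbf{Q}$ has no last element. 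Concretely: completeness of $\textbf{Q}$ means every $p$ has some $Q\in\textbf{Q}$ with $\deg Q\le\deg p$ and $\nu_Q(p)=\nu(p)$; applying this to the hypothetical $q$ and chasing through the definition of $\alpha(Q)$ and $\Psi(Q)$ produces the last element.

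Finally, no limit in $\overline K$: suppose $b\in\overline K$ were a pseudo-limit, so $\mu(b-a_Q)=\mu(a_{Q^+}-a_Q)=\epsilon(Q)$ for all $Q$. Then $\epsilon(\text{min.\ poly.\ of }b)$ would be at least $\sup_Q\epsilon(Q)$, but completeness without last element means this supremum is not attained and $b$ has bounded degree $[K(b):K]$; comparing with the unbounded degrees $\deg Q$ forced by the key-polynomial structure (degrees must eventually exceed any fixed bound, else $\textbf{Q}$ stabilizes and acquires a last element) yields a contradiction. I expect the main obstacle to be this last step together with the transcendental-type step: both hinge on the precise sense in which "no last element" in $\textbf{Q}$ prevents the existence of a finite-degree object (a root $b$, or a polynomial $q$) that $\textbf{Q}$ would otherwise "converge to", and making that rigorous requires unwinding the definitions of $\alpha$, $\Psi$, and the truncation $\nu_Q$ rather than any single clean inequality. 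The pseudo-convergence and "x is a limit" parts, by contrast, are immediate from the ultrametric inequality once Proposition \ref{Profmagica} is invoked.
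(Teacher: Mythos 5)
Your first half is fine and is essentially the intended argument: choosing $a_Q$ with $\mu(x-a_Q)=\delta(Q)=\epsilon(Q)$ via Proposition \ref{Profmagica}, and using that $\epsilon$ is strictly increasing along the well-ordered set $\textbf{Q}$, the ultrametric identity $\mu(a_{Q'}-a_Q)=\min\{\mu(x-a_Q),\mu(x-a_{Q'})\}=\epsilon(Q)$ for $Q<Q'$ yields both pseudo-convergence and the fact that $x$ is a limit. The problems are in the remaining two parts. Your argument for ``no limit in $\overline K$'' rests on the claim that the degrees $\deg(Q)$, $Q\in\textbf{Q}$, must be unbounded because otherwise ``$\textbf{Q}$ stabilizes and acquires a last element.'' That is false: Theorem \ref{compthemkppsc} itself produces complete sets $\textbf{Q}=\{x-a_\rho\}_{\rho<\lambda}$ consisting entirely of degree-one key polynomials, indexed by a limit ordinal and hence without last element. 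The correct contradiction is with completeness, not with degrees: if $b\in\overline K$ were a limit, then $\mu(x-b)\geq\epsilon(Q)$ for every $Q$ (both $x$ and $b$ satisfy $\mu(\cdot\,-a_Q)=\epsilon(Q)$), hence $\mu(x-b)>\epsilon(Q)$ for every $Q$ because $\{\epsilon(Q)\}_{Q\in\textbf{Q}}$ has no maximum; taking $g$ to be the minimal polynomial of $b$ over $K$ gives $\epsilon(g)=\delta(g)\geq\mu(x-b)>\epsilon(Q)$ for all $Q$, and the basic comparison between $\epsilon$ and truncations from \cite{f} (if $\epsilon(g)>\epsilon(Q)$ then $\nu_Q(g)<\nu(g)$) then contradicts the existence of some $Q\in\textbf{Q}$ with $\nu_Q(g)=\nu(g)$.

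For transcendental type you reach for Theorem \ref{compthemkppsc} and Theorem \ref{Thmsobretruncvalutrans}, but this is a category mismatch: $\{a_Q\}$ is a pseudo-convergent sequence in $(\overline K,\mu)$, so its type is tested against polynomials $f\in\overline K[x]$, whereas those theorems concern sequences inside $K$ and key polynomials for $\nu$ on $K[x]$; nothing you cite rules out some $f\in\overline K[x]$ satisfying condition (\ref{condforpscstoalg}). The missing idea is to factor $f=c\prod_i(x-b_i)$ over $\overline K$, so that $\mu(f(a_Q))=\mu(c)+\sum_i\mu(a_Q-b_i)$; each summand either stabilizes or is eventually strictly increasing, and the latter happens exactly when $b_i$ is a limit of a tail of the sequence, which the previous step excludes. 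Hence every $\mu(f(a_Q))$ stabilizes and the sequence is of transcendental type. (Once one notices that the type is tested over $\overline K$, ``no limit in $\overline K$'' is just the degree-one instance of the transcendence of the type, so the two steps should be carried out in that order rather than independently.) As written, the second half of your proof does not close.
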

We also want to describe the realtion between key polynomials and minimal pairs. The next result gives us such relation.

\begin{Teo}[Theorem 1.1 of \cite{g}]\label{main1}
Let $Q\in K[x]$ be a monic irreducible polynomial and choose a root $a$ of $Q$ such that $\mu(x-a)=\delta(Q)$. Then $Q$ is a key polynomial for $\nu$ if and only if $(a,\delta(Q))$ is a minimal pair for $\nu$. Moreover, $(a,\delta(Q))$ is a minimal pair of definition for $\nu$ if and only if $\nu=\nu_Q$.
\end{Teo}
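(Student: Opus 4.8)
The plan is to reduce everything to the relationship between $\epsilon(f)$, $\delta(f)$ and the data of the minimal pair, using Proposition \ref{Profmagica} (which says $\epsilon(f)=\delta(f)$ for monic $f$) as the bridge. First I would fix notation: let $Q$ be monic irreducible of degree $n$, let $a\in\overline K$ be a root with $\mu(x-a)=\delta(Q)$, and set $\delta:=\delta(Q)$. The key observation to establish at the outset is that for any monic $f\in K[x]$ one has $[K(\text{a root of }f):K]\le\deg(f)$, with equality exactly when $f$ is irreducible, and that $\delta(f)$ computed via $\mu$ is governed by how close the roots of $f$ come to $a$ in the valuation $\mu$. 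I would make precise the comparison: $\mu(x-a)\ge\mu(x-a')$ for a root $a'$ of $f$ forces $\mu(a-a')\ge\delta$, and conversely.

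For the first equivalence I would argue both directions. Suppose $Q$ is a key polynomial; I want to show $(a,\delta)$ is a minimal pair, i.e. for every $b\in\overline K$, $\mu(b-a)\ge\delta$ implies $[K(b):K]\ge[K(a):K]=n$. Let $p\in K[x]$ be the minimal polynomial of $b$, so $\deg(p)=[K(b):K]$; it suffices to show $\deg(p)\ge n$. Since $\mu(b-a)\ge\delta=\mu(x-a)$, one checks $\delta(p)=\max_i\mu(x-b_i)\ge\mu(x-b)\ge\min\{\mu(x-a),\mu(a-b)\}=\delta$ — here I need that among the conjugates $b_i$ of $b$ the one closest to $a$ (which may be $b$ itself after a suitable choice of $\mu$, or handled by the fact that $\delta(p)$ is independent of $\mu$ by Proposition \ref{Profmagica}) gives $\delta(p)\ge\delta$. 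Hence $\epsilon(p)=\delta(p)\ge\delta=\epsilon(Q)$ by Proposition \ref{Profmagica}, and the key polynomial property of $Q$ yields $\deg(p)\ge\deg(Q)=n$. Conversely, if $(a,\delta)$ is a minimal pair, take $f\in K[x]$ with $\epsilon(f)\ge\epsilon(Q)$; I may assume $f$ monic (the general case reduces to an irreducible factor realizing the max in $\delta(f)$). Then $\delta(f)=\epsilon(f)\ge\epsilon(Q)=\delta$, so some root $a'$ of $f$ has $\mu(x-a')\ge\delta$; then $\mu(a'-a)\ge\min\{\mu(x-a'),\mu(x-a)\}=\delta$, so the minimal-pair property applied to $b=a'$ gives $[K(a'):K]\ge[K(a):K]=n$, whence $\deg(f)\ge[K(a'):K]\ge n=\deg(Q)$. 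This shows $Q$ is a key polynomial.

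For the second equivalence, recall $(a,\delta)$ is a minimal pair \emph{of definition} iff additionally $\mu(x-a)=\delta\ge\mu(x-b)$ for all $b\in\overline K$, i.e. $a$ is a global maximizer of $\mu(x-\cdot)$. I would relate this to $\nu=\nu_Q$ by expanding an arbitrary $f\in K[x]$ in its $Q$-adic expansion $f=\sum f_iQ^i$ and comparing $\nu(f)$ with $\nu_Q(f)=\min_i\{\nu(f_iQ^i)\}$. One always has $\nu(f)\ge\nu_Q(f)$ once $Q$ is a key polynomial (Proposition 2.6 of \cite{f} territory); equality for all $f$ is equivalent to there being no polynomial "not fixed by $Q$", which in the minimal-pair language translates, via $\mu$, to the statement that no $b\in\overline K$ is strictly closer to $x$ than $a$ is — precisely $\delta\ge\mu(x-b)$ for all $b$. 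Concretely: if some $b$ had $\mu(x-b)>\delta$, its minimal polynomial $p$ would satisfy $\delta(p)>\delta$, hence $\nu_Q(p)<\nu(p)$ (this is the content of Theorem \ref{definofkeypol}'s $\alpha(Q)$, $\Psi(Q)$ machinery), so $\nu\ne\nu_Q$; conversely if $\nu\ne\nu_Q$ there is a monic $g$ with $\nu_Q(g)<\nu(g)$, and an irreducible factor of $g$ realizing $\delta(g)=\epsilon(g)>\epsilon(Q)=\delta$ produces a root $b$ with $\mu(x-b)\ge\delta(g)>\delta$, violating the definition-of-pair condition.

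I expect the main obstacle to be the careful bookkeeping around the choice of the extension $\mu$ and the passage between a polynomial $f$ and the irreducible factor (equivalently, the conjugate root $a'$) that realizes $\delta(f)$: one must make sure that the inequalities $\mu(x-a')\ge\delta$ and $\mu(a-a')\ge\delta$ are extracted for a root lying in the correct $\mu$-proximity class, and that degrees of minimal polynomials versus degrees of $f$ are compared in the right direction. Proposition \ref{Profmagica} is what makes this legitimate, since it guarantees $\delta(f)$ does not depend on $\mu$, so I am free to choose $\mu$ so that the relevant root is as close to $a$ (or to $x$) as possible; with that freedom the estimates above go through cleanly. The rest — the $Q$-expansion comparison for the "of definition" half — is routine once Theorem \ref{definofkeypol} is invoked.
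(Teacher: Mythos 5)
Your first equivalence is correct and is essentially the intended argument: both directions reduce to the chain $\mu(x-b)\geq\min\{\mu(x-a),\mu(a-b)\}$ (and its reverse), the identity $\deg(p)=[K(b):K]$ for the minimal polynomial $p$ of $b$, and Proposition~\ref{Profmagica} to convert between $\epsilon$ and $\delta$. The worry you raise about ``choosing $\mu$ so that the relevant root is as close as possible'' is a red herring: in the forward direction $b$ itself is a root of its minimal polynomial with $\mu(x-b)\geq\delta$, and in the converse direction you simply take a root $a'$ of $f$ realizing the maximum in $\delta(f)$; no re-choice of $\mu$ is needed, and irreducibility of $Q$ gives $[K(a):K]=\deg(Q)$.

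The gap is in the ``moreover'' part, specifically in the direction $\nu\neq\nu_Q\Rightarrow(a,\delta)$ is not a pair of definition. You take $g$ with $\nu_Q(g)<\nu(g)$ and assert $\epsilon(g)>\epsilon(Q)$, attributing this to ``Theorem~\ref{definofkeypol}'s $\alpha(Q)$, $\Psi(Q)$ machinery''; but that theorem characterizes which polynomials are key polynomials and says nothing about the implication $\nu_Q(g)<\nu(g)\Rightarrow\epsilon(g)>\epsilon(Q)$, i.e., the contrapositive claim that $\epsilon(g)\leq\epsilon(Q)$ forces $\nu_Q(g)=\nu(g)$. This is a genuine theorem, not bookkeeping. (The opposite implication, which you use for the other direction, \emph{is} elementary: if $\nu_Q(f)=\nu(f)$ then, writing $f=\sum f_iQ^i$, one checks $\epsilon(f)\leq\max_i\epsilon(f_iQ^i)=\max_i\max\{\epsilon(f_i),\epsilon(Q)\}=\epsilon(Q)$, using $\delta(gh)=\max\{\delta(g),\delta(h)\}$ and $\epsilon(f_i)<\epsilon(Q)$ from $\deg(f_i)<\deg(Q)$.) The missing direction is what the published proof supplies by identifying $\nu_Q$ with the Alexandru--Popescu--Zaharescu valuation attached to the minimal pair, namely $\nu_Q\bigl(\prod_j(x-b_j)\bigr)=\sum_j\min\{\mu(a-b_j),\delta\}$ up to the value of the leading coefficient; comparing this term by term with $\nu(f)=\sum_j\mu(x-b_j)$ shows equality holds exactly when no root $b_j$ satisfies $\mu(x-b_j)>\delta$, which is the pair-of-definition condition. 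You need either to prove this product formula for $\nu_Q$ or to cite the precise statement (Proposition~2.10 of the Novacoski--Spivakovsky paper, not Theorem~\ref{definofkeypol}); as written, the second equivalence does not close.
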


\section{Examples}\label{Exaampkes}

Let $k$ be a perfect field of characteristic $p>0$ (e.g., $k=\F_p$) and $K=k(y)^{\frac{1}{p^\infty}}$ the perfect hull of $k(y)$. We can consider an embedding
\[
\iota:K\lra k((t^\Q)),
\]
sending $y$ to $t$. Let $\nu_0$ the valuation on $K$ induced by the $t$-adic valuation on $k((t^\Q))$.

Let $x$ be an indeterminate over $K$ and extend $\nu_0$ to $K[x]$ by setting
\[
\nu_1\left(a_0+a_1x+\ldots+a_nx^n\right):=\min_{0\leq i\leq n}\left\{\nu_0(a_i)-\frac{i}{p}\right\}.
\]
In the MacLane-Vaqui\'e's language, we have that $\nu_1$ is a the monomial valuation given by
\[
\nu_1=\left[\nu_0;\nu_1(x)=-\frac{1}{p}\right].
\]
One can show that $\phi_2:=x-y^{-\frac{1}{p}}$ is a key polynomial for $\nu_1$ and we consider the augmented valuation
\[
\nu_2:=\left[\nu_1;\nu_2(\phi_2)=-\frac{1}{p^2}\right].
\]
Then one can show that $\phi_3:=\phi_1-y^{-\frac{1}{p^2}}=x-y^{-\frac 1p}-y^{-\frac{1}{p^2}}$ is a key polynomial for $\nu_2$ and define
\[
\nu_3:=\left[\nu_2;\nu_3(\phi_3)=-\frac{1}{p^3}\right]
\]
We proceed on this manner, until we obtain a sequence of valuations $\{\nu_n\}_{n\in\N}$ for which
\[
\phi_{n+1}=x-\sum_{i=0}^n y^{-\frac{1}{p^i}}\in K[x].
\]
is a key polynomial for $\nu_n$ and
\[
\nu_{n+1}:=\left[\nu_n;\nu_{n+1}(\phi_{n+1})=-\frac{1}{p^{n+1}}\right].
\]
Setting $a_n=\displaystyle\sum_{i=0}^n y^{-\frac{1}{p^i}}$ we have the following.
\begin{Claim}
\begin{itemize}
\item $\{a_n\}_{n\in\N}\subseteq K$ is a pseudo-convergent sequence for $\nu_0$.
\item $x\in K[x]$ is a pseudo-limit for $\{a_n\}_{n\in\N}$, considering the valuation
\[
\nu_\omega:=\displaystyle\lim_{n\rightarrow\infty}\nu_n\mbox{ on }K[x].
\]
\item The pseudo-convergent sequence $\{a_n\}_{n\in \N}$ is of algebraic type.
\item $\phi_\omega:=x^p-x-y^{-1}$ is a monic polynomial, of smallest degree, not fixed by $\{a_n\}_{n\in \N}$.
\end{itemize}
\end{Claim}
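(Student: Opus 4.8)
The plan is to verify the four bullet points in order, using the explicit description $a_n=\sum_{i=0}^n y^{-1/p^i}$ and the embedding $\iota\colon K\lra k((t^\Q))$ sending $y$ to $t$, under which $\nu_0(y)=1$ and more generally $\nu_0(y^{-1/p^i})=-1/p^i$.

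For the first bullet, I would compute $a_{n+1}-a_n=y^{-1/p^{n+1}}$, so $\nu_0(a_{n+1}-a_n)=-1/p^{n+1}$, a strictly increasing sequence in $\nu_0 K$. Since for $m<n$ one has $a_n-a_m=\sum_{i=m+1}^n y^{-1/p^i}$ and the terms have strictly increasing values $-1/p^{m+1}<-1/p^{m+2}<\cdots$, the ultrametric inequality gives $\nu_0(a_n-a_m)=-1/p^{m+1}=\nu_0(a_{m+1}-a_m)$; feeding this into the pseudo-convergence inequality $\nu(a_\sigma-a_\rho)<\nu(a_\tau-a_\sigma)$ for $\rho<\sigma<\tau$ is then immediate since $-1/p^{\rho+1}<-1/p^{\sigma+1}$. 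This also shows the sequence has no last element and (by the same value computation, or by irrationality of the sum $\sum_i t^{-1/p^i}$ over the value group/Puiseux considerations) no pseudo-limit in $K$.

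For the second bullet, I must show $\nu_\omega(x-a_n)=\nu_\omega(a_{n+1}-a_n)=-1/p^{n+1}$ for every $n$, where $\nu_\omega=\lim_n\nu_n$. Here I would use that $x-a_n=\phi_{n+1}$ is a key polynomial for $\nu_n$ and that $\nu_{n+1}(\phi_{n+1})=-1/p^{n+1}$ by construction of the augmented valuation; since $\nu_\omega$ agrees with $\nu_{n+1}$ on polynomials of degree $<\deg\phi_{n+2}=1$... — more carefully, $\nu_\omega(\phi_{n+1})=\sup_m\nu_m(\phi_{n+1})$, and for $m\ge n+1$ the $\phi_{n+1}$-expansion of $\phi_{n+1}$ is trivial so $\nu_m(\phi_{n+1})=\nu_{n+1}(\phi_{n+1})=-1/p^{n+1}$ stabilizes. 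This gives $\nu_\omega(x-a_n)=-1/p^{n+1}=\nu_0(a_{n+1}-a_n)$, which is exactly the pseudo-limit condition.

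For the third and fourth bullets I would consider the polynomial $g(x)=x^p-x-y^{-1}$ and show first that it is not fixed by $\{a_n\}$, i.e.\ that $\nu_\omega(g(a_n))$ is strictly increasing: substituting, $a_n^p=\sum_{i=0}^n y^{-1/p^{i-1}}=y+\sum_{i=0}^{n-1}y^{-1/p^i}=y+a_{n-1}$ using that $k$ is perfect of characteristic $p$ and Frobenius is additive, hence $a_n^p-a_n-y^{-1}=(y+a_{n-1})-a_n-y^{-1}=y-y^{-1/p^n}-y^{-1}$, wait — recompute: $a_n^p-a_n = y+a_{n-1}-a_n = y - y^{-1/p^n}$, so $g(a_n)=y-y^{-1/p^n}-y^{-1}$; then $g(a_n)-g(a_{n-1}) = -y^{-1/p^n}+y^{-1/p^{n-1}}$, which has value $-1/p^n\to 0^-$, strictly increasing, so $\nu(g(a_n))$ strictly increases and $g$ is not fixed, and being of degree $p$ and monic it witnesses the algebraic type (bullet three) and is a candidate for smallest-degree non-fixed polynomial. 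To finish bullet four I would argue no polynomial of degree $<p$ can fail to be fixed: any such $h$ has $h(x)=\sum_{j<p} c_j x^j$, and since $\nu_\omega(x-a_n)\to 0$ one shows $\nu_\omega(h(a_n))$ stabilizes — concretely, $h(a_n)=h(x)-\big(h(x)-h(a_n)\big)$ and the difference is divisible by $(x-a_n)$ times a polynomial of bounded value, whose value tends to $\nu_\omega(h)$; the degrees $1,\dots,p-1$ not being divisible by $p$ prevents the Frobenius-cancellation phenomenon that makes $g$ special, and one checks the limit $\nu(h(a_n))$ is reached, i.e.\ condition (\ref{condforpscstotra}) holds for $h$. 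Finally $\phi_\omega=x^p-x-y^{-1}$ is manifestly monic.

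The main obstacle is bullet four: proving minimality of the degree, i.e.\ that every $h\in K[x]$ with $\deg h<p$ is fixed by the sequence. The content is that the only source of "algebraic type" behaviour here is the additive polynomial $x^p-x$ interacting with the $p$-power structure of $K=k(y)^{1/p^\infty}$, so one needs a clean argument — most naturally via Theorem~\ref{thmonalgimmext} and the fact that $\nu_\omega$ restricted to $K[x]_{<p}$ is the $\phi_\omega$-truncation-free part, or directly by showing $\nu_\omega(h(a_n))=\nu_\omega(h(x))$ eventually using that $x$ is a pseudo-limit and $\nu_\omega(x-a_n)\to 0^-$ cofinally in the (divisible) value group. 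Everything else is a direct valuation computation in $k((t^\Q))$.
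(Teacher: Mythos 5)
Your bullets one and two are essentially correct: the value computation $\nu_0(a_n-a_m)=-1/p^{m+1}$ gives pseudo-convergence, and the stabilization $\nu_m(\phi_{n+1})=-1/p^{n+1}$ for $m\ge n+1$ (which follows from writing $\phi_{n+1}=\phi_m+(a_{m-1}-a_n)$ in the $\phi_m$-expansion, rather than from the ``trivial $\phi_{n+1}$-expansion'' as you put it) gives the pseudo-limit property of $x$. The genuine gap is in bullet three. Your Frobenius computation is wrong: with the paper's displayed $a_n=\sum_{i=0}^n y^{-1/p^i}$, the $i=0$ term is $y^{-1}$, whose $p$-th power is $y^{-p}$, so $a_n^p=y^{-p}+a_{n-1}$, not $y+a_{n-1}$. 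Worse, your own formula $g(a_n)=y-y^{-1/p^n}-y^{-1}$ has $\nu_0$-value $\min\{1,-1/p^n,-1\}=-1$ for every $n$ --- \emph{constant} --- so it directly contradicts your assertion that $\nu(g(a_n))$ strictly increases; and showing that the differences $g(a_n)-g(a_{n-1})$ have increasing values (which in any case equal $-1/p^{n-1}$, not $-1/p^n$) never suffices for that. The resolution is that the displayed formula for $a_n$ is inconsistent with $\phi_2=x-y^{-1/p}$, $\phi_3=x-y^{-1/p}-y^{-1/p^2}$ and with $\eta$ being a root of $\phi_\omega$: the intended sequence is $a_n=\sum_{i=1}^n y^{-1/p^i}$. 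With that indexing, $a_n^p=\sum_{i=1}^n y^{-1/p^{i-1}}=y^{-1}+a_{n-1}$, hence $g(a_n)=a_n^p-a_n-y^{-1}=a_{n-1}-a_n=-y^{-1/p^n}$ and $\nu_0(g(a_n))=-1/p^n$ strictly increases to $0$; this is what bullet three (and hence the algebraic-type assertion) actually requires. As written, your computation would prove that $\phi_\omega$ \emph{is} fixed by the sequence.

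For bullet four you concede you do not have a proof, and the sketch (``degrees not divisible by $p$ prevent Frobenius cancellation'') is not an argument. A workable route: for monic $h$ of degree $d<p$, factor $h=\prod_j(x-b_j)$ over $\overline K$, so $\nu_0(h(a_n))=\sum_j\mu(a_n-b_j)$; for fixed $b\in\overline K$ the sequence $\mu(a_n-b)$ is eventually constant unless $b$ is a pseudo-limit of $\{a_n\}$ in $\overline K$, and since $\mu(\eta-a_n)=-1/p^{n+1}$ the pseudo-limits in $\overline K$ are exactly the elements $\eta+\epsilon$ with $\mu(\epsilon)\ge 0$. So the whole content of bullet four is the statement that no element of the form $\eta+\epsilon$, $\mu(\epsilon)\ge 0$, has degree $<p$ over $K$ (equivalently, that $(\eta,0)$ is a minimal pair, or that the degree-$p$ immediate extension $K(\eta)|K$ admits no intermediate approximation of smaller degree). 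That step is missing and is not supplied by anything else in your argument; until it is filled in, bullet four is unproved.
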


\begin{Claim}
The sequence $\{\nu_n\}_{n\in\N}$ is an augmented sequence of valuations on $K[x]$ and $\phi_\omega$ is a limit key polynomial for $\{\nu_n\}_{n\in\N}$.
\end{Claim}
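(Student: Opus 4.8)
The plan is to treat the two assertions separately. For the first, that $\{\nu_n\}_{n\in\N}$ is an augmented sequence of valuations, I would verify the conditions in the definition of a family of augmented iterated valuations. The construction already exhibits each $\nu_{n+1}$ in the form $[\nu_n;\nu_{n+1}(\phi_{n+1})=-1/p^{n+1}]$ with $\phi_{n+1}$ a key polynomial for $\nu_n$, so only two things remain. First, the augmentation is legitimate, i.e. $-1/p^{n+1}>\nu_n(\phi_{n+1})$: since $\phi_{n+1}-\phi_n=-y^{-1/p^n}$, the $\phi_n$-expansion of $\phi_{n+1}$ is $-y^{-1/p^n}+1\cdot\phi_n$, and the augmented-valuation formula for $\nu_n=[\nu_{n-1};\nu_n(\phi_n)=-1/p^n]$ (with $\nu_1=[\nu_0;\nu_1(x)=-1/p]$ in the base case) gives $\nu_n(\phi_{n+1})=\min\{-1/p^n,-1/p^n\}=-1/p^n<-1/p^{n+1}$. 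Second, the predecessor conditions: the immediate predecessor of $n+1$ in $\N$ is $n$, the degrees satisfy $\deg\phi_n=\deg\phi_{n+1}=1$, and when $n$ is not the least index one needs that $\phi_{n+1}$ and $\phi_n$ are not $\nu_n$-equivalent; but $\nu_n(\phi_{n+1}-\phi_n)=\nu_0(y^{-1/p^n})=-1/p^n$ does not exceed $\nu_n(\phi_n)=-1/p^n$, so $\mathrm{in}_{\nu_n}(\phi_{n+1})\neq\mathrm{in}_{\nu_n}(\phi_n)$. This settles the first assertion.

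For the second assertion I must show that $\phi_\omega=x^p-x-y^{-1}$ is $A$-minimal and $A$-irreducible, where $A=\N$. For $A$-minimality the key point is that $\deg_{\phi_n}(\mathrm{in}_{\nu_n}(\phi_\omega))=p$ for every $n$, where $\deg_{\phi_n}$ records the largest power of $\phi_n$ attaining the minimal value in the $\phi_n$-expansion. Indeed, $\phi_n=x-a_{n-1}$ has degree one, and the Taylor expansion of $\phi_\omega$ about $a_{n-1}$ has constant term $\phi_\omega(a_{n-1})=-y^{-1/p^{n-1}}$ (a short computation, as in the previous Claim), linear coefficient $\partial_1\phi_\omega=-1$, vanishing coefficients $\partial_j\phi_\omega=0$ for $1<j<p$ (characteristic $p$), and leading term $\phi_n^p$; since $\nu_0(y^{-1/p^{n-1}})=-1/p^{n-1}=p\cdot(-1/p^n)=\nu_n(\phi_n^p)$ while the linear term contributes the strictly larger value $-1/p^n$, the minimal value $\nu_n(\phi_\omega)=-1/p^{n-1}$ is attained at $\phi_n$-degree $p$. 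Because $\nu_n$ equals its $\phi_n$-truncation (a degree-one augmentation), $\deg_{\phi_n}$ is additive on $\mathrm{gr}_{\nu_n}(K[x])$, so any $f$ with $\phi_\omega\mid_{\nu_n}f$ satisfies $\deg f\geq\deg_{\phi_n}(\mathrm{in}_{\nu_n}(f))\geq\deg_{\phi_n}(\mathrm{in}_{\nu_n}(\phi_\omega))=p=\deg\phi_\omega$. Hence no polynomial of degree $<p$ is $\nu_n$-divisible by $\phi_\omega$ for any $n$, and in particular $\phi_\omega$ is $A$-minimal.

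For $A$-irreducibility I would route through the limit valuation $\nu_\omega:=\lim_n\nu_n$. First, $\phi_\omega$ is irreducible over $K$: a root $a\in K$ would satisfy $a^p-a=y^{-1}$, hence would be a pseudo-limit of $\{a_n\}$ in $K$ (one checks $\nu_0(a-a_n)=-1/p^{n+1}=\nu_0(a_{n+1}-a_n)$, using $a_n^p-a_n=y^{-1}-y^{-1/p^n}$), contradicting the fact that $\{a_n\}$ has no pseudo-limit in $K$, as every element of $k(y)^{1/p^\infty}$ lies in some $k(y^{1/p^m})$ and cannot realize the exponents $-1/p^i$ for all $i$. Next, by the previous Claim $\{a_n\}$ is a pseudo-convergent sequence of algebraic type, without limit in $K$, for which $x$ is a pseudo-limit relative to $\nu_\omega$, and $\phi_\omega$ has minimal degree among the polynomials not fixed by it; so Theorem \ref{compthemkppsc} applies and $\phi_\omega$ is a key polynomial for $\nu_\omega$, hence $\nu_\omega$-irreducible and $\nu_\omega$-minimal. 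It then remains to match $A$-divisibility by $\phi_\omega$ with $\nu_\omega$-divisibility by $\phi_\omega$: for any $f$ write $f=q_f\phi_\omega+r_f$ with $\deg r_f<p$; using $A$-minimality, resp. $\nu_\omega$-minimality, one checks that $\phi_\omega\mid_{\nu_n}f$, resp. $\phi_\omega\mid_{\nu_\omega}f$, holds exactly when $r_f=0$ or the value of $q_f\phi_\omega$ is strictly smaller than that of $r_f$; since $\deg r_f<p$ the values $\nu_n(r_f)$ stabilise to $\nu_\omega(r_f)$ while $\nu_n(q_f\phi_\omega)$ is nondecreasing with supremum $\nu_\omega(q_f\phi_\omega)$, so the strict inequality at level $\omega$ is equivalent to its eventual validity along the family. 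Thus $\phi_\omega\mid_A f$ if and only if $\phi_\omega\mid_{\nu_\omega}f$, and the $A$-irreducibility of $\phi_\omega$ follows from its $\nu_\omega$-irreducibility.

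The routine steps here are the value computations for $\nu_n(\phi_{n+1})$, $\nu_n(\phi_n)$ and the partial derivatives of $\phi_\omega$. The delicate point, which I expect to be the real obstacle, is the last paragraph: establishing $A$-irreducibility. Both the additivity of $\deg_{\phi_n}$ on the graded ring and, above all, the precise dictionary between $A$-divisibility for the family $\{\nu_n\}$ and divisibility at the limit $\nu_\omega$ (handling the possibly non-strict inequality between the values of $q_f\phi_\omega$ and $r_f$, and the fact that $\nu_\omega$ is valuation-algebraic and hence not equal to its own $\phi_\omega$-truncation) require care; alternatively, one could invoke the comparison results of \cite{Vaq} relating limit key polynomials of a family to key polynomials of its limit valuation.
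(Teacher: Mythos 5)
The paper states this Claim without proof, so your argument has to stand on its own. Its first half does: the verification that $\{\nu_n\}$ is an augmented sequence of iterated valuations (the computation $\nu_n(\phi_{n+1})=\min\{-1/p^n,-1/p^n\}=-1/p^n<-1/p^{n+1}$, the equality of degrees, and the non-$\nu_n$-equivalence of $\phi_{n+1}$ and $\phi_n$ because $\nu_n(\phi_{n+1}-\phi_n)=-1/p^n=\nu_n(\phi_n)$ is not strictly larger) is correct, and so is the proof of $A$-minimality: since $\nu_0(\phi_\omega(a_{n-1}))=-1/p^{n-1}=p\,\nu_n(\phi_n)$ while the linear term of the Taylor expansion has the strictly larger value $-1/p^n$, the initial form $\mathrm{in}_{\nu_n}(\phi_\omega)$ has $\phi_n$-degree $p$, and the additivity of the $\phi_n$-degree on $\mathrm{gr}_{\nu_n}(K[x])$ is legitimate because $\nu_n$ is monomial in the degree-one polynomial $\phi_n$.

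The gap is in the $A$-irreducibility, exactly where you suspected. Two steps fail. First, Theorem \ref{compthemkppsc} makes $\phi_\omega$ a Spivakovsky limit key polynomial for $\nu_\omega$; the comparison theorems of Section \ref{spivkegpplo} then only make it a MacLane--Vaqui\'e key polynomial for the truncation $(\nu_\omega)_{\phi_\omega}$, not for $\nu_\omega$ itself, so the inference ``hence $\nu_\omega$-irreducible'' is unjustified. Second, and more seriously, the proposed dictionary between $A$-divisibility and $\nu_\omega$-divisibility is false as stated: $\nu_\omega(q_f\phi_\omega)=\sup_n\nu_n(q_f\phi_\omega)$ need not be attained, so the eventual validity of the strict inequality $\nu_n(q_f\phi_\omega)<\nu_n(r_f)$ is compatible with equality at level $\omega$. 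The polynomial $\phi_\omega$ itself exhibits this degeneration: $\nu_n(\phi_\omega)=-1/p^{n-1}$ increases strictly to $\nu_\omega(\phi_\omega)=0$, so for $f=\phi_\omega+c$ with $\nu(c)=0$ one has $\phi_\omega\mid_{\nu_n}f$ for every $n$, hence $\phi_\omega\mid_A f$, while at level $\omega$ the two values being compared coincide and your criterion gives no conclusion. (There is also the smaller issue that the negation of $A$-divisibility only says ``$\nmid_{\nu_n}$ for cofinally many $n$,'' and two cofinal subsets of $\N$ need not meet, so the contrapositive argument for irreducibility needs the eventual constancy of the relation $\phi_\omega\mid_{\nu_n}f$ in $n$.) The clean repair is to bypass $\nu_\omega$ entirely and invoke Vaqui\'e's characterization in \cite{Vaq} of the limit key polynomials of a continuous family as the monic polynomials of minimal degree whose values $\nu_\alpha(f)$ do not stabilize; with that, the Claim reduces to the strict increase of $\nu_n(\phi_\omega)$, which you computed, together with the previous Claim's assertion that every polynomial of degree less than $p$ is fixed by $\{a_n\}$.
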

Now take $\gamma\in \Q\cup\{\infty\}$ with $\gamma\geq 0$. Since $0>-\frac{1}{p^n}=\nu_n(\phi_\omega)$ for every $n\in\N$, we can consider the valuation
\begin{equation}\label{equandoasjdpsoa}
\nu_{\omega+1}:=\left[\{\nu_n\}_{n\in\N},\nu_{\omega+1}(\phi_\omega)=\gamma\right].
\end{equation}

\begin{Obs}
Let
\[
\eta:=\sum_{i=0}^\infty y^{-\frac{1}{p^i}}\in\overline{K},
\]
which is a root of $\phi_\omega$. If $\gamma=\infty$, then $\nu_{\omega+1}$ induces a valuation on $K(\eta)=K[x]/(\phi_\omega)$ which is exactly the valuation given on Theorem \ref{thmonalgimmext}. In this case, the pseudo-convergent sequence $\{\nu_n\}_{n\in\N}$ can be thought of as a ``pseudo-convergent sequence of algebraic type with an algebraic limit" (because in this case $\eta$ is a limit for it).
\end{Obs}

The construction of $\nu_{\omega+1}$ above can be generalized in the following way. Let $\eta'\in k((t^\Q))$ and extend $\iota:K\lra k((t^\Q))$ to a map $K[x]\lra k((t^\Q))$ (which we call again $\iota$) sending $x$ to $\eta'$.  Since $k((t^\Q))$ is algebraically closed, this defines a valuation on $K[x]$ by setting
\[
\nu_{\eta'}(a_0+\ldots+a_nx^n):=\sum_{i=1}^r\nu_t(\eta'-\eta_i)\mbox{ where }\iota(a_0)+\ldots+\iota(a_n)x^n=\prod_{i=1}^r(x-\eta_i).
\]
In particular, $\nu_{\eta'}(x-a):=\nu_t(\eta'-\iota(a))$ for each $a\in K$.
\begin{Claim}\label{eqgvalrotspiwser}
The valuation constructed in (\ref{equandoasjdpsoa}) is equals to $\nu_{\eta}$ if $\gamma=\infty$ and to $\nu_{\eta'}$ where
\[
\eta'=\eta+a\gamma+\eta''\mbox{ such that }\SU(\eta'')>\gamma\mbox{ and }a\neq 0,
\]
if $0\leq \gamma<\infty$. Moreover, if $\eta''$ is transcencental over $\iota(K)$, then $\{a_n\}$ is a ``pesudo-convergent sequence of algebraic type with a transcendental pseudo-limit" (because $\eta'$ is a limit of it).
\end{Claim}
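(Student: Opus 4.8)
The plan is to realise both valuations as the $\phi_\omega$-truncation of one and the same valuation, and to pin that truncation down with Theorem~\ref{main1}. I would begin with two observations about $\nu_{\omega+1}$. By the definition of a limit key valuation, for $f\in K[x]$ with $\phi_\omega$-expansion $f=f_0+f_1\phi_\omega+\dots+f_m\phi_\omega^m$ we have $\nu_{\omega+1}(f)=\min_i\{\nu_\omega(f_i)+i\gamma\}$; since $\deg f_i<p=\deg\phi_\omega$ the valuations $\nu_{\omega+1}$ and $\nu_\omega$ agree on polynomials of degree $<p$, so this equality says exactly that $\nu_{\omega+1}$ equals its own $\phi_\omega$-truncation, whence $\nu_{\omega+1}$ is valuation-transcendental by Theorem~\ref{Thmsobretruncvalutrans} (and has trivial support when $\gamma<\infty$). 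Next, using Proposition~\ref{Profmagica} together with the fact that the characteristic is $p$, from $\phi_\omega=x^p-x-y^{-1}$ one gets $\partial_1\phi_\omega=-1$, $\partial_b\phi_\omega=0$ for $1<b<p$, and $\partial_p\phi_\omega=1$; hence $\epsilon(\phi_\omega)=\max\{\nu_{\omega+1}(\phi_\omega),\nu_{\omega+1}(\phi_\omega)/p\}=\gamma$ (as $\gamma\ge0$), i.e. $\delta(\phi_\omega)=\gamma$.

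Now fix an embedding $\overline K\hookrightarrow k((t^\Q))$ extending $\iota$, so that $\eta=\sum_iy^{-1/p^i}\in\overline K$ and the roots of $\phi_\omega$ are $\eta,\eta+1,\dots,\eta+(p-1)$. Choosing the extension $\mu$ of $\nu_{\omega+1}$ with $\mu(x-\eta)=\delta(\phi_\omega)=\gamma$ (for $\gamma>0$ the root realising $\delta(\phi_\omega)$ must be $\eta$ itself, since the others differ from it by a unit), Theorem~\ref{main1} applied to $Q=\phi_\omega$ --- legitimate because $\nu_{\omega+1}$ is its own $\phi_\omega$-truncation --- shows that $(\eta,\gamma)$ is a minimal pair of definition for $\nu_{\omega+1}$. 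Two consequences will be used: $\mu(x-b)\le\gamma$ for all $b\in\overline K$, hence $\epsilon(f)=\delta(f)\le\gamma$ for every monic $f\in K[x]$; and this bound is strict when $\deg f<p$, since a root $a$ of such an $f$ with $\mu(x-a)=\gamma$ would satisfy $\mu(a-\eta)\ge\gamma$, forcing $[K(a):K]\ge[K(\eta):K]=p$ by the minimal-pair condition, a contradiction. This already disposes of the case $\gamma=\infty$: there $\nu_{\omega+1}$ has support $(\phi_\omega)$ and, by the remark above, induces on $K(\eta)=K[x]/(\phi_\omega)$ the valuation of Theorem~\ref{thmonalgimmext}; since $\eta$ is a pseudo-limit of $\{a_n\}$ for the $t$-adic valuation, the uniqueness part of that theorem forces the induced valuation to be the restriction of $\nu_t$, i.e. $\nu_{\omega+1}=\nu_\eta$.

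Assume now $\gamma<\infty$ and take $\eta'=\eta+\delta$ with $\nu_t(\delta)=\gamma$ and $\eta''$ transcendental over $\iota(K)$, so that $\eta'$ is transcendental over $\overline K$ and $\nu_{\eta'}(f)=\nu_t(\iota(f)(\eta'))$ is a valuation on $K[x]$. First I would prove that $\nu_{\eta'}(g)=\nu_\omega(g)$ for every $g\in K[x]$ of degree $<p$: expanding $\iota(g)(\eta')=\sum_b(\partial_bg)(\eta)\,\delta^b$ around $\eta$, the term $b=0$ has value $\nu_t(g(\eta))=\nu_\omega(g)$ (the last equality is the $\gamma=\infty$ identification of the previous paragraph, valid for every polynomial of degree $<p$), while each term with $b\ge1$ has value $\nu_\omega(\partial_bg)+b\gamma>\nu_\omega(g)$, because the invariant $\epsilon(g)$ computed with respect to $\nu_\omega$ coincides with the one computed with respect to $\nu_{\omega+1}$, which is $<\gamma$; hence the $b=0$ term dominates. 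Then, using the Artin--Schreier identity $\iota(\phi_\omega)(\eta')=(\eta+\delta)^p-(\eta+\delta)-t^{-1}=\delta^p-\delta$, which has value $\gamma$ (for $\gamma=0$ one needs, in addition, that the leading coefficient of $\delta$ produce a nonzero Artin--Schreier residue), the $\phi_\omega$-expansion gives $\iota(f)(\eta')=\sum_i\iota(f_i)(\eta')(\delta^p-\delta)^i$, whose $i$-th summand has value $\nu_\omega(f_i)+i\gamma$ by the first step; once the summands attaining the minimum of these values are seen not to cancel, we obtain $\nu_{\eta'}(f)=\min_i\{\nu_\omega(f_i)+i\gamma\}=\nu_{\omega+1}(f)$ for all $f$, i.e. $\nu_{\omega+1}=\nu_{\eta'}$. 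For the ``moreover'' part, $\nu_t(\eta'-a_n)=\nu_t((\eta-a_n)+\delta)=-1/p^{n+1}=\nu_t(a_{n+1}-a_n)$ shows that the transcendental element $\eta'$ is a pseudo-limit of the algebraic-type sequence $\{a_n\}$.

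The main obstacle is the no-cancellation step in the last paragraph: showing that $\nu_{\eta'}$ equals its own $\phi_\omega$-truncation, equivalently that in $\iota(f)(\eta')=\sum_i\iota(f_i)(\eta')(\delta^p-\delta)^i$ the summands of minimal value do not cancel. This is exactly where the precise shape of $\eta'$ is needed: the genericity of $\eta'$ inside the ball $\{b:\nu_t(b-\eta)\ge\gamma\}$ (carried by the transcendence of $\eta''$), together with the bound $\epsilon(f)\le\gamma$ coming from the minimal-pair-of-definition property of $\nu_{\omega+1}$, is what rules out the cancellation.
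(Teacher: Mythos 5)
The paper states this Claim without proof, so there is nothing to compare your argument against; judged on its own terms, your treatment of the case $\gamma=\infty$ and of the ``moreover'' part is sound, but the step you yourself flag in the case $0\leq\gamma<\infty$ --- that the summands of minimal value in $\iota(f)(\eta')=\sum_i\iota(f_i)(\eta')(\delta^p-\delta)^i$ do not cancel --- is not a technical loose end: it is false, so the argument cannot be completed as planned. Take $\gamma>0$ with $y^\gamma\in K$ (e.g.\ $\gamma=1$) and let $g=\phi_\omega+ay^\gamma-a^py^{p\gamma}$, which is the minimal polynomial over $K$ of $\eta+ay^\gamma\in\overline K$. Its $\phi_\omega$-expansion is $g_0+\phi_\omega$ with $g_0=ay^\gamma-a^py^{p\gamma}$, so $\nu_{\omega+1}(g)=\min\{\nu_0(g_0),\gamma\}=\gamma$; but $\iota(g)(\eta')=(\delta^p-\delta)+at^\gamma-a^pt^{p\gamma}=\eta''^p-\eta''$, whence $\nu_{\eta'}(g)=\nu_t(\eta'')>\gamma$. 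Here the two summands, each of value exactly $\gamma$, cancel, and $\nu_{\eta'}\neq\nu_{\omega+1}$ for \emph{every} $\eta'$ of the displayed shape.

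The obstruction is already visible inside your own first paragraph. You correctly show that $\nu_{\omega+1}$ equals its own $\phi_\omega$-truncation, hence is valuation-transcendental by Theorem \ref{Thmsobretruncvalutrans}; equivalently, by Theorem \ref{main1}, $(\eta,\gamma)$ is a minimal pair of definition for $\nu_{\omega+1}$, so $\mu(x-b)\leq\gamma$ for all $b\in\overline K$. For $\nu_{\eta'}$ with its natural extension this fails: $b=\eta+ay^\gamma\in\overline K$ satisfies $\nu_t(\eta'-\iota(b))=\nu_t(\eta'')>\gamma$. Worse, when $\eta''$ is transcendental over $\iota(K)$ the valuation $\nu_{\eta'}$ is the restriction of $\nu_t$ to $K(\eta')\subseteq k((t^{\Q}))$, whose residue field lies in $k$ and whose value group lies in $\Q$, so $\nu_{\eta'}$ is valuation-algebraic; a valuation-transcendental and a valuation-algebraic valuation cannot coincide. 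What your computation in the third paragraph \emph{does} establish (the Taylor-expansion step for $\deg<p$ together with $\nu_{\eta'}(\phi_\omega)=\gamma$) is the weaker identity $(\nu_{\eta'})_{\phi_\omega}=\nu_{\omega+1}$, i.e.\ that $\nu_{\omega+1}$ is the $\phi_\omega$-truncation of each such $\nu_{\eta'}$ and that these $\nu_{\eta'}$ are exactly the valuations admitting $\eta$ as a centre at radius $\gamma$. Either the Claim is to be read in that weaker sense, or it needs a correction; in any case you should redirect the $\gamma<\infty$ part of your proof at the truncation statement, for which your first two paragraphs already do most of the work, rather than at the literal equality $\nu_{\omega+1}=\nu_{\eta'}$.
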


So far, we have constucted an example where the sequence of key polynomials of order type $\omega$ ``is not enough to construct the valuation". In terms of pseudo-convergent sequences, this means that the pseudo-convergent sequence is of algebraic type. We will now continue the construction, starting from the valuation $\nu_{\omega+1}$ defined by the limit key polynomial $\phi_\omega$.

Let now $\gamma=0$ and $\phi_{\omega+1}=\phi_\omega+1$. Then $\phi_{\omega+1}$ is a key polynomial for $\nu_{\omega+1}$ and we can define the valuation
\[
\nu_{\omega+2}:=\left[\nu_{\omega+1};\nu_{\omega+2}(\phi_{\omega+1})=\frac{1}{p}\right].
\]
One can prove that
\[
\phi_{\omega+2}:=\phi_{\omega+1}-y^{\frac{1}{p}}=\phi_\omega-1-y^{\frac{1}{p}}
\]
is a key polynomial for $\nu_{\omega+2}$. We set
\[
\nu_{\omega+3}:=\left[\nu_{\omega+2};\nu_{\omega+3}(\phi_{\omega+2})=\frac{1+p}{p}\right].
\]
We can construct a sequence of valuations $\{\nu_{\omega+n}\}_{n\in\N}$ such that
\[
\phi_{\omega+n+1}=\phi_\omega-1-\sum_{i=1}^n y^{\frac{1+\ldots+p^i}{p^i}}\in K[x].
\]
is a key polynomial for $\nu_{\omega+n}$ and
\[
\nu_{\omega+n+1}:=\left[\nu_{\omega+n};\nu_{\omega+n+1}(\phi_{\omega+n})=\frac{1+\ldots+p^n}{p^n}\right].
\]
\begin{Claim}
The sequence $\{\nu_{\omega+n}\}_{n\in\N}$ is an augmented sequence of valuations and $\phi_{2\omega}:=\phi_\omega^p-y\phi_\omega-1$ is a limit key polynomial for $\{\nu_{\omega+n}\}_{n\in\N}$.
\end{Claim}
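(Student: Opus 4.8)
The plan is to verify both assertions in close parallel with the preceding Claim for $\{\nu_n\}_{n\in\N}$, the polynomial $\phi_\omega$ now playing the role that $x$ played there: the restrictions of the $\nu_{\omega+n}$ to the subring $K[\phi_\omega]\subseteq K[x]$ have the same shape as $\nu_1,\nu_2,\ldots$ on $K[x]$, with $\nu_{\omega+1}(\phi_\omega)=0$ replacing $\nu_1(x)=-1/p$. Two things are to be checked: (a) each $\phi_{\omega+n}$ is a Maclane--Vaqui\'e key polynomial for $\nu_{\omega+n}$ and the value assigned to $\phi_{\omega+n}$ upon forming $\nu_{\omega+n+1}$ strictly exceeds $\nu_{\omega+n}(\phi_{\omega+n})$ (which is what makes $\nu_{\omega+n+1}$ a well-defined augmented valuation, cf.\ Theorem \ref{Thmaugmentedval}), together with the combinatorial conditions defining a family of augmented iterated valuations; and (b) $\phi_{2\omega}$ is $A$-minimal and $A$-irreducible, where $A$ is the index set of the family $\{\nu_{\omega+n}\}$.

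I would establish (a) by induction on $n$. The base case is that $\phi_\omega$ is a key polynomial for $\nu_{\omega+1}$ (it is the Maclane--Vaqui\'e limit key polynomial used to define $\nu_{\omega+1}$) and that $\phi_{\omega+1}$, which differs from $\phi_\omega$ by a constant, is again a key polynomial for $\nu_{\omega+1}$: it is monic of the same degree $p$, it is irreducible over $K$ since it is an Artin--Schreier polynomial $X^p-X-c$ with $c\notin\wp(K):=\{d^p-d\mid d\in K\}$ (the obstruction being an infinite $p$-power-root series that does not lie in the field $K=k(y)^{1/p^\infty}$ of bounded $p$-denominators), and it is $\nu_{\omega+1}$-minimal and $\nu_{\omega+1}$-irreducible; one also checks $\nu_{\omega+1}(\phi_{\omega+1})=0$, which is less than the first assigned value $\gamma_1=1/p$. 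For the inductive step, if $\phi_{\omega+n}$ is a key polynomial for $\nu_{\omega+n}$ then it remains one for the augmented valuation $\nu_{\omega+n+1}$; since $\phi_{\omega+n+1}=\phi_{\omega+n}-c_n$ with $c_n\in K$ of value equal to the assigned value $\gamma_n=\nu_{\omega+n+1}(\phi_{\omega+n})$, the polynomial $\phi_{\omega+n+1}$ is monic of degree $p$ with $\nu_{\omega+n+1}(\phi_{\omega+n+1})\ge\gamma_n$, hence --- by the standard Maclane stability lemma (a monic polynomial of the same degree as a key polynomial $\phi$ for $\mu$ and of value $\ge\mu(\phi)$ is itself a key polynomial for $\mu$, of value $\mu(\phi)$) together with the Artin--Schreier irreducibility over $K$ --- a key polynomial for $\nu_{\omega+n+1}$ with $\nu_{\omega+n+1}(\phi_{\omega+n+1})=\gamma_n$. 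As the assigned values obey $p\gamma_1=1$ and $p\gamma_{n+1}=1+\gamma_n$, one gets $\gamma_n=\frac{1+p+\cdots+p^{n-1}}{p^n}\uparrow\frac{1}{p-1}$, so $\gamma_{n+1}>\gamma_n=\nu_{\omega+n+1}(\phi_{\omega+n+1})$, which closes the induction. Finally, consecutive key polynomials $\phi_{\omega+n-1},\phi_{\omega+n}$ share the degree $p$ and are not $\nu_{\omega+n}$-equivalent (they differ by the nonzero constant $c_{n-1}$, of value $\gamma_{n-1}=\nu_{\omega+n}(\phi_{\omega+n-1})$), so the combinatorial conditions hold and $\{\nu_{\omega+n}\}$ is an augmented iterated family.

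For (b), I would first note that $\deg\phi_{2\omega}=p^2$ is strictly larger than the common degree $p$ of all the $\phi_{\omega+n}$. The crux is the Frobenius telescoping: since the $p$-th power map is additive in characteristic $p$, the relations $p\gamma_1=1$, $p\gamma_{n+1}=1+\gamma_n$ force the pseudo-limit $w_0:=1+\sum_{i\ge1}y^{\gamma_i}\in k((t^\Q))$ of the constants appearing in the construction to satisfy $w_0^p=yw_0+1$; choosing $\zeta\in\overline K$ with $\phi_\omega(\zeta)=w_0$ (equivalently, $\zeta$ a pseudo-limit of the sequence underlying the construction) then gives $\phi_{2\omega}(\zeta)=\phi_\omega(\zeta)^p-y\phi_\omega(\zeta)-1=0$. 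A direct computation with $\phi_\omega$-expansions, using the same telescoping, shows $\nu_{\omega+n}(\phi_{2\omega})=p\gamma_{n-1}\uparrow\frac{p}{p-1}$ (with $\gamma_0:=0$): the sequence $\{\nu_{\omega+n}(\phi_{2\omega})\}$ is strictly increasing and bounded, so the family is convergent for $\phi_{2\omega}$ but does not stabilize on it. One then checks that $\phi_{2\omega}$ has the smallest degree among monic polynomials $f$ for which $\{\nu_{\omega+n}(f)\}$ fails to stabilize: any $f$ of degree $<p^2$ has all its $\phi_\omega$-coefficients of degree $<p$, whence $\{\nu_{\omega+n}(f)\}$ is eventually constant because $\nu_{\omega+n}(\phi_\omega^j)=0$ for all $n,j$ and $\nu_{\omega+n}$ is eventually constant on polynomials of degree $<p$. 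By the standard criterion (the same one used for $\phi_\omega$ in the preceding Claim) this makes $\phi_{2\omega}$ $A$-minimal and $A$-irreducible --- irreducibility of $\phi_{2\omega}$ over $K$, which is part of this, being recovered as for $\phi_\omega$ there. Hence $\phi_{2\omega}$ is a Maclane--Vaqui\'e limit key polynomial for $\{\nu_{\omega+n}\}_{n\in\N}$.

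The step I expect to be the main obstacle is the analysis in (b): pinning down the exponents $\gamma_i=\frac{1+p+\cdots+p^{i-1}}{p^i}$ so that the telescoping $\bigl(1+\sum y^{\gamma_i}\bigr)^p=y\bigl(1+\sum y^{\gamma_i}\bigr)+1$ holds exactly; controlling $\nu_{\omega+n}(\phi_{2\omega})$ and the degree-minimality of $\phi_{2\omega}$; and verifying the irreducibilities that underlie $[K(\zeta):K]=p^2$ (of $w^p-yw-1$ over $K$ and of the Artin--Schreier polynomial $\phi_\omega(X)-w_0$ over $K(w_0)$) by the ``reconstruct the series and observe unbounded $p$-denominators'' argument used in the base case. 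The analogous assertions for $\{\nu_n\}$ settled in the preceding Claim are the template, but everything must be redone with the inhomogeneous relation $w^p=yw+1$ in place of the purely additive $w^p=w+y^{-p}$ --- which is exactly the source of the extra factor $y$ in $\phi_{2\omega}$.
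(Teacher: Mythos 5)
The paper states this Claim without proof (it is one of several unproved Claims in the examples section), so there is no argument of the author's to compare yours against; I can only assess your plan on its own terms. Most of it is sound, and your computations in fact correct some slips in the paper's displayed formulas: the assigned values must be $\gamma_n=\frac{1+p+\cdots+p^{n-1}}{p^n}\uparrow\frac{1}{p-1}$, the telescoping relation $p\gamma_{n+1}=1+\gamma_n$ is exactly what makes $w_0=1+\sum_{i\geq 1} y^{\gamma_i}$ satisfy $w_0^p=yw_0+1$, and $\nu_{\omega+n}(\phi_{2\omega})=p\gamma_{n-1}\uparrow\frac{p}{p-1}$ is consistent with the paper's requirement $\gamma'>\frac{p}{p-1}$ for the next augmentation. (One small slip in part (a): the quoted definition of a family of augmented iterated valuations asks that $\phi_\alpha$ and $\phi_\theta$ be non-equivalent with respect to $\nu_\theta$, the \emph{earlier} valuation, whereas you verify non-equivalence with respect to the later one; this deserves a second look against Vaqui\'e's conventions for continued families.)

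The genuine gap is the degree-minimality argument in (b). You claim that for $\deg f<p^2$ the sequence $\{\nu_{\omega+n}(f)\}$ stabilizes ``because $\nu_{\omega+n}(\phi_\omega^j)=0$ for all $n,j$ and $\nu_{\omega+n}$ is eventually constant on polynomials of degree $<p$.'' That reasoning only bounds $\nu_{\omega+n}(f)$ from below by the $\phi_\omega$-truncated value $\min_j\nu_{\omega+1}(f_j)$; it does not compute $\nu_{\omega+n}(f)$, since the terms of the $\phi_\omega$-expansion can cancel. Indeed the polynomials $\phi_{\omega+n}$ themselves have degree $p<p^2$ and all their $\phi_\omega$-coefficients have value $0$, yet $\nu_{\omega+m}(\phi_{\omega+n})=\gamma_n>0$ for $m>n$; so your stated reason predicts the wrong (identically zero) values even where the conclusion happens to hold. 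What actually forces stabilization below degree $p^2$ is the point you relegate to a closing remark: a pseudo-limit $\zeta$ (a root of $\phi_\omega(x)=w_0$) satisfies $[K(\zeta):K]=p^2$, which rests on the two irreducibility statements ($W^p-yW-1$ over $K$, and $\phi_\omega(X)-w_0$ over $K(w_0)$); by the Kaplansky/minimal-pair mechanism, a polynomial of degree smaller than $[K(\zeta):K]$ has no root arbitrarily close to $\zeta$, hence is fixed by the pseudo-convergent sequence and its values stabilize. That step is the actual content of the degree-minimality claim and must be carried out, not merely flagged as an expected obstacle.
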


If $\gamma'$ is such that $\gamma>\frac{p}{p-1}$, then we can define the valuation
\begin{equation}\label{equandoasjdpsoaw}
\nu_{2\omega+1}:=\left[\{\nu_{\omega+n}\}_{n\in\N},\nu_{2\omega+1}(\phi_{2\omega})=\gamma'\right].
\end{equation}
One can deduce from what was said before, that for every $i$, $1\leq i\leq 2\omega$ the polynomial $\phi_{i}$ is a Spivakovsky's key polynomial for $\nu_{2\omega+1}$ and that the truncation $\nu_{\phi_i}$ is equal to $\nu_i$.

\noindent{\footnotesize JOSNEI NOVACOSKI\\
Departamento de Matem\'atica--UFSCar\\
Rodovia Washington Lu\'is, 235\\
13565-905 - S\~ao Carlos - SP\\
Email: {\tt josnei@dm.ufscar.br} \\\\

\begin{thebibliography}{99}
\bibitem{Kand1} V. Alexandru, N. Popescu and A. Zaharescu, \textit{A theorem of characterization of residual transcendental extensions of a valuation}, J. Math. Kyoto Univ. \textbf{28} (1988), 579--592.

\bibitem{wsj} J. Decaup, W. Mahboub and M. Spivakovsky, \textit{Abstract key polynomials and comparison theorems with the key polynomials of Mac Lane -- Vaquie}, preprint arXiv:1611.06392.

\bibitem{Kap} I. Kaplansky, \textit{Maximal fields with valuations I}, Duke Math. Journ. \textbf{9}
(1942), 303--321.

\bibitem{Kuhbadplaces} F.-V. Kuhlmann, \textit{Value groups, residue fields and bad places of rational function fields}, Trans. Amer. Math. Soc. \textbf{356} (2004), 4559--4600.

\bibitem{Mac} S. MacLane, \textit{A construction for absolute values in polynomial rings}, Trans. Amer. Math. Soc. \textbf{40} (1936), 363 -- 395.

\bibitem{g}  J. Novacoski, \textit{Key polynomials and minimal pairs}, J. Algebra \textbf{523} (2019), 1--14.

\bibitem{f} J. Novacoski and M. Spivakovsky, \textit{Key polynomials and pseudo-convergent sequences}, J. 
Algebra \textbf{495} (2018), 199--219.

\bibitem{Vaq} M. Vaqui\'e, \textit{Extension d'une valuation}, Trans. Amer. Math. Soc. \textbf{359} (2007), no. 7, 3439--3481.

\end{thebibliography}
\end{document}